\title{
\centerline{
\begin{minipage}{1.35\textwidth}
\centering
{\Large
Indistinguishability of Trees in Uniform Spanning Forests
}
\end{minipage}
}
}
\author{Tom Hutchcroft and Asaf Nachmias}
\par \textsc{School of Mathematical Sciences, Tel Aviv University} \par
\crefname{theorem}{Theorem}{Theorems}
\crefname{thm}{Theorem}{Theorems}
\crefname{lemma}{Lemma}{Lemmas}
\crefname{lem}{Lemma}{Lemmas}
\crefname{remark}{Remark}{Remarks}
\crefname{prop}{Proposition}{Propositions}
\crefname{defn}{Definition}{Definitions}
\crefname{corollary}{Corollary}{Corollaries}
\crefname{conjecture}{Conjecture}{Conjectures}
\crefname{question}{Question}{Questions}
\crefname{chapter}{Chapter}{Chapters}
\crefname{section}{Section}{Sections}
\crefname{figure}{Figure}{Figures}
\theoremstyle{plain}
\newtheorem{thm}{Theorem}[section]
\newtheorem{lemma}[thm]{Lemma}
\newtheorem{lem}[thm]{Lemma}
\newtheorem{corollary}[thm]{Corollary}
\newtheorem{prop}[thm]{Proposition}
\newtheorem{question}[thm]{Question}
\newtheorem*{question*}{Question}
\theoremstyle{definition}
\newtheorem{defn}[thm]{Definition}
\newtheorem{example}[thm]{Example}
\theoremstyle{remark}
\newtheorem*{remark}{Remark}
\numberwithin{equation}{section}
\renewcommand{\P}{\mathbb P}
\newcommand{\Z}{\mathbb Z}
\newcommand{\E}{\mathbb E}
\newcommand{\eps}{\varepsilon}
\newcommand{\URN}{unimodular random rooted network }
\newcommand{\URNs}{unimodular random rooted networks }
\newcommand{\ch}[1]{#1}
\newcommand{\ca}[1]{{#1}}
\newcommand{\asaf}[1]{{#1}}
\newcommand{\tom}[1]{{#1}}
\newcommand{\ct}[1]{{#1}}
\newcommand{\cha}[1]{#1}
\newcommand{\symdif}{\hspace{.1em}\triangle\hspace{.1em}}
\newcommand{\eqd} {\overset{d}{=}}
\newcommand{\be}{\begin{equation}}
\newcommand{\ee}{\end{equation}}
\newcommand{\F}{\mathfrak F}
\newcommand{\A}{\mathscr A}
\newcommand{\B}{\mathscr B}
\newcommand{\cG}{\mathcal G}
\newcommand{\sA}{\mathscr A}
\newcommand{\sB}{\mathscr B}
\newcommand{\sC}{\mathscr C}
\newcommand{\sP}{\mathscr P}
\newcommand{\sE}{\mathscr E}
\newcommand{\sG}{\mathscr G}
\newcommand{\core}{\operatorname{core}}
\newcommand{\past}{\operatorname{past}}
\newcommand{\WUSF}{\mathsf{WUSF}}
\newcommand{\FUSF}{\mathsf{FUSF}}
\newcommand{\UST}{\mathsf{UST}}
\newcommand{\Gg}{\mathcal{G}_{\bullet \bullet}}
\newcommand{\Gb}{\mathcal{G}_{\bullet}^{\{0,1\}}}
\begin{document}
\maketitle
\begin{abstract}
We prove that in both the free and the wired uniform spanning forest \ch{(FUSF and WUSF)} of any unimodular random rooted network (in particular, of any Cayley graph), it is  impossible to distinguish the connected components of the forest from each other by invariantly defined \cha{graph} properties \ch{almost surely}. This confirms a conjecture of Benjamini, Lyons, Peres and Schramm~\cite{BLPS}.

 We \tom{also} answer positively two additional questions of \cite{BLPS} under the assumption of unimodularity. We prove that on any unimodular random rooted network, the FUSF is either connected or has infinitely many connected components almost surely, and, if the FUSF and WUSF are distinct, then every component of the FUSF is transient and infinitely-ended almost surely. All of these results are new even for Cayley graphs. \end{abstract}

\section{Introduction}

The \textbf{Free Uniform Spanning Forest} (FUSF) and the \textbf{Wired Uniform Spanning Forest} (WUSF) of an infinite graph $G$ are defined as weak limits of the uniform spanning trees on large finite subgraphs of $G$, taken with either free or wired boundary conditions  respectively (see \cref{S:background} for details).
First studied by Pemantle \cite{Pem91}, the USFs are closely related many other areas of probability, including electrical networks \cite{Kirch1847,BurPe93}, \ct{Lawler's} loop-erased random walk \cite{Lawler80,Wilson96,BLPS}, sampling algorithms \cite{ProppWilson,Wilson96}, domino tiling \cite{Ken00}, the Abelian sandpile model \cite{JarRed08,JarWer14,MajDhar92}, \ct{the rotor-router model \cite{HLMPPW}}, and the Fortuin-Kasteleyn random cluster model \cite{GrimFKbook,Hagg95}. The USFs are also of interest in group theory, where the FUSFs of Cayley graphs are related to the $\ell^2$-Betti numbers \cite{Gab05,Lyons09} and to the fixed price problem of Gaboriau \cite{Gab10}, and have also been used to approach the Dixmier problem \cite{EpMo09}.

Although both USFs  are defined as limits of trees, they need not be connected. Indeed, a principal result of Pemantle \cite{Pem91} is that the FUSF and WUSF coincide on $\Z^d$ for all $d\geq \ca{1}$ and that
they are  connected \ct{almost surely (a.s.)} if and only if $d \leq 4$.
 A complete characterisation of the connectivity of the WUSF was given by
 Benjamini, Lyons, Peres and Schramm (henceforth referred to as BLPS) in their seminal work \cite{BLPS}, who
showed that the WUSF of a graph $G$ is connected a.s.~if and only if the traces of two simple random walks started at arbitrary vertices of $G$ a.s.~intersect.
This recovers Pemantle's result on $\Z^d$, and shows more generally that the WUSF of a Cayley graph is \ch{connected} a.s.~if and only if
\ch{the corresponding group has polynomial growth of degree at most~$4$}~\cite{HebSaCo93,LP:book}.


Besides connectivity, several other basic features of the WUSF are also understood rather firmly\tom{. This understanding mostly stems from Wilson's algorithm rooted at infinity, which allows the WUSF to be sampled by joining together loop-erased random walks \cite{Wilson96,LP:book}}. For example, \tom{other than connectivity, the simplest property of a forest  is the number of {\em ends} its components have.} Here, an infinite graph $G$ is said to be $k$-ended if, over all finite sets of vertices $W$, the subgraph induced by $V\setminus W$ has a maximum of $k$ infinite connected components. In particular, an infinite tree is one-ended if and only if it does not contain a simple bi-infinite path.  Following earlier work by Pemantle \cite{Pem91}, BLPS \cite{BLPS} proved that the number of components of the WUSF of any graph is non-random, that the  WUSF of any unimodular transitive graph (e.g., any Cayley graph) is either connected or has infinitely many components a.s., and that in both cases every component of the WUSF is one-ended a.s.\ unless the underlying graph is itself two-ended.  Morris \cite{Morris03} later proved that every component of the WUSF is recurrent a.s.~on any graph, confirming a conjecture of BLPS  \cite[Conjecture 15.1]{BLPS}, and several other classes of graphs have also been shown to have one-ended WUSF components \cite{LMS08,AL07,H15}.

\ch{Much less is known about the FUSF.} No characterisation of its connectivity is known, nor  is it known whether the number of components of the FUSF is non-random on \ch{an any} graph.
%
%
In \cite{BLPS} it is proved that if the FUSF and WUSF differ on a unimodular transitive graph, then a.s.~the FUSF has a transient tree with infinitely many ends, in contrast to the WUSF. However, it remained an open problem \cite[Question 15.8]{BLPS} to prove that, under the same hypotheses, \emph{every} connected component of the FUSF is transient and infinitely ended a.s.
\ch{
In light of this, it is natural to ask the following more general question:

\begin{question*} Let $G$ be a unimodular transitive graph. Can the components of the free uniform spanning forest of $G$ be very different from each other? \end{question*}

Questions of this form were first studied by Lyons and Schramm \cite{LS99} in the context \emph{insertion-tolerant}  automorphism-invariant random subgraphs. Their remarkable theorem asserts that in any such random subgraph (e.g. Bernoulli bond percolation or the Fortuin-Kasteleyn random cluster model) on a unimodular transitive graph, one cannot distinguish between the infinite connected components using automorphism-invariant graph properties. For example, all such components must have the same volume growth, spectral dimension, value of $p_c$ and so forth (see Section \ref{comppropexamples} for further examples). They also exhibited applications of indistinguishability to statements not of this form, including uniqueness monotonicity and connectivity decay.
Here, a random subgraph $\omega$ of a graph $G$ is insertion-tolerance if for every edge $e$ of $G$, the law of the subgraph $\omega \cup \{e\}$ formed by inserting $e$ into $\omega$ is absolutely continuous with respect to the law of $\omega$.
The  uniform spanning forests are clearly not insertion-tolerant, since the addition of an edge may close a cycle.}

BLPS conjectured \cite[Conjecture 15.9]{BLPS} that the components of both the WUSF and FUSF also \ch{exhibit this form of indistinguishability}.
%
%
In this paper we confirm this conjecture.
\begin{thm}[Indistinguishability of USF components]\label{mainthmsimple} Let $G$ be a unimodular transitive graph, and let $\F$ be a sample of either the free uniform spanning forest or the wired uniform spanning forest of $G$. Then for each automorphism-invariant Borel-measurable set $\sA$ of subgraphs of $G$, either every connected component of $\F$ is in $\A$ or every connected component of $\F$ is not in $\A$ almost surely.
\end{thm}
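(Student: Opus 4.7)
The plan is to adapt the Lyons--Schramm indistinguishability argument \cite{LS99} to the USF setting. Because the USFs are not insertion-tolerant, the first step is to develop a suitable replacement, which I will call \emph{update tolerance}. Concretely, for any oriented edge $e=(v,w)$ of $G$ and any sample $\F$ of the USF, define $U_e\F$ by adjoining $e$ and then deleting a specific edge of $\F$ incident to $v$ --- either the first edge on the path in $\F$ from $v$ to $w$ (if they lie in the same tree, as can happen for the FUSF) or else the first edge on the ray from $v$ toward infinity in the tree of $\F$ containing $v$. The key property to establish is that the pushforward of the USF law under $U_e$ is absolutely continuous with respect to the USF law, with Radon--Nikodym derivative controlled polynomially in $\deg(v)$ and $\deg(w)$. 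For the WUSF this should be verifiable directly from Wilson's algorithm rooted at infinity, by coupling two runs of the algorithm that differ only in the first step of the walk from $v$; for the FUSF one will need to approximate by finite exhaustions and use the BLPS comparison with the WUSF.

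Equipped with update tolerance, the argument proceeds along Lyons--Schramm lines. Assume for contradiction that $\sA$ is an invariant Borel set of subgraphs and that, with positive probability, $\F$ contains both a component in $\sA$ and a component not in $\sA$. By unimodularity and mass transport, both kinds of components occur at positive density. Condition on the root $o$ lying in an $\sA$-component, and use the mass transport principle to locate, for any $R$, a vertex $u$ at graph distance at least $R$ from $o$ whose component is not in $\sA$. Apply the update $U_{e_i}$ along a sequence of edges $e_1,\ldots,e_k$ tracing a path from $o$ to $u$ in $G$. Since each update only modifies $\F$ locally around the corresponding edge, the resulting forest $\F'$ will, with probability bounded below uniformly in $R$, agree with $\F$ on large balls around $o$ and around $u$; yet in $\F'$ the vertices $o$ and $u$ lie in a common tree $T'$. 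By absolute continuity of the law of $\F'$ with respect to that of $\F$, any event that holds almost surely for $\F$ also holds almost surely for $\F'$, so $T'$ must lie either in $\sA$ or outside of $\sA$ almost surely on this event.

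The heart of the argument will then be to derive a contradiction by showing that the ``type'' of $T'$ is simultaneously determined by the large-radius geometry of $\F$ near $o$ (which says $T'\in\sA$) and by the large-radius geometry of $\F$ near $u$ (which says $T'\notin\sA$). This requires a ``pivotal approximation'' step: proving that whether the component of a vertex lies in a given invariant set $\sA$ can, on an event of probability close to one, be decided from data in a ball of bounded radius around that vertex. This approximation, together with the invariance of $\sA$ under automorphisms of $G$, should collapse the two options into a genuine contradiction. The main obstacles I expect are (i)~establishing update tolerance for the FUSF with sufficiently quantitative control, since no direct sampling algorithm is available, and (ii)~making the pivotal approximation work despite the fact that USF components are typically one-ended trees whose global geometry can in principle depend on far-away edges; handling this will likely require careful use of unimodularity together with the tree structure and Wilson's algorithm.
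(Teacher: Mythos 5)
Your plan correctly identifies update-tolerance as the needed replacement for insertion-tolerance, and the basic shape of the update ($U_e\F = \F \cup \{e\} \setminus d$ for a suitable edge $d$ incident to $e^-$) is essentially right. The Radon--Nikodym bound $c(e)/c(e^-)$ is indeed what one gets. But two major ideas are missing, and one step of your outline is incorrect as stated.

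First, and most seriously, the outline implicitly assumes that coexistence of $\sA$-components and $\neg\sA$-components yields \emph{pivotal} edges: edges whose update actually flips the type of some component. For the FUSF when $\FUSF\neq\WUSF$, this can be made to work (after first showing that every FUSF component is transient and infinitely-ended, which you will need for the final transience step and which is not mentioned in your plan). But for the WUSF, and for the FUSF when the two coincide, this is simply false. If $\sA$ is a \emph{tail} property --- one insensitive to any finite modification of the component and of the configuration --- then no update at all can change the type of any component, so pivotal edges need not exist even when both types occur. The paper must therefore split component properties into tail and non-tail and run two genuinely different arguments: for non-tail properties one can extract pivotal edges and run a Lyons--Schramm-style argument with update-tolerance, but for tail properties of the WUSF one needs a completely separate proof via Wilson's algorithm rooted at infinity and the spatial Markov property, exploiting one-endedness of the components. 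Your proposal does not address the tail case at all, and no amount of ``pivotal approximation'' will resolve it, because there may be no pivotals.

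Second, the proposed contradiction mechanism is not correct as written. You claim that after updating along a path from $o$ to $u$, the resulting $\F'$ will ``agree with $\F$ on large balls around $o$ and around $u$.'' This cannot hold: the updates along a path from $o$ to $u$ necessarily alter the forest on and near that path, which begins at $o$ and ends at $u$. In the actual argument, the measurable approximation of $\sA$ by a cylinder event $\sA'$ supported on a ball $B(\rho,R)$ is used differently: one performs an update at an edge $e_n$ chosen \emph{outside} $B(\rho,R)$ and \emph{near the position $X_n$ of a random walk that has escaped to distance $> R$}, exploiting transience. The cylinder approximation is an $L^1$ approximation, not a pathwise statement that the type is determined by a ball, and the contradiction comes from the stationarity of the rooted sequence $\langle(G,X_n,\F)\rangle$ together with the fact that a far-away pivotal update changes $(G,\rho,\F)\in\sA$ but not $(G,\rho,\F)\in\sA'$. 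Your ``pivotal approximation'' step, phrased as the type being decidable from a bounded ball with high probability, is not a true statement and is not what is needed. Finally, one should be aware that the FUSF update is only defined distributionally (via a weak limit of directions $D(T_n,e)$ in finite exhaustions), not as a deterministic function of the configuration, and a Wilson's-algorithm coupling of the form you sketch does not directly give update-tolerance for the WUSF; the paper establishes it through a detailed-balance calculation on a spanning-tree Markov chain.
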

As indicated by the above discussion, \cref{mainthmsimple} implies the following positive answer to \cite[Question 15.8]{BLPS} under the assumption of unimodularity.
\begin{thm}[Transient trees in the FUSF]\label{T:everyFUSFtreeistransientsimple} Let $G$ be a unimodular transitive graph and let $\F$ be a sample of $\FUSF_G$. If the measures $\FUSF_G$ and $\WUSF_G$ are distinct, then every component of $\F$ is transient and has infinitely many ends almost surely. \end{thm}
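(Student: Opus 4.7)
The plan is to deduce \cref{T:everyFUSFtreeistransientsimple} as an essentially immediate corollary of the indistinguishability result \cref{mainthmsimple}, combined with the BLPS existence theorem recalled in the introduction: under the hypothesis $\FUSF_G \neq \WUSF_G$ on a unimodular transitive graph $G$, the FUSF a.s.\ contains at least one component that is both transient and has infinitely many ends.

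First I would introduce the two sets of subgraphs of $G$
\[
\sA_{\mathrm{tr}} := \{H \subseteq G : H \text{ is transient}\}, \qquad \sA_{\infty} := \{H \subseteq G : H \text{ has infinitely many ends}\},
\]
and verify that both are automorphism-invariant and Borel-measurable. Automorphism-invariance is clear because each condition depends only on the isomorphism class of $H$ as an abstract graph. For Borel-measurability, transience of $H$ can be expressed as the statement that the effective resistance from a fixed vertex to the complements of balls of radius $n$ in $H$ remains bounded as $n\to\infty$, which is a countable combination of cylinder events on the edge set. Similarly, $H$ has infinitely many ends iff for every $k$ there exists a finite set $W$ of vertices of $H$ such that $H\setminus W$ has at least $k$ infinite connected components, which is again a countable combination of cylinder events.

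Next I would apply \cref{mainthmsimple} with $\F$ a sample of $\FUSF_G$ and $\sA$ equal to $\sA_{\mathrm{tr}}$ and $\sA_{\infty}$ in turn. The theorem yields that almost surely either every component of $\F$ lies in $\sA_{\mathrm{tr}}$ or no component does, and similarly for $\sA_{\infty}$. The BLPS existence result rules out the ``no component'' case in each dichotomy under the hypothesis $\FUSF_G \neq \WUSF_G$, so every component of $\F$ is both transient and infinitely-ended almost surely, which is the statement of \cref{T:everyFUSFtreeistransientsimple}.

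There is no serious obstacle in this deduction: once the indistinguishability \cref{mainthmsimple} is granted, the entire content of the difficulty is absorbed into its proof. The only care required is in checking measurability and automorphism-invariance of the two graph properties above, and the representations just given are sufficient for that.
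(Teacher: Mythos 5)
Your deduction is logically clean \emph{if} one is allowed to take \cref{mainthmsimple} as already established, and your measurability and automorphism-invariance checks for the two properties $\sA_{\mathrm{tr}}$ and $\sA_{\infty}$ are correct. The BLPS fact you cite (that at least one FUSF component is transient and infinitely-ended when $\FUSF_G\neq\WUSF_G$ on a unimodular transitive graph) is indeed \cite[Theorem 10.9]{BLPS}, so the two-step ``one-or-all'' argument would go through.

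The problem is circularity. In this paper the logical order is the opposite of what you propose: \cref{T:everyFUSFtreeistransientsimple} is proved first, directly, precisely because the paper's proof of \cref{mainthmsimple} (via \cref{mainthmfusf}) \emph{uses} the fact that every FUSF component is a.s.\ transient and infinitely-ended. It is invoked twice in that proof: once to guarantee that every component has nonempty core (so that pivotal edges in the sense of \cref{pivotaltype} exist), and once at the very end, via \cref{nobranches}, to show that two independent random walks stay in distinct transient branches of $T_\F(\rho)\setminus\rho$ with positive probability, making the pivotal event have positive probability. The introduction flags this explicitly: ``rather than deducing \cref{T:everyFUSFtreeistransientsimple} from \cref{mainthmsimple}, we instead prove \cref{T:everyFUSFtreeistransientsimple} directly and apply it in the proof of \cref{mainthmsimple}.'' So your proposal, while correct as a pure implication, cannot serve as the proof here without first supplying an independent proof of \cref{mainthmsimple} that does not presuppose the transience of all components.

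For reference, the paper's direct argument runs as follows. After reducing to the ergodic case, one uses \cite[Propositions 4.9--4.10 and Theorem 6.2]{AL07} to relate transience, infinitely many ends, and expected degree for unimodular random rooted trees: an infinite unimodular random rooted tree is transient and infinitely-ended with positive probability iff its expected degree exceeds $2$. Since $\FUSF_G$ stochastically dominates $\WUSF_G$, which always has expected degree exactly $2$, the hypothesis $\FUSF_G\neq\WUSF_G$ forces $\E[\deg_\F(\rho)]>2$, and a truncation argument then gives a transient, infinitely-ended component with positive (hence, by ergodicity, full) probability. If some component were nonetheless recurrent with positive probability, the update-tolerance-based \cref{coexistimpliesbranch} would produce, with positive probability, a transient component with a recurrent branch; but \cref{nobranches} rules this out via a mass-transport argument. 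This is genuinely different from your route: it never touches indistinguishability, and instead combines update tolerance, mass transport, and expected-degree considerations.
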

However, rather than deducing \cref{T:everyFUSFtreeistransientsimple} from \cref{mainthmsimple}, we instead prove \cref{T:everyFUSFtreeistransientsimple} directly and apply it in the proof of \cref{mainthmsimple}.

\medskip
\ch{We also apply \cref{mainthmsimple} to answer another of the most basic open problems about the FUSF \cite[Question 15.6]{BLPS} under the assumption of unimodularity.}
\begin{thm}[Number of trees in the FUSF]\label{1inftycomponentssimple} Let $G$ be a unimodular transitive graph  and let $\F$ be a sample of the free uniform spanning forest of $G$. Then $\F$ is either connected or has infinitely many components almost surely. \end{thm}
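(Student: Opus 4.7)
The plan is to argue by contradiction. Assume that the number of components $N$ of $\F$ satisfies $N=k$ almost surely for some $2\leq k<\infty$; the reduction to this constant case uses the ergodicity of the FUSF on a unimodular transitive graph, a standard consequence of the mass transport principle as developed in \cite{BLPS}.

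I split into two cases. If $\FUSF_G=\WUSF_G$, then $\F$ has the law of the WUSF, and the BLPS dichotomy for the WUSF on unimodular transitive graphs forces $N\in\{1,\infty\}$ almost surely, immediately contradicting $2\leq k<\infty$. Otherwise $\FUSF_G\neq\WUSF_G$, and \cref{T:everyFUSFtreeistransientsimple} applies to yield that every component of $\F$ is transient and has infinitely many ends almost surely.

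In this remaining case, the plan is to apply \cref{mainthmsimple} by exhibiting an automorphism-invariant Borel class $\sA$ of subgraphs of $G$ such that some but not all components of $\F$ lie in $\sA$, directly contradicting the all-or-nothing conclusion of indistinguishability. A natural candidate exploits the finiteness of $k$: for any component $T$ of $\F$, the $G$-complement $V(G)\setminus V(T)$ is the disjoint union of the other $k-1$ trees together with any $G$-edges between them, so it has at most $k-1$ infinite connected components in $G$. The precise value of this number, refined by the features supplied by the transience and infinite-endedness of each component, is a natural invariant of $T$ as a subgraph of $G$.

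The main obstacle is producing the concrete $\sA$. Indistinguishability and unimodularity alone do not forbid finitely many indistinguishable infinite components for general invariant random subgraphs (as in randomly shifted cuts of $\Z^d$), so the argument must genuinely leverage the FUSF-specific structure beyond the dichotomy conclusion of \cref{T:everyFUSFtreeistransientsimple}. I expect the argument to combine the transient, infinitely-ended structure provided by \cref{T:everyFUSFtreeistransientsimple} with a mass-transport or Wilson's algorithm calculation that propagates the co-component structure through $G$ into a well-defined component-level invariant, furnishing the distinguishing $\sA$ and hence the desired contradiction.
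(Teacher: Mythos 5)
Your reduction to the case $2\le k<\infty$ with $\FUSF_G\neq\WUSF_G$ is correct, and so is the observation that the $\FUSF_G=\WUSF_G$ case is already settled by the BLPS dichotomy for the WUSF. But the remaining step — ``exhibit an automorphism-invariant $\sA$ such that some but not all components lie in $\sA$, contradicting indistinguishability'' — is logically backwards and cannot be completed. \cref{mainthmsimple} asserts precisely that \emph{no} such distinguishing $\sA$ exists for the FUSF, so any correct proof of \cref{1inftycomponentssimple} that uses indistinguishability must do so as a \emph{positive} input (``all components look the same''), not by producing a distinguishing property. Your proposed invariant (the number of infinite complementary pieces, decorated by transience/end data) will take the same value on every component by the very theorem you are trying to invoke, so it cannot separate them. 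You sense this obstruction yourself in the penultimate paragraph, but the proposal stops exactly where the real argument needs to start.

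The paper's route is different and is worth internalising. Define, for a component $T$ of $\F$, the \emph{frequency} $\mathrm{Freq}(T)$ as the a.s.\ limiting fraction of time a simple random walk on $G$ (not on $T$) spends in $T$ (\cref{L:frequencies}); this is a component property. If there are exactly $k$ components, the frequencies sum to $1$, and \cref{mainthmfusf} (indistinguishability, used positively) forces every component to have frequency exactly $1/k>0$. Then one applies \emph{update-tolerance} (\cref{updatetol}): a finite sequence of updates along a path between two components produces a new forest $\F_m$, still absolutely continuous with respect to $\FUSF_G$, in which a core vertex of one original tree has been disconnected into two infinite pieces $K_1,K_2$, with one of them attached to a second tree. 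Equality of frequencies then forces $\mathrm{Freq}(K_1)=0$. But \cref{L:nozerofrequencybranches} — a separate mass-transport argument using $\FUSF_G\neq\WUSF_G$ (through $\E[\deg_\F\rho]>2$) — shows that every infinite branch of a positive-frequency component must itself have positive frequency. This is the contradiction. The two ingredients you are missing, and which cannot be replaced by ``finding a distinguishing $\sA$,'' are exactly update-tolerance (to manufacture the problematic branch while staying in the support of $\FUSF_G$) and the no-zero-frequency-branch lemma (to rule it out).
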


\ch{The derivation of \cref{1inftycomponentssimple} from \cref{mainthmsimple}} is inspired by the proof of \cite[Theorem 4.1]{LS99}, and also establishes the following result.

\begin{thm}[Connectivity decay in the FUSF]\label{T:Longrangedisordersimple} Let $G$ be a unimodular transitive graph and let $\F$ be a sample of the free uniform spanning forest of $G$. If $\F$ is disconnected a.s., then for every vertex $v$ of $G$,
\[\inf\left\{\FUSF_G(u\in T_\F(v)) : u \in V(G)\right\}=0 \, , \]
\ch{where $u\in T_\F(v)$ is the event that $u$ belongs to the component of $v$ in $\F$.}
\end{thm}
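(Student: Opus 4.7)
The plan is to prove \cref{T:Longrangedisordersimple} by contradiction, following the spirit of the proof of \cite[Theorem 4.1]{LS99} but using \cref{mainthmsimple} in place of insertion tolerance. Assume that $\F$ is disconnected a.s., and suppose for contradiction that there exist a vertex $v_0 \in V(G)$ and a constant $c > 0$ such that $\FUSF_G(u \in T_\F(v_0)) \geq c$ for every $u \in V(G)$. Transitivity of $G$ upgrades this to $p(u,v) := \FUSF_G(u \in T_\F(v)) \geq c$ for every pair $u, v \in V(G)$, and since \cref{1inftycomponentssimple} is already available, $\F$ has infinitely many components almost surely.

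The first step is to show that the component of every vertex $v$ has positive upper density near $v$ with positive probability. For every $v \in V(G)$ and $R \geq 1$,
\[
\E \left[ \frac{|T_\F(v) \cap B(v,R)|}{|B(v,R)|} \right] \;=\; \frac{1}{|B(v,R)|} \sum_{u \in B(v,R)} p(u,v) \;\geq\; c,
\]
and since this random variable is bounded by $1$, the reverse Fatou lemma gives $\E[\limsup_{R \to \infty} |T_\F(v) \cap B(v,R)|/|B(v,R)|] \geq c$. Markov's inequality then yields
\[
\FUSF_G\!\left(\limsup_{R \to \infty} \frac{|T_\F(v) \cap B(v,R)|}{|B(v,R)|} \geq c/2\right) \;\geq\; c/2.
\]

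Next I would promote this density property to every component of $\F$ via indistinguishability. Consider the automorphism-invariant Borel-measurable set
\[
\sA \;:=\; \Bigl\{ T \subseteq G \;:\; \limsup_{R \to \infty} \frac{|V(T) \cap B(w,R)|}{|B(w,R)|} \geq c/2 \text{ for some } w \in V(G) \Bigr\};
\]
the previous step gives $\FUSF_G(T_\F(v) \in \sA) > 0$, and so by \cref{mainthmsimple} every component of $\F$ lies in $\sA$ almost surely.

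The hard step is converting ``every component has upper density $\geq c/2$ at some vertex'' into a contradiction with $\F$ having infinitely many components, since different components may peak in density near disjoint regions of $G$ and so the pointwise identity $\sum_P |P \cap B(w,R)|/|B(w,R)| = 1$ cannot be applied directly. To close the argument I would leverage the uniformity of the lower bound $p(u,v) \geq c$: the same calculation as above in fact shows that $\E[|T_\F(v) \cap B(w,R)|/|B(w,R)|] \geq c$ for \emph{every} pair $v, w \in V(G)$, so that the random set $H_\F(v) := \{w \in V(G) : \limsup_R |T_\F(v) \cap B(w,R)|/|B(w,R)| \geq c/2\}$ itself has positive upper density in $V(G)$ with positive probability. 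Applying \cref{mainthmsimple} once more, this time to the strengthened invariant property ``$\{w : T \text{ has limsup density at least } c/2 \text{ around } w\}$ has positive upper density in $V(G)$'', forces every component of $\F$ to possess a positive-density set of such witness vertices. Combined with a mass-transport / counting argument based on the pointwise bound $\sum_P \liminf_R |P \cap B(w,R)|/|B(w,R)| \leq 1$ that follows from Fatou's lemma, this caps the number of components at a finite quantity of order $1/c^2$, contradicting the infinitude of components and completing the proof.
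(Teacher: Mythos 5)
Your high-level structure---establish a density invariant for one component, propagate it to all components via indistinguishability, and obtain a contradiction with \cref{1inftycomponentssimple}---has the same shape as the paper's proof, which is likewise modelled on \cite[Theorem~4.1]{LS99}. However, you replace the key measuring device (random-walk frequencies) with ball densities, and this substitution opens a gap that your sketch does not close.

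The problem is that ball densities $|T\cap B(w,R)|/|B(w,R)|$ need not converge as $R\to\infty$, and for a nonamenable transitive graph there is no reason to expect them to. The reverse Fatou step correctly gives you a lower bound on the $\limsup$, but the quantity that is automatically subadditive across components, $\sum_{P}\liminf_R |P\cap B(w,R)|/|B(w,R)|\leq 1$, involves the $\liminf$. These two can be far apart: for infinitely many components $T_1,T_2,\ldots$ and interleaved scales $R_1<R_2<\cdots$, one can have $T_i$ dominate $B(w,R_i)$, so that \emph{every} component has $\limsup$ density close to $1$ at $w$ while simultaneously every component has $\liminf$ density $0$ at $w$. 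The Fatou bound therefore places no cap on the number of components satisfying your property $\sA$, and the claimed ``order $1/c^2$'' count does not follow. Passing to the strengthened invariant ``positive-upper-density set of witness vertices'' does not repair this, since at each witness the $\limsup$ is attained along a subsequence of radii that may vary with the component. In short, you are caught between a $\limsup$ you can bound below but cannot make additive, and a $\liminf$ that is additive but that you cannot bound below (plain Fatou goes the wrong way for the $\liminf$).

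The paper resolves exactly this issue by replacing ball densities with random-walk frequencies. \cref{L:frequencies} shows that $\frac{1}{N}\sum_{n=1}^{N}\mathbbm{1}(X_n\in T)$ converges $P_v$-a.s.\ to a genuine limit $\mathrm{Freq}(G,\rho,T)$---not merely a $\limsup$---and the proof of that lemma requires real work (reversibility, the subadditive ergodic theorem, and a uniform-Cauchy argument). Because the frequencies are honest limits they sum to at most $1$ across components, indistinguishability forces them all to be equal, infinitude of components forces them all to be zero, and zero frequency yields $\P(X_n\in T_\F(\rho))\to 0$ along a subsequence, which is precisely the statement of \cref{T:Longrangedisorder}. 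If you want to keep a density-style argument, you would first need to prove a convergence theorem for your ball densities analogous to \cref{L:frequencies}, and this is false in general for nonamenable transitive graphs---the averaging over the random walk ensemble (rather than over balls) is the essential missing ingredient.
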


We prove all of our results in the much more general setting of \emph{unimodular random rooted networks}, which includes all Cayley graphs as well as a wide range of popular infinite random graphs and networks \cite{AL07}. \ct{For example, our results hold when the underlying graph is an infinite supercritical percolation cluster in a Cayley graph,  a hyperbolic unimodular random triangulation \cite{PSHIT, BPP14} \ct{(for which the FUSF and WUSF are shown to be distinct in the upcoming work \cite{unimodular2})}, a \ct{supercritical} Galton-Watson tree, or even a component of the \ct{FUSF of another unimodular random rooted network.}}
See \cref{S:compprop} for the strongest \ch{and most general} statements.

\medskip

\ca{
\noindent{\bf Organization.} In \cref{S:about} we describe our approach and the novel ingredients of our proof. The necessary background, including definitions of USFs and \URNs are presented in \cref{S:background}. In \cref{S:compprop} we define the graph properties we will work with, state the most general and strongest versions of our theorems (most importantly, \cref{mainthm}), and provide several illustrative examples.
In \cref{S:FUSFindist} we develop the update-tolerance property of the FUSF, and prove\ct{, in the setting of \cref{mainthm}, that if the FUSF and WUSF are distinct} then every component of the FUSF is transient and infinitely-ended (\cref{T:everyFUSFtreeistransient}), and then prove indistinguishability of the components in this case. In \cref{S:1infty}, still in the case where the FUSF and WUSF are distinct, we prove that the FUSF \ct{is either connected or has infinitely many connected components} (\cref{1inftycomponents}) and in the latter case we show that connectivity decay is exhibited (\cref{T:Longrangedisorder}). In \cref{S:wusfindist} we show that the WUSF components are indistinguishable, completing the proof of \cref{mainthm}.
}

\medskip

\noindent {\bf Remark.} After  this paper was posted on the arXiv, Adam Tim\'ar posted independent work \cite{Timar15} in which he proves \cref{mainthmsimple} for the FUSF only in the case that $\FUSF\neq\WUSF$,  and also proves Theorems \ref{T:everyFUSFtreeistransientsimple} and \ref{1inftycomponentssimple}; Indistinguishability of components in the WUSF is not treated. In the present paper, we prove \cite[Conjecture 15.1]{BLPS} in its entirety for both the FUSF and WUSF.

\subsection{About the proof}\label{S:about}

In \cite{LS99}, Lyons and Schramm argue that the coexistence of clusters of different types \ca{in an invariant edge percolation} implies the existence of infinitely many \emph{pivotal edges}, that is, closed edges that change the type of an infinite cluster if they are inserted. \ca{When the percolation is insertion-tolerant,} this heuristically contradicts the Borel-measurability of the property, as the existence of pivotal edges far away from the origin should imply that we cannot approximate the event that the cluster has the property by a cylinder event. This argument was made precise in~\cite{LS99}. \ct{Unimodularity of the underlying graph was used heavily  -- indeed, indistinguishability can fail without it \cite[Remark 3.16]{LS99}.}

A crucial ingredient of our proof is an \emph{update-tolerance} property of USFs. This property was introduced for the WUSF by the first author \cite{H15} and is developed for the FUSF in \cref{S:cyclebreakingintheFUSF}. This property allows us to make a local modification to a sample of the FUSF or WUSF in such a way that the law of the resulting modified forest is absolutely continuous with respect to the law of the forest that we started with. \ca{In this local modification, we add an edge of our choice to the USF and, in exchange, are required to remove an edge emanating from the same vertex.
 The edge that we are required to remove is random and depends upon both the edge we wish to insert and on
 the entire sample of the USF.}

Update-tolerance replaces insertion-tolerance and allows us to perform a variant of the key argument in \cite{LS99}. However, several obstacles arise as we are required to erase an edge at the same time as inserting one. In particular, we cannot simply open a closed edge connecting two clusters of different types in order to form a single cluster.
These obstacles are particularly severe for the WUSF (and the FUSF in the case that the two coincide), where it is no longer the case that the coexistance of components of different types implies the existence of pivotal edges.  To proceed, we separate the component properties into two types, tail and non-tail, according to whether the property is sensitive to finite modifications of the component. Indistinguishability is then proven by a different argument in each case: non-tail properties are handled by a variant of the Lyons-Schramm method, while tail properties are handled by a completely separate argument utilising Wilson's algorithm \cite{Wilson96} and the spatial Markov property. The proof that components of the WUSF cannot be distinguished by tail properties also applies to transitive graphs without the assumption of unimodularity.

\subsection{Background and Definitions}\label{S:background}


\subsubsection{Notation}

A {\bf tree} is a connected graph with no cycles. A {\bf spanning tree} of a graph $G=(V,E)$ is a connected subgraph of $G$ that contains every vertex and no cycles. A {\bf forest} is a graph with no cycles, and a {\bf spanning forest} of a graph $G=(V,E)$ is a subgraph of $G$ that contains every vertex and no cycles. Given a forest $\F$ and a vertex $v$ we write $T_\F(v)$ for the connected component of $\F$ containing $v$. An {\bf essential} spanning forest is a spanning forest such that every component is infinite. A {\bf branch} of an infinite tree $T$ is an infinite component of $T \setminus v$ for some vertex $v$. The \textbf{core} of an infinite tree $T$, denoted $\core(T)$, is the set of vertices of $T$ such that $T\setminus v$ at least two infinite connected components.

Recall that an infinite graph $G$ is said to be \textbf{$k$-ended} if removing a finite set of vertices $W$ from $G$ results in a maximum of $k$ distinct infinite connected components. In particular, an infinite tree is one-ended if and only if it does not contain any simple bi-infinite paths. We say that a forest $\F$ is one-ended if all of its components are one-ended. The {\bf past} of a vertex $v$ in a one-ended forest, denoted $\past_\F(v)$, is the union of \asaf{$v$ and the} finite components of $\F \setminus v$. The \textbf{future} of the vertex $v$ is the set of $u$ such that $v\in \past_\F(u)$.

We write $B_G(v,r)$ for the graph-distance ball of radius $r$ about a vertex $v$ in a graph $G$.
\subsubsection{Uniform Spanning \ca{Trees and} Forests}\label{S:USFbackground}

\ca{We now briefly provide the necessary definitions, notation and background concerning USFs. We refer the reader to \cite[\S4 and \S10]{LP:book} for a comprehensive review of this theory.}
\ca{Given a graph $G=(V,E)$ we will refer to an edge $e\in E$ both as an oriented and unoriented edge and it will always be clear which one from the context. Most frequently we will deal with oriented edges and in this case we orient them from their tail $e^-$ to their head $e^+$.}
%

A \textbf{network} $(G,c)$ is a locally finite, connected multi-graph $G=(V,E)$ together with a function $c:E\to(0,\infty)$ assigning a positive \textbf{conductance} to each edge of $G$. Graphs are considered to be networks by setting $c \equiv 1$. The distinction between graphs and networks does not play much of a role for us, and we will mostly suppress the notation of conductances, writing $G$ to mean either a graph or a network.
Write $c(u)$ for the sum of the conductances of the edges $e^-=u$ emanating from $u$ and $c(u,v)$ for the \ca{conductance of the sum of the conductances of the (possibly many) edges with endpoints $u$ and $v$.} The \textbf{random walk} $\langle X_n \rangle_{n\geq 0}$ on a network $G$ is the Markov chain on $V$ with transition probabilities $p(u,v) = c(u,v)/c(u)$.

 The \textbf{uniform spanning tree} measure $\UST_G$ of a finite \ca{connected} graph $G$ is the uniform measure on spanning trees of $G$ (considered for measure-theoretic purposes as functions $E\to\{0,1\}$).
   When $G$ is a network, $\UST_G$ is the probability measure on spanning trees of $G$ such that the probability of a tree is proportional to the product of the conductances of its edges.

Let $G$ be an infinite network. An \textbf{exhaustion }$\langle V_n\rangle_{n\geq0}$ of $G$ is an increasing sequence of finite sets of vertices $V_n\subset V$ such that $\bigcup_{n\geq0} V_n =V$. Given such an exhaustion, we define $G_n$ to be the subgraph of $G$ induced by $V_n$ together with the conductances inherited from $G$, and define  $G_n^*$ to be the network obtained from $G$ by identifying (or ``wiring'') $V\setminus V_n$ into a single vertex and deleting all the self-loops that are created.
The weak limits of the measures $\UST_{G_n}$ and $\UST_{G_n^*}$ exist for any network and do not depend on the choice of exhaustion \cite{Pem91,Hagg95}. The limit of the $\UST_{G_n}$ is called the \textbf{free uniform spanning forest} measure $\FUSF_G$ while the limit of the $\UST_{G_n^*}$ is called the \textbf{wired uniform spanning forest} measure $\WUSF_G$. Both limits are clearly concentrated on the set of essential spanning forests of $G$.

\ct{The measures $\FUSF_G$ and $\WUSF_G$ coincide if and only if $G$ does not support any non-constant harmonic functions of finite Dirichlet energy \cite{BLPS}, and in particular the two measures coincide when $G=\Z^d$. The two measures also coincide on every amenable transitive graph \cite[Corollary 10.9]{BLPS}, and an analogous statement holds for unimodular random rooted networks once an appropriate notion of amenability is adopted \cite[\S8]{AL07}.
When $G$ is a Cayley graph, the two measures $\FUSF_G$ and $\WUSF_G$ coincide if and only if the \emph{first $\ell^2$-Betti number} of the corresponding group is zero \cite{Lyons09}. \ca{By taking various free or direct products of groups and estimating their Betti numbers, this characterization allows to construct an abundance of Cayley graphs in which the two measures either coincide or differ} \cite[\S10.2]{LP:book}.}

\medskip

\ca{A very useful property of the \ct{UST and the} USFs is \ct{the} {\em spatial Markov property}.} Let $G$ be a network and let $H$ and $F$ be finite subsets of $G$. We write $\hat G = (G-H)/F$ for the network formed from $G$ by deleting each edge $h \in H$ and contracting (i.e., identifying the two endpoints of) each edge $f \in F$.
If $G$ is finite and $T$ is a sample of $\UST_G$, then the law of $T$ conditioned on the event $\{F \subseteq T, H\cap T=\emptyset\}$ (assuming this event has positive probability) is equal to the law of the union of $F$ with an independent copy of $\UST_{\hat G}$, considered as a subgraph of $G$ \cite[\S4]{LP:book}.
Now suppose that $G$ is an infinite network with exhaustion $\langle V_n \rangle_{n\geq0}$ and let $\F$ be a sample of either $\FUSF_G$ or $\WUSF_G$. Applying the Markov property to the finite networks $G_n$ and $G_n^*$ and taking the limit as $n\to\infty$, we see similarly that the conditional distribution of $\F$ conditioned on the event $\{F \subseteq \F, H\cap \F=\emptyset\}$ is equal to the law of the union of $F$ with an independent copy of $\FUSF_{\hat G}$ or $\WUSF_{\hat G}$ as appropriate. It is important here that $H$ and $F$ are finite.

\medskip

\ca{Lastly, throughout \cref{S:wusfindist} we will use a recent result of the first author regarding ends of the WUSF's components.}
%
Components of the WUSF are known to be one-ended a.s.~in several large classes of graphs and networks. The following is proven by the first author in \cite{H15}, and follows earlier works \cite{AL07, BLPS, LMS08, Pem91}.
\begin{thm} [\cite{H15}] \label{oneend} Let $(G,\rho)$ be transient unimodular random rooted network with $\E [c(\rho)] < \infty$. Then every component of the wired uniform spanning forest of $G$ is one-ended almost surely.
\end{thm}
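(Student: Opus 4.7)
The plan is to sample the WUSF using Wilson's algorithm rooted at infinity and then argue by a mass transport contradiction. Since $G$ is transient almost surely, Wilson's algorithm rooted at infinity is well-defined: enumerate the vertices and iteratively loop-erase independent random walks until they hit the tree built so far; the output is a sample of $\WUSF_G$. This construction equips each component $T$ of $\F$ with a canonical orientation towards infinity in which every vertex $v$ has exactly one outgoing oriented edge, with the unique infinite oriented path $\gamma_v$ from $v$ being the loop-erasure of the walk from $v$.

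The first step is the observation that, under this orientation, a component $T$ is one-ended if and only if the past $\past_T(v) := \{u : \gamma_u \ni v\}$ of every $v \in T$ is finite. If $T$ has at least two ends then its core is non-empty, and every vertex $v \in \core(T)$ has at least one full branch of the core contained in $\past_T(v)$, so $|\past_T(v)| = \infty$; conversely, if $T$ is one-ended then $\past_T(v)$ is just the union of $v$ with the finite components of $T \setminus v$. By the mass transport principle applied to the unimodular random rooted network $(G,\rho)$, it therefore suffices to show that $|\past_\F(\rho)| < \infty$ almost surely.

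The heart of the argument is then a mass transport estimate. Applying the mass transport principle to the transport $F(u,v) = c(u)\,\mathbf{1}[v \in \past_\F(u)]$ gives the identity
\[ \E\bigl[c(\rho)\,|\past_\F(\rho)|\bigr] = \E\Bigl[\sum_{u \in \past_\F(\rho)} c(u)\Bigr]. \]
Using the stacks-of-arrows realisation of Wilson's algorithm, the event $\{u \in \past_\F(\rho)\}$ can be described directly as the event that the loop-erased walk from $u$ passes through $\rho$, and standard identities for loop-erased walks on transient networks, combined with network reversibility, bound $\P(u \in \past_\F(\rho))$ pointwise in terms of hitting probabilities and Green's functions. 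This should reduce the finiteness of the expected past size to the assumed integrability $\E[c(\rho)] < \infty$.

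The main obstacle is executing the Wilson-stacks analysis in the random-environment setting: while the relevant identities between loop-erased walks, Green's functions, and pasts are classical on any fixed transient network, here the network itself is random, and the transport must be carried out jointly over the network and the forest. I would handle this by first fixing a realisation of the network, establishing the pointwise estimate on $\P(u \in \past_\F(\rho))$ via the stacks representation, and only invoking the mass transport principle and the integrability $\E[c(\rho)] < \infty$ at the final step. The hypothesis $\E[c(\rho)] < \infty$ is precisely what is needed to turn the pointwise Green's function bound into a global finiteness statement under the unimodular measure, thereby forcing $|\past_\F(\rho)| < \infty$ almost surely.
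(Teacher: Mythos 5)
Your reduction to showing that $\past_\F(\rho)$ is almost surely finite is correct and standard. The gap is in the mass transport you then write down. First, there is a past/future confusion: with $F(u,v)=c(u)\mathbbm{1}[v\in\past_\F(u)]$, the mass received by $\rho$ is $\sum_u c(u)\mathbbm{1}[\rho\in\past_\F(u)]$, and $\rho\in\past_\F(u)$ means precisely that $u$ lies on the forward ray from $\rho$; so the correct identity is
\[
\E\bigl[c(\rho)\,|\past_\F(\rho)|\bigr] \;=\; \E\Bigl[\sum_{u\in\mathrm{future}_\F(\rho)}c(u)\Bigr],
\]
not a sum over the past. Second---and this is the fatal problem---the right-hand side is a sum of conductances over the \emph{infinite} forward ray from $\rho$ (the ray is infinite for every vertex whenever $G$ is transient), so there is no reason for it to converge; in the unweighted case $c\equiv1$ it is simply $+\infty$. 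The transport therefore yields $\E[c(\rho)|\past_\F(\rho)|]=\infty$, which gives no information about whether $|\past_\F(\rho)|$ is finite.

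This is not just a defect of this particular transport: the strategy of proving $\E[|\past_\F(\rho)|]<\infty$ cannot work even in principle. By the computation above, for any unweighted transient unimodular graph one has $\E[|\past_\F(\rho)|]=\infty$ (e.g.\ on $\Z^d$, $d\ge 5$, where the past is a.s.\ finite but has a heavy power-law tail), so no amount of stacks-of-arrows or Green's function estimates will produce a finite first moment, and the hypothesis $\E[c(\rho)]<\infty$ is not enough to change this. One needs an a.s.\ argument that does not pass through an expectation of the past size. The proof cited from \cite{H15} is of an entirely different nature: it develops the wired cycle-breaking (update-tolerance) dynamics for the WUSF---the same mechanism the present paper exploits in \cref{S:wusfindist}---and derives one-endedness from stationarity of the forest under these updates, via a mass transport of a quite different shape, rather than from any moment bound.
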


\subsubsection{Unimodular random \ca{networks}}\label{S:unimodular}

We present here the necessary definition of unimodular random networks and refer the reader to the comprehensive monograph of Aldous and Lyons \cite{AL07} for more details and many examples. A \textbf{rooted graph} $(G,\rho)$ is a locally finite, connected graph $G$ together with a distinguished vertex $\rho$, the root. An isomorphism of graphs is an isomorphism of rooted graphs if it preserves the root. The ball of radius $r$ around a vertex $v$ of $G$, denoted $B_G(v,r)$ is the graph induced on the set of vertices which are at graph distance at most $r$ from $v$.
The \textbf{local topology} on the set of isomorphism classes of rooted graphs is defined so that two (isomorphism classes of) rooted graphs $(G,\rho)$ and $(G',\rho')$ are close to each other if and only if the rooted balls $(B_G(\rho,r),\rho)$ and $(B_{G'}(\rho',r),\rho')$ are isomorphic to each other for large $r$. We denote the space of isomorphism classes of rooted graphs endowed with the local topology by $\cG_\bullet$.
We define an \textbf{edge-marked graph} to be a locally finite connected graph together with a function $m:E(G)\to\mathbb{X}$ for some separable metric space $\mathbb{X}$, the \textbf{mark space} (in this paper, $\mathbb{X}$ will be a product of intervals and some copies of $\{0,1\}$). For example, if $G=(G,c)$ is a network and $\F$ is a sample of $\FUSF_G$, then $(G,c,\F)$ is a graph with marks in $(0,\infty)\times\{0,1\}$.
The local topology on rooted marked graphs is defined so that two marked rooted graphs are close if for large $r$ there is an isomorphism (of rooted graphs) $\phi: (B_G(\rho,r), m, \rho) \to (B_{G'}(\rho',r),\rho')$ such that $d_{\mathbb{X}}(m'(\phi(e)),m(e))$ is small for every edge $e$ in $B_G(\rho,r)$.
We denote the space of edge-marked graphs with marks in $\mathbb{X}$ by $\cG^\mathbb{X}_\bullet$.

Similarly, we define a \textbf{doubly-rooted graph} $(G,u,v)$ to be a graph together with an ordered pair of distinguished vertices.
The space $\cG_{\bullet\bullet}$ of doubly-rooted graphs is defined similarly to $\cG_\bullet$.
A random rooted graph $(G,\rho)$ is {\bf unimodular} if it obeys the {\bf mass-transport principle}. That is, for every non-negative Borel function $f:\Gg \to [0,\infty]$ -- which we call a \textbf{mass transport} -- we have that
\[ \E \sum_{v \in V} f(G,\rho,v) = \E \sum_{u\in V} f(G, u, \rho).\]
In other words, $(G,\rho)$ is unimodular if for every mass transport $f$, the expected mass received by the root equals the expected mass sent by the root.
Every Cayley graph (rooted at any vertex) is a  unimodular random rooted graph (whose law is concentrated on a singleton), as is every unimodular transitive graph \cite[\S8]{LP:book}. For many examples of a more genuinely random nature, see \cite{AL07}. Unimodular random rooted networks and other edge-marked graphs are defined similarly.

When $(G,\rho)$ is a unimodular random rooted network and $\F$ is a sample of either $\FUSF_G$ or $\WUSF_G$, $(G,\rho,\F)$ is also unimodular: Since the definitions of $\FUSF_G$ and $\WUSF_G$ do not depend on the choice of exhaustion,
for each mass transport $f:\cG_{\bullet\bullet}^{(0,\infty)\times \{0,1\}}\to[0,\infty]$, the expectations
\begin{align*}
f^F(G,u,v) = \FUSF_G\left[f(G,u,v,\F) \right] \quad\text{ and }\quad f^W(G,u,v)= \WUSF_G\left[f(G,u,v,\F) \right]
 \end{align*}
are also mass transports.  This allows us to deduce the mass-transport principle for $(G,\rho,\F)$ from that of $(G,\rho)$.

\subsubsection{Reversibility and stationarity}\label{S:reversibility}

Let $(G,\rho)$ be a random rooted network and let $\langle X_n \rangle_{n\geq0}$ be a random walk on $G$ started at $\rho$. The random rooted graph $(G,\rho)$ is said to be \textbf{stationary} if
\[ (G,\rho) \eqd (G,X_1) \]
and \textbf{reversible} if
\[ (G,\rho,X_1) \eqd (G,X_1,\rho). \]
While every reversible random rooted graph is trivially stationary, the converse need not hold in general. Indeed, every transitive graph (rooted arbitrarily) is stationary, while it is reversible if and only if it is unimodular. For example, the \emph{grandfather graph} \cite{LP:book} is transitive but not reversible.

The following correspondence between unimodular and reversible random rooted networks is implicit in \cite[\S4]{AL07} and is proven explicitly in \cite{BC2011}.
\begin{prop}\label{P:unimodularreversible} If $(G,\rho)$ is a unimodular random rooted network with $\E[c(\rho)] <\infty$, then biasing the law of $(G,\rho)$ by $c(\rho)$ (that is, reweighting the law of $(G,\rho)$ by the Radon-Nikodym derivative $c(\rho)/\E[c(\rho)]$) yields the law of a reversible random rooted network. Conversely, if $(G,\rho)$ is a reversible random rooted network with $\E[c(\rho)^{-1}]<\infty$ then biasing the law of $(G,\rho)$ by $c(\rho)^{-1}$ yields the law of a unimodular random rooted network.
\begin{equation*}\label{eq:reversibleunimodularcorrespondance} \left\{\begin{array}{l} (G,\rho) \text{ unimodular}\\ \text{with }\E[c(\rho)]<\infty\end{array}\right\} \begin{array}{l}\xrightarrow{\hspace{.45em}\text{bias by }c(\rho)\hspace{.45em}} \\ \xleftarrow[\text{bias by }c(\rho)^{-1}]{} \end{array}  \left\{\begin{array}{l}\hspace{0.75em} (G,\rho) \text{ reversible}\\ \text{with }\E[c(\rho)^{-1}]<\infty\end{array}\right\}.\end{equation*}
\end{prop}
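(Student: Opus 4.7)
My plan is to verify both directions by direct calculation, using only the mass-transport principle (MTP) and the definition of reversibility, together with a standard reduction to nearest-neighbor transports in the converse. For the forward direction, suppose $(G,\rho)$ is unimodular with $\E[c(\rho)] < \infty$, and let $\tilde\P$ denote its law biased by $c(\rho)/\E[c(\rho)]$. Given any non-negative Borel $F\colon \Gg \to [0,\infty]$, I would unfold
\[
\tilde\E F(G,\rho,X_1) = \frac{1}{\E[c(\rho)]}\, \E\!\left[c(\rho) \sum_v \frac{c(\rho,v)}{c(\rho)} F(G,\rho,v)\right] = \frac{1}{\E[c(\rho)]}\, \E \sum_v c(\rho,v) F(G,\rho,v),
\]
so that the bias exactly cancels the normalization in the one-step transition probability. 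I would then apply MTP to the mass transport $(G,u,v)\mapsto c(u,v) F(G,u,v)$, which is supported on neighbors and symmetric by $c(u,v)=c(v,u)$, to swap the roles of $\rho$ and $v$ and conclude $\tilde\E F(G,\rho,X_1) = \tilde\E F(G,X_1,\rho)$, which is exactly reversibility.

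For the converse, suppose $(G,\rho)$ is reversible with $\E[c(\rho)^{-1}] < \infty$, and let $\tilde\P$ denote its law biased by $c(\rho)^{-1}/\E[c(\rho)^{-1}]$. Given a non-negative Borel mass transport $f$ supported on pairs of neighbors, I would apply the reversibility identity $\E F(G,\rho,X_1) = \E F(G,X_1,\rho)$ to the test function $F(G,u,v) := f(G,u,v)/c(u,v)$, defined on adjacent pairs. The factor $c(\rho,v)/c(\rho)$ coming from $p(\rho,v)$ cancels the $1/c(\rho,v) = 1/c(v,\rho)$ appearing in $F$, leaving
\[
\E\!\left[c(\rho)^{-1} \sum_v f(G,\rho,v)\right] = \E\!\left[c(\rho)^{-1} \sum_v f(G,v,\rho)\right],
\]
which is exactly the MTP identity for $f$ under $\tilde\P$ after dividing by $\E[c(\rho)^{-1}]$.

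The main obstacle will be the last step of promoting MTP from nearest-neighbor transports to arbitrary Borel transports, as required to conclude unimodularity in the converse direction. I would handle this by appealing to the standard reduction in \cite{AL07}: a general Borel transport can be decomposed by the graph distance between its endpoints, and each distance-$n$ contribution re-routed along geodesic paths in a canonical, isomorphism-equivariant way, so that its MTP identity telescopes into a sum of nearest-neighbor MTP identities along the path. Since this step is essentially combinatorial bookkeeping that is carefully worked out in \cite{AL07} (in the non-biased setting, where the argument is identical) and spelled out explicitly in \cite{BC2011}, I would quote it rather than reproduce it in the present paper.
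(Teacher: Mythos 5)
The paper does not give a proof of this proposition; it cites \cite{AL07}, where the correspondence is described as implicit, and \cite{BC2011}, where it is proven explicitly, so there is no in-paper argument to compare against. Your proof is correct and is, up to presentation, the argument in \cite{BC2011}. The forward direction is a clean application of the mass-transport principle to the transport $(G,u,v)\mapsto c(u,v)F(G,u,v)$, using $c(u,v)=c(v,u)$; no reduction is needed there because reversibility is itself a nearest-neighbor statement. In the converse direction, testing the reversibility identity against $F(G,u,v)=f(G,u,v)/c(u,v)$ (set to zero off adjacent pairs) does yield the mass-transport identity for the biased law on transports supported on adjacent pairs, and the reduction to general Borel transports --- decomposing $f$ by graph distance and telescoping along isomorphism-equivariantly chosen geodesics --- is a purely formal statement about the mass-transport principle, hence unaffected by the biasing, exactly as you observe. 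Two small points worth making explicit if you write this out: the diagonal contribution of $f$ appears identically on both sides of the mass-transport identity, so the distance decomposition should start at distance one; and the telescoping chain along a geodesic is a chain of equalities of extended non-negative reals, so it requires no integrability hypothesis beyond non-negativity of $f$. With those caveats the argument is complete.
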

For example, a finite rooted network is unimodular if and only if, \ca{conditioned on $G$}, its root is uniformly distributed on the network, and is reversible if and only if, \ca{conditioned on $G$}, the root is distributed according to the stationary distribution of the random walk on the network.

Thus, to prove an almost sure statement about unimodular random rooted networks with $\E[c(\rho)]<\infty$ we can bias by the conductance at the root and work in the reversible setting, and vice versa.

A useful equivalent characterisation of reversibility is as follows. Let $\mathcal{G}_\leftrightarrow$
denote the space of isomorphism classes of graphs equipped with a
bi-infinite path $(G,\langle x_n\rangle_{n \in \Z})$, which is endowed with a natural variant of the local topology.  Let $(G,\rho)$ be a random rooted graph and let $\langle X_n \rangle_{n\geq 0}$ and $\langle X_{-n}\rangle_{n\geq 0}$ be two
independent simple random walks started from $X_0=\rho$, so that $(G,\langle X_n \rangle_{n \in \Z})$ is a random variable taking values in $\mathcal{G}_\leftrightarrow$. Then $(G,\rho)$ is reversible if and only if
\begin{equation}\label{Eq:reversibility}(G,\langle X_n \rangle_{n\in \Z}) \eqd (G,\langle X_{n+k} \rangle_{n \in \Z}) \quad \forall\, k \in \Z. \end{equation}
Indeed, $(\rho,X_{-1},\ldots)$ is a simple random walk
started from $\rho$ independent of $X_1$ and, conditional on $(G,X_1)$,
reversibility implies that $\rho$ is uniformly distributed among the
neighbours of $X_1$, so that $(X_1,\rho,X_{-1},X_{-2},\ldots)$ has the law of a
simple random walk from $X_1$ and \eqref{Eq:reversibility} follows. Conversely, \eqref{Eq:reversibility} implies that $(G,\rho)$ is reversible by taking $k=1$ and restricting to the 0th and 1st coordinates of the walk.

A useful variant of \cref{P:unimodularreversible} is the following. Suppose that $(G,\rho)$ is a unimodular random rooted network with $\E[c(\rho)]<\infty$, $\F$ is a sample of either $\FUSF_G$ or $\WUSF_G$, and let $c_\F(v)$ denote the sum of the conductances of the edges of $G$ emanating from $v$ that are included in $\F$. Then, if we sample $(G,\rho,\F)$ biased by $c_\F(\rho)$ and let $\langle X_n \rangle_{n\geq0}$ and $\langle X_{-n}\rangle_{n\geq0}$ be independent random walks on $\F$ starting at $\rho$, then, by \cite[Theorem 4.1]{AL07},
 \begin{equation}\label{Eq:reversibleforest0}(G,\langle X_{n} \rangle_{n\in \Z},\F) \eqd (G,\langle X_{n+k} \rangle_{n\in \Z},\F) \quad \forall\, k \in \Z. \end{equation}
\subsubsection{Ergodicity}\label{S:ergodicity}

We say that a unimodular random rooted network with $\E[c(\rho)]<\infty$ is \textbf{ergodic} if any (and hence all) of the below hold.
\begin{thm}[Characterisation of ergodicity {\cite[\S 4]{AL07}}]
  Let $(G,\rho)$ be a unimodular random rooted network with $\E[c(\rho)]<\infty$. The following are
  equivalent.
  \begin{enumerate}\itemsep0em
  \item When the law of $(G,\rho)$ is biased by $c(\rho)$ to give an equivalent reversible random rooted network, the stationary sequence $\langle (G,X_n) \rangle_{n\geq 0}$ is ergodic.
  \item Every event $A \subset \mathcal{G}_\bullet^{(0,\infty)}$ invariant to
    changing the root has probability in $\{0,1\}$.
  \item The law of $(G,\rho)$ is an extreme point of the weakly closed
    convex set of laws of unimodular random rooted networks.
\end{enumerate}
\end{thm}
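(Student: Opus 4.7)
My plan is to prove the cyclic chain $(3) \Rightarrow (2) \Rightarrow (1) \Rightarrow (3)$. Throughout let $\mu$ denote the law of $(G,\rho)$ and $\hat\mu$ its size-biasing by $c(\rho)/\E[c(\rho)]$, under which $(G,\rho)$ is reversible by \cref{P:unimodularreversible}; since $\mu$ and $\hat\mu$ are mutually absolutely continuous, each of the three conditions is insensitive to passage between them. For $(3) \Rightarrow (2)$: if a root-invariant event $A \subseteq \G^{(0,\infty)}$ had $\mu(A) \in (0,1)$, the conditioned laws $\mu_1(\cdot) = \mu(\cdot \mid A)$ and $\mu_2(\cdot) = \mu(\cdot \mid A^c)$ give a non-trivial decomposition $\mu = \mu(A)\mu_1 + \mu(A^c)\mu_2$. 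Each $\mu_i$ is itself unimodular: applying the mass-transport principle for $\mu$ to the transport $(G,u,v) \mapsto f(G,u,v)\mathbf{1}_A(G,u)$ produces, using root-invariance $\mathbf{1}_A(G,u) = \mathbf{1}_A(G,v)$, the mass-transport identity for $\mu_i$. This contradicts extremality. The converse direction $(1) \Rightarrow (3)$ is equally short: a non-trivial decomposition $\mu = \alpha\nu_1 + (1-\alpha)\nu_2$ into unimodular laws size-biases to a non-trivial decomposition of $\hat\mu$ into two distinct stationary laws for $\langle(G,X_n)\rangle_{n\geq 0}$, whence the sequence is not ergodic.

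For $(2) \Rightarrow (1)$: let $B$ be a shift-invariant event for $\langle(G,X_n)\rangle_{n\geq 0}$ under $\hat\mu$. Extend the walk to a two-sided walk $\langle X_n\rangle_{n\in\Z}$ via \eqref{Eq:reversibility}; shift-invariance of $B$ together with reversibility makes $\mathbf{1}_B$ measurable with respect to both the forward tail $\bigcap_n \sigma(G,X_n,X_{n+1},\ldots)$ and the backward tail $\bigcap_n \sigma(G,X_{-n},X_{-n-1},\ldots)$. The Markov property renders these two tails conditionally independent given $\sigma(G,X_0)$, so their intersection coincides with $\sigma(G,X_0)$ modulo $\hat\mu$-null sets and hence $\mathbf{1}_B = \phi(G,X_0)$ where $\phi(G,v) := \mathbf{P}_v(B)$ is harmonic in $v$. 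Since $\phi$ is $\{0,1\}$-valued $\hat\mu$-a.s., the averaging (maximum principle) argument forces $\phi$ to be constant on each connected graph, so the event $A := \{(G,\rho) : \phi(G,\rho) = 1\}$ is root-invariant with $\hat\mu(A) = \hat\mu(B)$. Condition $(2)$ then forces $\hat\mu(B) \in \{0,1\}$.

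The main obstacle will be the conditional-independence step in $(2) \Rightarrow (1)$, namely justifying that the intersection of the forward and backward tail $\sigma$-algebras of the two-sided stationary walk coincides with $\sigma(G,X_0)$ modulo $\hat\mu$-null sets. This is the Markov-chain analogue of the Hewitt--Savage zero-one law and uses reversibility crucially to align the forward and backward walks on the same stationary environment. Once this is granted, the remaining ingredients (harmonicity of $\phi$ via the Markov property, the passage from $\{0,1\}$-valued harmonic functions to graph-constants via averaging, and the translation between events on path space and events on $\G^{(0,\infty)}$) are routine.
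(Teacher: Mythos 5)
The paper itself cites this result to \cite[\S4]{AL07} without reproducing a proof, so I am assessing your argument on its own merits rather than against an in-paper proof. Your overall cyclic strategy $(3)\Rightarrow(2)\Rightarrow(1)\Rightarrow(3)$ is correct, and the two short legs are fine: the mass-transport computation showing $\mu(\cdot\mid A)$ is unimodular for root-invariant $A$ is exactly right, and pushing a non-trivial convex decomposition of $\mu$ through the conductance-biasing to contradict ergodicity (extremality among stationary laws) is standard. One small wording issue: conditional independence of $\mathcal{F}^+$ and $\mathcal{F}^-$ given $\sigma(G,X_0)$ only yields $\mathcal{F}^+\cap\mathcal{F}^-\subseteq\overline{\sigma(G,X_0)}$, not equality (indeed $X_0$ is not measurable with respect to either tail); your argument only uses the inclusion, so this is cosmetic.

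The genuine gap is in $(2)\Rightarrow(1)$, and it is not where you place it. The containment $\mathcal{F}^+\cap\mathcal{F}^-\subseteq\overline{\sigma(G,X_0)}$ is not the hard part: for $E\in\mathcal{F}^+\cap\mathcal{F}^-$, conditional independence gives $\widehat\P(E\mid G,X_0)=\widehat\P(E\mid G,X_0)^2$, hence $\mathbf{1}_E=\widehat\P(E\mid G,X_0)$ a.s., done. The actual obstacle is your claim that shift-invariance of $B$ ``together with reversibility'' makes $\mathbf{1}_B$ measurable with respect to the \emph{backward} tail. Shift-invariance of the lift $\tilde B$ does put it in $\mathcal{F}^+$ (since $\tilde B=S^{-n}\tilde B\in\sigma(G,X_m:m\geq n)$), but the analogous manipulation only gives $\tilde B=S^n\tilde B\in\sigma(G,X_m:m\geq -n)$, which is vacuous; reversibility alone does not close this gap, because the time-reversal $R$ sends $\tilde B$ to a possibly different (though equidistributed) event $R\tilde B\in\mathcal{F}^-$. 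The standard fix is a stationarity-plus-martingale argument: set $\phi(G,v)=\mathbf{P}_v(B)$ and observe, by the Markov property and shift-invariance of $B$, that $\phi(G,X_n)=\widehat\P(B\mid G,X_0,\ldots,X_n)$ is a bounded martingale converging a.s.\ to $\mathbf{1}_B$; since $\langle(G,X_n)\rangle_{n\geq0}$ is stationary, $\E[\phi(G,X_{n+1})^2]=\E[\phi(G,X_n)^2]$, while the martingale property forces $\E[\phi(G,X_{n+1})^2]=\E[\phi(G,X_n)^2]+\E[(\phi(G,X_{n+1})-\phi(G,X_n))^2]$, so $\phi(G,X_n)=\phi(G,X_0)$ a.s.\ for all $n$, whence $\phi(G,X_0)=\mathbf{1}_B$ a.s. This one-sided argument actually bypasses the two-sided walk and the tail-intersection machinery entirely; after that, your passage from the $\{0,1\}$-valued harmonic $\phi$ to a root-invariant event via the maximum principle goes through as you describe.
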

A similar statement holds for edge-marked networks. Tail triviality of the USFs \cite[Theorem 8.3]{BLPS} implies that if $(G,\rho)$ is an ergodic unimodular random rooted network and $\F$ is a sample of either $\FUSF_G$ or $\WUSF_G$, then $(G,\rho,\F)$ is also ergodic.

The extremal characterisation $(3)$ implies (by Choquet theory) that every unimodular random rooted network with $\E[c(\rho)]<\infty$ can be written as a mixture of ergodic unimodular random rooted networks. Thus, to prove a.s.~statements about general unimodular random rooted networks it suffices for us to consider ergodic unimodular random rooted networks.

\subsection{Component properties and indistinguishability on \URNs}\label{S:compprop}
General unimodular random rooted graphs and networks have few automorphisms, so that it is not appropriate at this level of generality to phrase indistinguishability in terms of automorphism-invariant properties.
Instead, we consider properties that are invariant under rerooting within a component as follows.
Consider the space $\cG^{\{0,1\}}_\bullet$ of rooted graphs with edges marked by $\omega(e)\in\{0,1\}$, which we think of as a rooted graph together with a distinguished subgraph spanned by the edges $\omega=\{e:\omega(e)=1\}$.
Given such a $(G,v,\omega)$ we define $K_\omega(v)$ to be the connected component of $v$ in $\omega$.

\begin{defn} A Borel-measurable set $\A \subset \Gb$ is called a \textbf{component property} if and only if it is invariant to rerooting within the component of the root, i.e.,
$$ (G, v, \omega) \in \A \Longrightarrow (G, u, \omega) \in \A \,\,\, \forall u \in K_\omega(v) \, .$$
\end{defn}
Again, this may be formulated for networks with the obvious modifications. This definition is equivalent to the one given in \cite[Definition 6.14]{AL07}.
We say that a connected component $K$ of $\omega$ has property $\sA$ (and abuse notation by writing $K \in \sA$) if $(G,u,\omega)\in \sA$ for some (and hence every) vertex $u\in K$. We are now ready to state our main theorem in its full generality and strength.

\begin{thm}[Indistinguishability of USF components] \label{mainthm} Let $(G,\rho)$ be a unimodular random network with $\E[c(\rho)]<\infty$, and let $\F$ be a sample of either $\FUSF_G$ or $\WUSF_G$. Then for every component property $\A$, either every connected component of $\F$ has property $\A$ or none of the connected components of $\F$ have property $\sA$ almost surely.
\end{thm}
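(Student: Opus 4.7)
The strategy is an adaptation of the Lyons--Schramm indistinguishability argument \cite{LS99}, whose central obstacle is that uniform spanning forests are not insertion-tolerant: adding an edge creates a cycle that must be broken. The first step is therefore to establish an \emph{update-tolerance} property for both $\FUSF$ and $\WUSF$ (the latter is due to \cite{H15}): given an oriented edge $e$ with tail $u$, we can insert $e$ into a sample $\F$ and in exchange delete some random edge $e'$ incident to $u$ chosen as a function of $(G,u,e,\F)$, in such a way that the law of the modified forest is absolutely continuous with respect to the law of $\F$. Via the Choquet decomposition of Section~\ref{S:ergodicity} I would reduce to the case $(G,\rho)$ ergodic, and argue by contradiction: suppose that with positive probability (hence, by ergodicity and tail triviality of the USFs, almost surely) $\F$ has both a component in $\sA$ and a component not in $\sA$.

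\textbf{Case 1: $\FUSF_G\neq\WUSF_G$.} Here I would first prove directly, using update-tolerance together with the mass-transport principle and the characterisation of $\FUSF\neq\WUSF$ via non-constant harmonic Dirichlet functions, that every tree of $\F$ is transient and has infinitely many ends almost surely (\cref{T:everyFUSFtreeistransient}); I would then bootstrap this into indistinguishability. The richness guaranteed by transience and infinitely many ends gives, near any finite window, plenty of ``pivotal'' edges $e$ joining vertices that lie in components of different types. Applying update-tolerance to such an $e$ produces a new forest whose law is absolutely continuous with respect to the law of $\F$ but in which the type of at least one component containing $e$ has changed. Choosing pivotals arbitrarily far from $\rho$ shows that the event $\{T_\F(\rho)\in\sA\}$ cannot be approximated by cylinder events, contradicting its Borel-measurability. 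The delicate point is controlling the edge $e'$ that is deleted by the update: using unimodularity and the infinitely-ended structure of the components I would ensure that, with uniformly positive conditional probability, $e'$ lies far from the pivotal junction so that the updated configuration still realises the desired change of type.

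\textbf{Case 2: $\F$ is a $\WUSF$ sample} (this also covers the $\FUSF$ when $\FUSF_G=\WUSF_G$). Now every component is one-ended by \cref{oneend}, so update-tolerance can no longer freely merge components, and the pivotal-edge argument above fails. I would decompose $\sA$ according to whether it is a \emph{tail} property (invariant under finite modifications of the component) or a \emph{non-tail} property, and prove indistinguishability separately in each case. The non-tail case I would handle by a variant of the Lyons--Schramm scheme adapted to update-tolerance, since non-tail properties are by definition sensitive to exactly the sort of local alterations that update-tolerance can perform, so that a pivotal-edge/absolute-continuity argument still produces the required contradiction. The tail case is genuinely different: update-tolerance cannot change a tail property, so I would instead invoke Wilson's algorithm rooted at infinity in combination with the spatial Markov property. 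The idea is that conditioning on $\F$ outside a large ball, the interior may be resampled by loop-erased random walks; coupling two such resamplings and using tail triviality of $\WUSF$ (together with reversibility from \cref{P:unimodularreversible} and \eqref{Eq:reversibleforest0}) shows that the tail type of $T_\F(\rho)$ is determined by the geometry of a single component at infinity, and hence by ergodicity must be constant across all components.

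The step I expect to be the main obstacle is the tail case in the $\WUSF$ setting: all prior indistinguishability arguments have used some form of local tolerance, but update-tolerance is, by definition, blind to tail events. The replacement using Wilson's algorithm and the spatial Markov property is what takes us beyond the \cite{LS99} framework, and the bookkeeping required to transfer information between two coupled loop-erased samples and a fixed tail-type witness near the root is where I expect most of the technical difficulty to reside.
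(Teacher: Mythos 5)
Your proposal follows essentially the same architecture as the paper's proof: the split into the $\FUSF_G\neq\WUSF_G$ case and the WUSF case, update-tolerance in place of insertion-tolerance, proving transience and infinitely-many-ends first for the FUSF and then bootstrapping to indistinguishability, and the tail/non-tail dichotomy for the WUSF with Wilson's algorithm and the spatial Markov property handling tail properties. The one bridge your sketch glosses over is that the paper's Lemma~\ref{specialpivotalswusf} makes the two WUSF subcases dovetail cleanly: if no ``good'' pivotal edge exists for the update operation, the component property is necessarily $\WUSF_G$-equivalent to a tail property, so that your pivotal/update argument and your Wilson/Markov argument together cover every component property without gaps.
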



And we may now restate Theorems \ref{1inftycomponentssimple}, \ref{T:Longrangedisordersimple} and \ref{T:everyFUSFtreeistransientsimple} in their full generality.

\begin{thm}\label{1inftycomponents} Let $(G,\rho)$ be a unimodular random rooted network with $\E[c(\rho)]<\infty$ and let $\F$ be a sample of $\FUSF_G$. Then $\F$ is either connected or has infinitely many components almost surely. \end{thm}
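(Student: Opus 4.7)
By the ergodic decomposition described in \cref{S:ergodicity}, we may assume $(G,\rho)$ is ergodic; then the event $\{\FUSF_G = \WUSF_G\}$ is deterministic and the number $N$ of components of $\F$ is an almost-sure constant. We wish to show $N\in\{1,\infty\}$. In the case $\FUSF_G = \WUSF_G$ a.s., $\F$ is distributed as $\WUSF_G$ and the claim reduces to the unimodular analogue of the BLPS theorem for WUSF components, which in turn follows from tail triviality of $\WUSF_G$ together with the mass transport principle. We therefore concentrate on the case $\FUSF_G \neq \WUSF_G$ a.s., in which \cref{T:everyFUSFtreeistransient} guarantees that every component of $\F$ is transient and infinitely-ended a.s.

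Suppose for contradiction that $2 \leq N < \infty$. Following the template of \cite[Theorem 4.1]{LS99} (developed there for insertion-tolerant percolation), the plan is to combine \cref{mainthm} with the update-tolerance property of the FUSF from \cref{S:FUSFindist}. The first step is to establish \cref{T:Longrangedisorder}: whenever $\F$ is disconnected, for every $v\in V(G)$,
\[
    \inf\bigl\{\FUSF_G(u \in T_\F(v)) : u \in V(G)\bigr\} = 0.
\]
To prove this I would introduce, for each $\delta>0$, a component property $\sA_\delta$ expressing that the component contains at least a $\delta$-fraction of vertices inside every sufficiently large neighbourhood of the root (suitably Borel-coded so that it depends only on the rooted component). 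If the infimum above were positive at some $v$, then $\sA_\delta$ would hold on $T_\F(\rho)$ with positive probability for some $\delta>0$, and hence, by \cref{mainthm}, on every component of $\F$ almost surely. Applying update-tolerance in the spirit of the Lyons--Schramm merger argument would then produce, with positive probability, a vertex lying in the dense portion of two distinct components simultaneously, contradicting the disjointness of components.

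Given \cref{T:Longrangedisorder}, the assumption $N<\infty$ yields the final contradiction: by \cref{mainthm} the $N$ components are statistically equivalent, so a unimodular mass-transport argument forces each to carry equal mass at $\rho$, giving a uniform positive lower bound on $\FUSF_G(u\in T_\F(v))$ along a positive-density subset of $u \in V(G)$, incompatible with the decay. The main obstacle lies in the update-tolerance step: unlike in the insertion-tolerant setting of \cite{LS99}, inserting an edge into the FUSF requires the compensating deletion of an adjacent edge, which could a priori create a finite component and disrupt the essential spanning forest structure. I would handle this by exploiting the infinitely-ended structure of components guaranteed by \cref{T:everyFUSFtreeistransient}, selecting the deleted edge from within the core of a component so that both resulting pieces remain infinite, thereby allowing the merger argument to proceed.
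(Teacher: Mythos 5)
Your high-level scaffolding is sound --- ergodic decomposition, the case split on $\FUSF_G = \WUSF_G$ (handled via BLPS), the reduction to the $\FUSF_G \neq \WUSF_G$ case where \cref{T:everyFUSFtreeistransient} provides transience and infinitely many ends, and the observation that the compensating edge deletion in an update can be directed through a core vertex so that both pieces stay infinite. These all match what the paper does. But the heart of your argument does not close, for two linked reasons.

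First, the logical order is reversed relative to the paper, and the reversal breaks the deduction. In the paper, \cref{T:Longrangedisorder} is \emph{derived from} \cref{1inftycomponents}: once one knows there are infinitely many components, the random-walk frequencies sum to at most $1$ and are all equal by indistinguishability, so they all vanish, which is the decay. You instead want to prove the decay first and then say that $2 \le N < \infty$ contradicts it. But it does not: having $N$ components each of frequency $1/N$ is perfectly consistent with $\inf_u \FUSF_G(u \in T_\F(v)) = 0$, since the infimum is over all $u$ (including arbitrarily far-away ones that rarely share a component with $v$), while the frequency statement only controls a positive-density set. So even if you had \cref{T:Longrangedisorder}, your ``final contradiction'' step is not a contradiction.

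Second, you are missing the two lemmas that actually carry the proof. The paper first proves a \emph{Component Frequency} lemma (an adaptation of Lyons--Schramm) showing that the asymptotic fraction of time a random walk spends in a given component is a well-defined a.s.\ constant $\mathrm{Freq}(T)$ depending only on $(G,\rho,\F,T)$. It then proves a \emph{no zero-frequency branches} lemma: if $\mathrm{Freq}(T_\F(u)) > 0$, then every infinite branch of $T_\F(u)$ has positive upper frequency, via a mass-transport argument close in spirit to \cref{nobranches}. With these in hand, the finite-$N$ case dies quickly: all $N$ components have frequency $1/N$ by \cref{mainthmfusf}; a finite sequence of updates merges one component with a branch $K_1$ of another, and by update-tolerance the merged component in the updated forest still has frequency $1/N$, forcing $\mathrm{Freq}(K_1) = 0$ --- contradicting the second lemma. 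Your sketch instead speaks of ``a vertex lying in the dense portion of two distinct components simultaneously,'' which is not coherent: after the merge those vertices belong to a single component, so there is no disjointness to violate. The contradiction has to run through the frequency of the severed branch, and that requires the no-zero-frequency-branches lemma, which your proposal never supplies. Without it (and without the frequency lemma that gives $\mathrm{Freq}$ its meaning), the merger step proves nothing.
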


\begin{thm}\label{T:Longrangedisorder} Let $(G,\rho)$ be a unimodular random rooted network with $\E[c(\rho)]<\infty$ and let $\F$ be a sample of $\FUSF_G$. If $\F$ is disconnected a.s., then a.s.~for every vertex $v$ of $G$,
\[\inf\{\FUSF_G(u\in T_\F(v)) : u \in V(G)\}=0. \]
\end{thm}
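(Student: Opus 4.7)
The plan is to derive \cref{T:Longrangedisorder} from the main indistinguishability theorem (\cref{mainthm}) via an argument modelled on \cite[Theorem 4.1]{LS99}. Arguing by contradiction, I would assume that there is a positive-probability event on which some vertex $v \in V(G)$ satisfies $\inf_{u} \FUSF_G(u \in T_\F(v)) \geq \epsilon$ for some $\epsilon > 0$. Passing to the ergodic decomposition described in \cref{S:ergodicity}, we may assume $(G,\rho)$ is ergodic, so that this bound holds almost surely with a deterministic $\epsilon$, and in particular for $v = \rho$.

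I would then introduce a component property $\sA$ expressing ``the component occupies a positive fraction of the network'', for instance by declaring a component $K$ to lie in $\sA$ iff for some (equivalently, every) $v \in K$,
\[
\liminf_{n \to \infty} \frac{\sum_{u \in K \cap B_G(v, n)} c(u)^{-1}}{\sum_{u \in B_G(v, n)} c(u)^{-1}} > 0,
\]
or via some analogous mass-transport-compatible averaging. The quantitative lower bound $\FUSF_G(u \in T_\F(\rho)) \geq \epsilon$ for every $u$, together with the mass-transport principle applied to a transport of the shape $f(G, x, y, \omega) = c(y)^{-1} \mathbf{1}_{y \in T_\omega(x)}$, should force $T_\F(\rho)$ to lie in $\sA$ with positive probability. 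Invoking \cref{mainthm} then upgrades this to the statement that almost surely every component of $\F$ lies in $\sA$.

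To close the argument, I would use \cref{1inftycomponents}: since $\F$ is disconnected a.s., it has infinitely many components a.s., and each of these has density bounded below by a positive constant. A pigeonhole or mass-transport computation then yields a total density strictly greater than $1$, contradicting the fact that the components partition $V(G)$. The main obstacle is formulating the density property $\sA$ so that it is simultaneously (a)~a genuine component property in the sense of \cref{S:compprop}, depending only on the rooted realization and invariant under rerooting within the component; (b)~satisfied by $T_\F(\rho)$ with positive probability under the quantitative assumption; and (c)~additive enough across components to produce the contradiction in the final step. Getting all three to hold in the unimodular random rooted network setting (where notions of density are more subtle than in the transitive case) is the technical heart of the derivation.
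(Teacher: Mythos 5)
Your plan is in the right spirit---invoke indistinguishability and \cref{1inftycomponents}, introduce a density-type component property, and derive a contradiction from additivity---but it has a genuine gap exactly at the point you yourself flag as the technical heart, and a second gap earlier that you do not flag.

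The earlier gap: the quantitative hypothesis $\FUSF_G(u\in T_\F(\rho))\geq\epsilon$ is an \emph{expectation} over realizations, and translating it into a statement that the realized component $T_\F(\rho)$ has positive ball-density with positive probability is not automatic. Conditioning on $(G,\rho)$, Fubini does give you that the expected normalized ball-average is at least $\epsilon$ for every $n$, hence (by reverse Fatou) that the $\limsup$ ball-density is $\gtrsim\epsilon$ with positive probability. But you want a $\liminf$-type statement to make the final additivity/pigeonhole step work: a sum of $\liminf$'s is bounded by the $\liminf$ of the sum, whereas a sum of $\limsup$'s is bounded \emph{below} by $\limsup$ of the sum, which is the wrong direction for a contradiction. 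Fatou in the usable direction only gives $\E[\liminf]\leq\liminf\E$, which does not lower-bound the realized $\liminf$. So the property you write down is not demonstrably witnessed by $T_\F(\rho)$ even under the standing assumption.

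The gap you do flag is also real: ball-averaging densities in a general unimodular random rooted network need not converge, need not be invariant under rerooting within a component (so the set you define may fail to be a component property), and do not come with the additivity you need. The paper avoids both issues by replacing geometric density with the random-walk \emph{frequency} of \cref{L:frequencies}: $\mathrm{Freq}(T)=\lim_N \frac{1}{N}\sum_{n=1}^{N}\mathbbm{1}(X_n\in T)$, which is shown to exist (via a subadditive-ergodic argument), to be constant in the starting point, and to satisfy $\sum_T\mathrm{Freq}(T)\leq1$ for free. With this notion the argument becomes direct rather than by contradiction: $\F$ has infinitely many components by \cref{1inftycomponents}; by \cref{mainthmfusf} all components have the same frequency; since frequencies sum to at most one, every frequency is zero; and $\mathrm{Freq}(T_\F(\rho))=0$ almost surely already gives $\frac{1}{N}\sum_{n\leq N}\P(X_n\in T_\F(\rho))\to 0$, hence $\P(v\in T_\F(\rho))<\epsilon$ for some $v$, which is the conclusion. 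Your quantitative $\epsilon$ and the contradiction framing are superfluous once the right frequency notion is in hand.
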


\begin{thm}\label{T:everyFUSFtreeistransient} Let $(G,\rho)$ be a unimodular random rooted network and let $\F$ be a sample of the $\FUSF_G$. On the event that the measures $\FUSF_G$ and $\WUSF_G$ are distinct, every component of $\F$ is transient and has infinitely many ends almost surely. \ca{This holds both when the edges of $\F$ are given \ct{the} conductances inherited from $G$ and when they are given unit conductances.} \end{thm}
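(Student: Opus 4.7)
The plan is to reduce via the Choquet decomposition from \cref{S:ergodicity} to the ergodic case with $\FUSF_G \ne \WUSF_G$ a.s., to take a monotone coupling $\F_\WUSF \subseteq \F_\FUSF$ (which exists for USFs as a weak limit of the Feder--Mihail couplings of finite-graph $\UST$s), and to write $D = \F_\FUSF \setminus \F_\WUSF$. By Morris's theorem (\cref{oneend}), each WUSF component is one-ended a.s., and because $\F_\FUSF$ is a forest each edge of $D$ joins two different WUSF components. Hence every FUSF component $C$ is a tree of one-ended WUSF sub-components glued along the $D$-edges contained in $C$, and its number of ends equals $1 + |D \cap E(C)|$ whenever this is finite.

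To prove the infinitely-many-ends part, I would first argue that every FUSF component contains at least one $D$-edge. The event $\{D \cap E(T_{\F_\FUSF}(\rho)) = \emptyset\}$ is a component property of the ergodic coupled unimodular network $(G,\rho,\F_\FUSF,\F_\WUSF)$, so by ergodicity it has probability $0$ or $1$; if it equalled $1$ we would conclude $\F_\FUSF = \F_\WUSF$ almost surely, contradicting the standing hypothesis. To upgrade ``at least one'' to ``infinitely many'' I would apply the mass transport
\[
f(G, u, v, \F_\FUSF, \F_\WUSF) \;=\; \frac{\mathbf{1}\{v \in T_{\F_\FUSF}(u),\ d_D(v) > 0\}}{\bigl|\{w \in T_{\F_\FUSF}(u)\,:\,d_D(w) > 0\}\bigr|}
\]
(with the convention $0/0 := 0$): its total outgoing mass is always at most $1$, while on the event $\{d_D(\rho) > 0\}$ its total incoming mass equals $|T_{\F_\FUSF}(\rho)|\,/\,|\{w \in T_{\F_\FUSF}(\rho) : d_D(w) > 0\}|$, which is infinite whenever the denominator is finite. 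The mass-transport principle therefore forces the denominator to be infinite almost surely on this event, and combining with the previous paragraph every FUSF component contains infinitely many $D$-edges and hence has infinitely many ends.

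For transience, the rooted FUSF component $(T_{\F_\FUSF}(\rho),\rho)$ is itself a unimodular random rooted tree with finite expected degree (inherited from $(G,\rho)$), and by the previous step has infinitely many ends almost surely. I would then invoke the result from Aldous--Lyons theory \cite[\S8]{AL07} that a unimodular random rooted graph of finite expected degree is recurrent if and only if it is amenable, together with the observation that a unimodular random rooted tree with infinitely many ends cannot be amenable (via a branching-number argument, or directly by exhibiting the requisite non-amenable isoperimetric inequality). Equivalence of the two conductance choices follows from $\E[c(\rho)] < \infty$ and Rayleigh monotonicity after the bias of \cref{P:unimodularreversible}. Finally, to pass from the root's component to every component in both the infinitely-ended and transient statements, I would use the standard MTP fact that in an ergodic URN any component-invariantly defined subset $V$ of vertices with $\P(\rho \in V) = 0$ must be almost surely empty (apply the transport $\mathbf{1}\{v \in V \cap B_G(u,r)\}$, take expectations, and let $r \to \infty$).

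The main obstacle is the transience step: deducing transience from infinitely many ends requires an additional structural input, either the amenability-versus-transience dichotomy for unimodular random networks or a hand-built finite-energy flow exploiting the disjoint one-ended WUSF sub-components and the $D$-edges linking them. The infinitely-many-ends step, by contrast, reduces cleanly to the combination of Morris's theorem and the two mass-transport arguments above, though one must be careful both to verify ergodicity of the coupled URN and to correctly handle components whose root happens to satisfy $d_D(\rho) = 0$.
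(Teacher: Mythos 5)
Your proposal takes a genuinely different route from the paper's. The paper does not use a monotone coupling at all: it shows $\E[\deg_\F(\rho)]>2$ (via $\FUSF \succeq \WUSF$ and $\E[\deg_{\WUSF}(\rho)]=2$), invokes the fact (Prop.~4.9 and Thm.~6.2 of Aldous--Lyons) that a unimodular random rooted tree of finite expected degree is infinitely-ended $\iff$ transient, with positive probability $\iff\E[\deg]>2$, to conclude that \emph{some} component is transient and infinitely-ended; it then rules out the coexistence of recurrent components with transient ones by \cref{coexistimpliesbranch} (update-tolerance lets you drag a path from a recurrent component into a transient one and thereby manufacture a transient tree with a recurrent branch) combined with \cref{nobranches} (a mass-transport argument showing transient unimodular trees have no recurrent branches). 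Your monotone-coupling picture, one-ended WUSF pieces glued along $D$-edges, is appealing and the mass transport you write down correctly proves that any FUSF component meeting $D$ meets it infinitely often; the transience step is also essentially the same ingredient the paper cites.

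The genuine gap is in the step ``every FUSF component contains at least one $D$-edge.'' You assert that $\{D\cap E(T_{\F_\FUSF}(\rho))=\emptyset\}$ has probability $0$ or $1$ ``by ergodicity.'' But ergodicity of the coupled URN only gives a $0$--$1$ law for events invariant under re-rooting \emph{anywhere in $G$}; your event is invariant only under re-rooting within the component of $\rho$ — it is precisely a component property, and the assertion that component properties are $0$--$1$ is the indistinguishability theorem, which in the paper is \emph{deduced from} \cref{T:everyFUSFtreeistransient}, so you cannot use it here without circularity. What remains unexcluded is the coexistence scenario: a positive-probability set of FUSF components that coincide with (one-ended, recurrent) WUSF components alongside others that pick up $D$-edges. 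Your framework has no mechanism to forbid this — the monotone coupling never mixes components — whereas the paper's update-tolerance does exactly that job in \cref{coexistimpliesbranch}. To complete your proof you would need either to import update-tolerance and reprove something like \cref{coexistimpliesbranch}, or to find a mass-transport contradiction from the coexistence itself, which does not seem to follow from the coupling alone. (Two smaller cautions: verify that the limiting Feder--Mihail coupling is itself a unimodular marking, and note that Rayleigh monotonicity by itself does not transfer transience from unit conductances to conductances that may be small — the paper resorts to a further citation of Aldous--Lyons, Prop.~4.10, for that direction.)
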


\noindent It follows that, under the assumptions of \cref{T:everyFUSFtreeistransient}, every component of the FUSF of $G$ has positive speed and critical percolation probability $p_c<1$  \cite{AL07}.

\medskip

We remark that, by  \cite[Proposition 5]{GabLyons07},  \cref{mainthm} is equivalent to the following ergodicity statement.

\begin{corollary}\label{c:ergodicity} Let $(G,\rho)$ be an ergodic unimodular random rooted network with $\E[c(\rho)]<\infty$ and let $\F$ be a sample of either $\FUSF_G$ or $\WUSF_G$.
Then $(T_\F(\rho),\rho)$ is an ergodic unimodular random rooted network.
 Moreover, if we bias the distribution of  $(G,\rho,\F)$ by $c_\F(\rho)$ and let $\langle X_n \rangle_{n\geq0}$ and $\langle X_{-n}\rangle_{n\geq0}$ be independent random walks on $\F$ started at $\rho$, then the stationary sequence
$\left\langle (G,\langle X_{n+k} \rangle_{n\in \Z},\F)\right\rangle_{k\in \Z}$ is ergodic.
\end{corollary}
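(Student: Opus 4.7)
The plan is to deduce both parts of the corollary from \cite[Proposition 5]{GabLyons07} applied to the invariant bond percolation $\F$ on $(G,\rho)$. The Gaboriau--Lyons proposition asserts that for any invariant bond percolation on a unimodular random rooted network, indistinguishability of components (in the sense of Section \ref{S:compprop}) is equivalent both to ergodicity of the cluster of the root as a unimodular random rooted network and to ergodicity of the associated stationary random walk sequence. So it suffices to verify the hypotheses of this equivalence in our setting and translate the conclusions.

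The hypotheses are immediate here: $(G,\rho,\F)$ is a unimodular random rooted edge-marked network (Section \ref{S:unimodular}), and Theorem \ref{mainthm} supplies indistinguishability of the components of $\F$. For the first conclusion --- that $(T_\F(\rho),\rho)$ is an ergodic unimodular random rooted network --- unimodularity may be verified directly by extending any mass transport on the cluster to one on $(G,\rho,\F)$ that vanishes unless the two vertices lie in the same component of $\F$. Ergodicity then follows because any rerooting-invariant event on $(T_\F(\rho),\rho)$ pulls back to a component property on $\Gb$, which has probability in $\{0,1\}$ by Theorem \ref{mainthm}.

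For the second conclusion --- ergodicity of the walk sequence --- apply the edge-marked version of the random-walk characterization of ergodicity from Section \ref{S:ergodicity}. Biasing $(G,\rho,\F)$ by $c_\F(\rho)$ yields, via \eqref{Eq:reversibleforest0}, a reversible process on which the bi-infinite walk along $\F$ is stationary; since the walk started at $\rho$ remains in $T_\F(\rho)$, this is precisely the walk on the cluster of the root. Ergodicity of the full process $\langle(G,\langle X_{n+k}\rangle_{n\in\Z},\F)\rangle_{k\in\Z}$ then follows by combining the component-level ergodicity already established with ergodicity of the ambient marked network $(G,\rho,\F)$, which in turn is guaranteed by tail triviality of the USFs (Section \ref{S:ergodicity}). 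The main substantive content sits inside \cite[Proposition 5]{GabLyons07}; the key observations supporting it are the bijection between component properties of $(G,\rho,\F)$ and rerooting-invariant events on the cluster of the root, and the identification of random walk on $T_\F(\rho)$ with random walk on $\F$ started from $\rho$.
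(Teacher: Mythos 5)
Your proposal matches the paper's own treatment: the paper does not give a proof of this corollary but simply remarks that, by \cite[Proposition 5]{GabLyons07}, it is equivalent to \cref{mainthm}. Your unpacking of how the Gaboriau--Lyons equivalence is applied (extending mass transports, pulling back rerooting-invariant events to component properties, identifying the walk on $T_\F(\rho)$ with the walk on $\F$ via \eqref{Eq:reversibleforest0}) is consistent with that route and supplies reasonable detail that the paper leaves implicit.
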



\subsubsection{Examples of component properties}\label{comppropexamples}

\begin{example}[Automorphism-invariant properties] Let $G_0$ be a transitive graph, and let $\sA$ be an automorphism-invariant set of subgraphs of $G_0$, \ca{that is, $\gamma \sA = \sA$ for any automorphism $\gamma$} of $G_0$.
Fix an arbitrary vertex $v_0$ of $G_0$ and let
\[ \sA' = \left\{(G,v,\omega) : \begin{minipage}{0.495\linewidth}
$\exists \text{ an isomorphism } \phi:(G,v) \to (G_0,v_0)\\
 \text{such that } \phi(K_\omega(v))\in\sA$
 \end{minipage}\right\}\]
Then $\sA'$ is a component property such that $(G_0,v_0,\omega)\in\sA'$ if and only if $K_\omega(v_0)\in\sA$. \ca{Thus, \cref{mainthmsimple} follows from \cref{mainthmsimple} and similarly Theorems \ref{1inftycomponentssimple}, \ref{T:Longrangedisordersimple} and \ref{T:everyFUSFtreeistransientsimple} follow from Theorems \ref{1inftycomponents}, \ref{T:Longrangedisorder} and \ref{T:everyFUSFtreeistransient}, respectively.}
\end{example}


\begin{example}[Intrinsic properties] A graph $H$ is said to have \textbf{volume-growth dimension $d$} if
\[ |B_H(v,r)| = r^{d+o(1)} \]
\ca{for any (and hence all) vertices $v$ of $H$.} Let $p_n(\cdot,\cdot)$ denote the $n$-step transition probabilities of simple random walk on $H$. We say that $H$ has \textbf{spectral dimension} $d$ if
\[ p_n(v,v) = n^{-d/2+o(1)}\]
for any (and hence all) vertices $v$ of $H$. \ca{Lastly, recall that the \textbf{critical percolation probability} $p_c(H)$ of an infinite connected graph $H$ is the supremum over $p\in [0,1]$ such that independent percolation with edge probability $p$ a.s.~does not exhibit an infinite cluster.} Then, under the hypotheses of \cref{c:ergodicity}, the volume-growth, spectral dimension of $T_\F(\rho)$ (if they exist) \ca{and value of $p_c$} are non-random, and consequently are a.s.~the same for every tree in $\F$.
\end{example}

Component properties can be `extrinsic' and depend upon how the component sits inside of the base graph $G$.
\begin{example}[Extrinsic properties] A \ca{subgraph} $H$ of \ca{$G$} is said to have \textbf{discrete Hausdorff dimension} $\alpha$ if
\[ |B_G(v,n) \cap H| = n^{\alpha+o(1)} , \]
\ca{for any (and hence all) vertices $v$ of $G$}. The event that a component has a particular discrete Hausdorff dimension is a component property, and consequently \cref{mainthm} implies that all components of the USF in a \ca{\URN} have the same discrete Hausdorff dimension (if this dimension exists). In fact, the discrete Hausdorff dimension of every component of the USF in $\Z^d$ was proven to be $4$ for all $d\geq4$ by Benjamini, Kesten, Peres and Schramm~\cite{BeKePeSc04}.
\end{example}

Even for unimodular transitive graphs, the conclusion of \cref{mainthm} is strictly stronger than that of \cref{mainthmsimple}. This is because a component property can also depend on the whole configuration, as the following example demonstrates.
\begin{example}
Define $N(v,\omega,r)$ to be the number of distinct components of $\omega$ that are adjacent to the ball $B_{\omega}(v,r)$ of radius $r$ about $v$ in the intrinsic distance on $K_\omega(\rho)$. Asymptotic statements about the growth of $N(v,\omega,r)$, can be used to define component properties that depend on the entire configuration $\omega$, e.g.
\[ \sA = \{ (G,v,\omega) :  N(v,\omega,r) = r^{\beta+o(1)} \}. \]
\end{example}

\ct{All of the examples above are what we call {\em tail} properties. That is, these properties can be verified by looking at all but finitely many edges of both the component and of the configuration (see the next section for the precise definition). Let us give now an interesting example of a non-tail property.}

%

\begin{example}[Non-tail property] The component property
\[\sA(n)= \left\{(G,v,\omega) : \text{\begin{minipage}{0.65\textwidth}
for each connected component $K$ of $\omega$, there exists a path $\langle e_i \rangle$ in $G$ connecting $K_\omega(v)$ to $K$ such that at most $n$ of the $e_i$\ca{'s} are not contained in $\omega$.
 \end{minipage}}\right\} \]
is not a tail property. Benjamini, Kesten, Peres and Schramm \cite{BeKePeSc04} proved the remarkable result that the property $\sA(n) \setminus \sA(n-1)$ holds a.s.~for every component of the USF of $\Z^d$ if and only if $4(n-1)< d\leq 4n$.
\end{example}

\subsubsection{Tail properties}
\begin{defn}
We say that a component property $\A$ is a \textbf{tail component property} if
\[ (G,v,\omega)\in \A \Longrightarrow (G,v,\omega')\in \sA \,\,\,
\begin{array}{l}
\forall \omega'\subseteq E(G) \text{ such that } \omega \symdif \omega' \text{ and }\\
 K_\omega(v)\symdif K_{\omega'}(v) \text{ are both finite.}
\end{array}
\]
\end{defn}


Indistinguishability of USF components by properties that are not tail can fail without the assumption of unimodularity -- see~\cite[Remark~3.16]{LS99} and \cite[Example~3.1]{BLPS99}. However, our next theorem shows that unimodularity is not necessary for indistinguishability of WUSF components by tail properties when the WUSF components are a.s.~one-ended. In \cite{LMS08} it is shown that the last condition holds in every transient transitive graph. The following theorem, which is used in the proof of \cref{mainthm}, implies that WUSF components are indistinguishable by tail properties in any transient transitive graph (not necessarily unimodular).

\begin{thm}\label{tailproperty} Let $(G,\rho)$ be a stationary random network and let $\F$ be a sample of $\WUSF_G$. Suppose that every component of $\F$ is one-ended almost surely. Then for every tail component property $\A$, either every connected component of $\F$ has property $\A$ or none of the connected components of $\F$ have property $\sA$ almost surely.
\end{thm}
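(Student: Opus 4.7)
The plan is to reduce the problem to showing that $p := \mathbb{P}(T_\F(\rho) \in \sA) \in \{0,1\}$ in the ergodic case, and then establish this dichotomy via Wilson's algorithm rooted at infinity together with the tail triviality of the WUSF (Theorem~8.3 of \cite{BLPS}). By the standard ergodic decomposition we may assume the stationary process $(G,X_n,\F)_{n \geq 0}$ is ergodic, where $(X_n)$ is the random walk from $\rho$. In this ergodic setting, the shift-invariant events $\{\exists v : T_\F(v) \in \sA\}$ and $\{\exists v : T_\F(v) \notin \sA\}$ each have probability $0$ or $1$, so the theorem reduces to ruling out their simultaneous occurrence, which is delivered by the dichotomy $p \in \{0,1\}$ combined with stationarity (which implies, via the ergodic theorem applied to the stationary sequence $\mathbf{1}[T_\F(X_n) \in \sA]$, that both types of components must be hit by the walk if both exist).

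To establish $p \in \{0,1\}$, I would couple $\F$ with the walk $X$ through Wilson's algorithm rooted at infinity, so that the unique infinite ray $\gamma_\rho$ in $T_\F(\rho)$ (well-defined by the one-endedness hypothesis) is the loop-erasure of $X$. Crucially, one-endedness forces the past of every vertex in its component to be finite, so $T_\F(\rho)$ modulo finite modifications is determined by the asymptotic structure of $\F$ along and around $\gamma_\rho$. For each finite $K \subset V(G)$ containing a large enough initial segment of $\gamma_\rho$, the spatial Markov property of the WUSF says that, conditional on $\F \cap E(K)$, the configuration $\F \cap E(V \setminus K)$ is distributed as $\WUSF$ on an appropriately modified network. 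Since $\sA$ is a tail component property, the event $\{T_\F(\rho) \in \sA\}$ is insensitive to modifications of $T_\F(\rho)$ inside $K$, so it may be expressed as a tail event of this conditional $\WUSF$; invoking tail triviality of the $\WUSF$ on the modified network forces the conditional probability to be $0$ or $1$, and exhausting $K \uparrow V(G)$ yields $p \in \{0,1\}$.

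The main obstacle is making the tail-encoding step rigorous. The event $\{T_\F(\rho) \in \sA\}$ depends globally on $\F$: modifying $\F$-edges in a finite region can split a component into infinite pieces or merge distinct components, so the event is not a priori in the usual tail $\sigma$-algebra of $\F$ alone. To handle this, one must leverage both extra inputs in concert: one-endedness ensures that removing any single $\F$-edge splits $T_\F(\rho)$ into a finite piece and an infinite piece (so component changes induced by finite edge modifications remain finite), while the Wilson coupling pins down the ``end'' of $T_\F(\rho)$ via the tail of the walk. Carefully combining these controls along an exhaustion $K_n \uparrow V(G)$ is where the technical work lies.
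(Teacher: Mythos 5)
Your high-level plan is in the right spirit — Wilson's algorithm, one-endedness, the spatial Markov property, and an exhaustion of the graph all appear in the paper's proof — but the key technical step you propose does not go through, and the paper takes a genuinely more delicate route to close exactly the gap you would hit.

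The problematic step is the claim that, after conditioning on $\F\cap E(K)$, the event $\{T_\F(\rho)\in\sA\}$ ``may be expressed as a tail event of the conditional $\WUSF$'' and then settled by tail triviality. This is false as stated. The definition of a tail component property requires \emph{both} $\omega\symdif\omega'$ \emph{and} $K_\omega(\rho)\symdif K_{\omega'}(\rho)$ to be finite; the second condition is what fails. Modifying a single edge outside $K$ can change $T_\F(\rho)$ by an infinite amount: adding an edge far from $\rho$ that joins $T_\F(\rho)$ to another infinite component changes $K_\omega(\rho)$ by an infinite set, and removing an edge $e$ of $T_\F(\rho)$ with $\rho$ on the finite side of $T_\F(\rho)\setminus e$ replaces an infinite component by a finite one. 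In neither case does the tail-property definition tell you the type is preserved. Consequently $\{T_\F(\rho)\in\sA\}$ is \emph{not} measurable with respect to $\bigcap_R\sigma(\F\!\upharpoonright_{E\setminus E(B_G(\rho,R))})$, and tail triviality of $\WUSF_{\widehat G}$ cannot be invoked for it. (There is also a secondary issue: choosing $K$ to ``contain a large enough initial segment of $\gamma_\rho$'' conditions on information about $\F$ outside $K$, so the conditional law of $\F\cap E(V\setminus K)$ is no longer a plain $\WUSF_{\widehat G}$ and the spatial Markov property does not apply cleanly.)

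The paper's proof does not establish tail-measurability and does not invoke tail triviality. Instead it builds the required $0$--$1$ law from scratch. After conditioning on $\sG_r=\sigma((G,\rho),\F\cap B_G(\rho,r))$, it introduces the auxiliary forest $\F_R$ (the union of futures in $\F$ of all vertices outside $B_G(\rho,R)$), which by one-endedness satisfies $\bigcap_R\F_R=\emptyset$. On the event that $\F_R$ misses $B_G(\rho,n)$, Wilson's algorithm on the contracted network $\widehat G$ shows that whether $T_\F(\rho)\in\sA$ is determined by where the walk from $\rho$ first hits $\F_R$ — and here the tail-property hypothesis enters in the intended way, because $\F\setminus\F_R$ and $T_\F(v)\setminus T_{\F_R}(v)$ are finite. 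Because this hitting event is unchanged if the walk is started $j$ steps later (for $j<n-r$), one gets $\P(T_\F(\rho)\in\sA\mid\sG_r)=\P(T_\F(\widehat X_j)\in\sA\mid\sG_r)$. Combining this with the ergodic theorem for the stationary sequence $\langle(G,X_n,\F)\rangle_{n\geq0}$, a L\'evy $0$--$1$ law argument to show the limiting frequency is a function of $(G,\rho,\F)$, and a coupling of the tails of the walks on $G$ and $\widehat G$, yields that $\{T_\F(\rho)\in\sA\}$ is independent of $\sG_r$ given $(G,\rho)$ for every $r$, whence it has conditional probability $0$ or $1$. That chain of ideas is what your proposal's ``exhaust $K\uparrow V$ and invoke tail triviality'' is silently substituting for, and it cannot be replaced by a direct appeal to tail triviality.
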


\ca{\subsubsection{Sharpness}}

\ca{
We present a construction showing that the condition $\E[c(\rho)]<\infty$ in \cref{mainthm} is indeed necessary. For integers $n$ and $k>2$, denote by $T_n(k)$ the finite network on a binary tree of height $n$ such that edges at distance $h$ from the leaves have conductance $k^h$. Choose a uniform random root in $T_n(k)$ and take $n$ to $\infty$ while keeping $k$ fixed. The limit of this process can be seen to be the transient \URN $T(k)$ in which the underlying graph is the canopy tree \ct{\cite{AizWar06}} and edges of distance $h$ from the leaves have conductance $k^h$.

Consider the finite network $G_n$ obtained by gluing a copy of $T_n(3)$ and a copy of $T_n(4)$ at their leaves \ct{in such a way that the resulting network $G_n$ is planar}, and let $\rho_n$ a uniformly chosen root vertex of $G_n$. Then the randomly rooted graphs $(G_n, \rho_n)$ converge to a \URN which is formed by gluing a copy of $T(3)$ and a copy of $T(4)$ at their leaves. It can easily be seen via Wilson's algorithm (see \cref{S:wusfindist}) that the WUSF will contain precisely two one-ended components corresponding to the two infinite rays of $T(3)$ and $T(4)$. These two trees are clearly distinguishable from each other by measuring the frequency of edges with conductances $3$ or $4$ on their infinite ray. This example can be made into a graph rather than a network simply by replacing an edge with conductance $k^h$ by $k^h$ parallel edges.

 }
 \ct{It is also possible to construct a unimodular random rooted network on which there are infinitely many WUSF components almost surely and every cluster is distinguishable from every other cluster. Let $G$ be a 3-regular tree, and let $\F_1$ be a sample of $\WUSF_T$. For every component $T$ of $\F_1$, let $U(T)$ be i.i.d. uniform $[0,1]$. For each edge $e$ of $G$ that is contained in $\F_1$, let $T(e)$ be the component of $\F_1$ containing $e$. Define conductances on $G$ by, for each edge $e$ of $G$, setting $c(e)=1$ if $e\notin\F_1$ and otherwise setting
 \[ c(e) = \exp\left((1+U(T))\times|\text{The finite component of }T(e)\setminus e|\right) \, .\]
 These strong drifts ensure that a random walk on the network $(G,c)$ will eventually remain in a single component of $\F_1$.  Running Wilson's algorithm on the network $(G,c)$ to sample a copy $\F_2$ of $\WUSF_{(G,c)}$, we see that the components of $\F_2$ correspond to the components of $\F_1$. We can distinguish these components from each other by observing the rate of growth of the conductances along a ray in each component.}
\medskip

\section{Indistinguishability of FUSF components}\label{S:FUSFindist}

\ca{Our goal in this section is to prove Theorem \ref{mainthm} for the FUSF on a \URN $(G,\rho)$ when the measures $\FUSF_G$ and $\WUSF_G$ are distinct.}

\subsection{Cycle breaking in the FUSF}\label{S:cyclebreakingintheFUSF}

Let $G$ be a finite network. For each spanning tree $t$ of $G$ and oriented edge $e$ of $G$ that is not a self-loop, we define the \textbf{direction} $D(e)=D(t,e)$ to be the first edge in the unique simple path from $e^-$ to $e^+$ in $t$.

\begin{lem} Let $G$ be an infinite network with exhaustion $\langle V_n \rangle_{n\geq1}$ and let $T_n$ be a sample of $\UST_{G_n}$ for each $n$. Then for every oriented edge $e$ of $G$, the random variables $(T_n,D(T_n,e))$ converge in distribution to some limit $(\F,D(e))$, where $D(e)$ is an edge adjacent to $e^-$ and the marginal distribution of $\F$ is given by $\FUSF_G$.
\end{lem}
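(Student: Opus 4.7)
The plan is to establish joint convergence in distribution by combining compactness with an explicit computation based on the spatial Markov property and transfer currents. Since $G$ is locally finite, the set of oriented edges with tail $e^-$ is finite, and the space of subgraphs of $G$ equipped with the local topology is compact; hence the laws of $(T_n, D(T_n, e))$ form a tight sequence on a compact product space, and by Prokhorov's theorem every subsequence admits a weakly convergent sub-subsequence. Any such subsequential limit has first-coordinate marginal $\FUSF_G$ by the defining convergence $\UST_{G_n} \Rightarrow \FUSF_G$, so what remains is to show that the joint distribution is uniquely determined by the full sequence.

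For marginal convergence of $D(T_n, e)$, I would use Wilson's algorithm rooted at $e^+$: under this algorithm, the path from $e^-$ to $e^+$ in $T_n$ has the same distribution as the loop-erased random walk from $e^-$ to $e^+$ in $G_n$, so $D(T_n, e)$ has the same distribution as the first edge of this loop-erased walk. A strong-Markov calculation at $e^-$ expresses $\P(D(T_n, e) = f)$ in terms of escape probabilities and the effective resistance between $e^-$ and $e^+$ in $G_n$; equivalently, this probability equals the current carried by the oriented edge $f$ when a unit current is injected at $e^-$ and extracted at $e^+$ in $G_n$. These potential-theoretic quantities converge to their free analogs in $G$ as $n \to \infty$ (for instance, by Dirichlet's principle and the monotonicity of free effective resistances along the exhaustion), yielding marginal convergence of $D(T_n, e)$.

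For joint convergence with $T_n$, I would fix a cylinder event $A$ depending on edges within a ball $B = B_G(e^-, R)$ containing $e$ and $f$, and decompose according to the restriction $t$ of $T_n$ to the edges inside $B$:
\[
\P(T_n \in A,\, D(T_n, e) = f) = \sum_t \mathbf{1}_{\{t \in A\}}\, \P(T_n|_B = t)\, \P\bigl(D(T_n, e) = f \bigm| T_n|_B = t\bigr),
\]
where $T_n|_B$ denotes the restriction of $T_n$ to the edges of $B$. If $e^-$ and $e^+$ lie in the same component of $t$, then $D(T_n, e)$ is a function of $t$ and the summand converges by the defining weak convergence. Otherwise, the spatial Markov property identifies the conditional law of $T_n$ outside $B$ as $\UST_{\hat G_n}$ on the modified network $\hat G_n = (G_n - H)/F$ with $H$ the set of edges of $B$ absent from $t$ and $F = t$; the networks $\hat G_n$ exhaust $\hat G = (G - H)/F$, so $\UST_{\hat G_n} \Rightarrow \FUSF_{\hat G}$. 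Under the contraction, the event $\{D(T_n, e) = f\}$ translates into a condition on the first edge of the path between the super-vertices representing the components of $e^-$ and $e^+$ in $t$; applying the current identification of the previous paragraph now to the outer UST on $\hat G_n$ expresses this conditional probability as a sum of currents in $\hat G_n$, which again converges.

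The hardest step is the bookkeeping of how the direction event transforms under contraction and deletion: for each first edge $f'$ from the super-vertex $v_1$ in the outer UST, one must identify the corresponding edge of $G_n$ in the original tree, trace where it enters the component of $e^-$ in $t$, and thereby recover the first edge of the path from $e^-$ inside $t$. A tree argument shows that the path from $e^-$ to $e^+$ in $T_n$ exits the component of $e^-$ in $t$ exactly once, which is what makes the translation well-defined. Once this is in place, the relevant currents in $\hat G_n$ converge to those in $\hat G$ by the same potential-theoretic argument as in the second paragraph, so each of the finitely many summands converges; summing yields joint convergence of $\P(T_n \in A, D(T_n, e) = f)$ for every cylinder $A$ and every $f$, which completes the proof.
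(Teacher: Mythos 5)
Your proof is correct and follows essentially the same route as the paper: condition on a cylinder event via the spatial Markov property, identify the law of the first step of the $e^-\to e^+$ path in the (contracted/deleted) UST with the unit current flow, and pass to the limit using convergence of free currents along the exhaustion. The paper conditions on arbitrary finite sets $F\subseteq T_n$, $H\cap T_n=\emptyset$ rather than on the full restriction to a ball, and cites Kirchhoff's formula directly rather than passing through Wilson's algorithm and the LERW, but these are presentational differences rather than a genuinely distinct argument.
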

\begin{proof}

 Since the distribution of $T_n$ converges  to $\FUSF_G$, it suffices to show that the conditional probabilities
\be \label{toconverge}
\P(D(T_n,e)=d \,|\,  f_1,\ldots,f_k \in T_n,\, h_1,\ldots h_l \notin T_n)
\ee
converge, where $d=(d^-,d^+)$ is any oriented edge with $d^-=e^-$ and $F=\{f_1,\ldots,f_k\}$ and $H=\{h_1,\ldots,h_l\}$ are any two finite collections of edges in $G$ for which the event $\mathscr{A}=\{f_1,\ldots,f_k \in \F,\, g_1,\ldots g_l \notin \F\}$ has non-zero probability. Fix such $d, F$ and $H$. If $F$ includes a path from $e^-$ to $e^+$ then $D(T_n,e)$ is determined and there is convergence in (\ref{toconverge}). Also, if $d \in H$ then (\ref{toconverge}) is zero. So let us assume now that neither is the case.

Let us first explain why convergence holds in (\ref{toconverge}) when $F=H=\emptyset$. In that case, by Kirchhoff's effective resistance formula (see \cite[Theorem 4.1]{BLPS}), the probability that the unique path between $e^-$ to $e^+$ in $T_n$ goes through $d$ equals the amount of current on the edge $d$ when a unit current flows from $e^-$ to $e^+$ in the network $G_n$. It is well known that this quantity converges as $n\to \infty$ to the current passing through $d$ in  the \emph{free unit current flow} from $e^-$ to $e^+$ in $G$ \cite[Proposition 9.1]{LP:book}.

When $F$ and $H$ are non-empty, we take $n$ to be sufficiently large such that the edges $e,d$ and all the edges of $F$ and $H$ are contained in $G_n$, and that the event $\mathscr{A}_n=\{F \subset T_n,\, H \cap T_n = \emptyset\}$ has non-zero probability (i.e, that $G_n \setminus H$ is connected and there are no cycles in $F$). We write $(G_n-H)/F$ for the network formed from $G_n$ by deleting each edge $h \in H$ and contracting each edge $f \in F$. By the Markov property (see \cref{S:USFbackground}), the laws of $T_n$ conditioned on $\A_n$ and of $\F$ conditioned on $\sA$ can be sampled from by taking the union of $F$ with a sample of the UST of $(G_n-H)/F$ or the FUSF of  $(G-H)/F$ respectively.
 Thus, the same argument as above works when $d \not \in F$ and shows that the limit of (\ref{toconverge}) is equal to the current passing through $d$ in the free unit current flow from $e^-$ to $e^+$ in $(G-H)/F$.

Finally suppose that $d\in F$. Let $V_d$ be the set of vertices connected to $e^-$ by a simple path in $F$ passing through $d$, and let $E_d$ be the set of oriented edges with tail in $V_d$. By our previous discussion, (\ref{toconverge}) equals the sum of the currents flowing through the edges of $E_d$ in the unit current flow from $e^-$ to $e^+$ in $(G_n - H)/F$. As before, \cite[Proposition 9.1]{LP:book} shows that this quantity converges to the corresponding sum of currents in the free unit current flow from $e^-$ to $e^+$ in $(G-H)/F$. \qedhere

\end{proof}


For each oriented edge $e$ of $G$ and $\FUSF_G$-a.e.~spanning forest $f$ of $G$, we define the \textbf{update} $U(f,e)$ as follows.
If $e$ is either a self-loop or already contained in $f$, set $U(f,e)=f$. Otherwise, sample $D(e)$ from its conditional distribution given $\F=f$, and set $U(f,e)=\F\cup\{e\}\setminus D(e)$. It seems likely that this conditional distribution is concentrated on a point.
\begin{question} Let $G$ be a network and let $e$ be an edge of $G$. Does $U(f,e)$ coincide $\FUSF_G$-a.e. with some measurable function of $f$?
\end{question}
 If any additional randomness is required to perform an update, it will always be taken to be independent of any other random variables considered.

\begin{lem}\label{Lem:FUSfupdatestationarity} Let $G$ be a network and $\F$ be a sample of $\FUSF_G$. Let $v$ be a vertex of $G$ and let $E$ be an element of the set $\{e: e^-=v \}$ chosen independently of $\F$ and with probability proportional to its conductance.
Then
$U(\F,E)$ and $\F$ have the same distribution.  \end{lem}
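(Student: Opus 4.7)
The plan is to establish the lemma first for finite networks by a short detailed-balance calculation, and then pass to the limit along an exhaustion, leveraging the joint convergence $(T_n, D(T_n, e)) \Rightarrow (\F, D(e))$ provided by the previous lemma.

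\emph{Finite case.} When $G$ is finite, the update defines a Markov kernel on spanning trees of $G$: sample $T\sim\UST_G$, then sample $E$ independently with probability $c(E)/c(v)$ among edges emanating from $v$, and transition to $U(T,E)$. I would verify that $\UST_G$ is stationary for this kernel by checking detailed balance on pairs $T\neq T'$ related by such a swap. The key observation is an involution: if $e\notin T$ is not a self-loop, $d:=D(T,e)$, and $T':=T\cup\{e\}\setminus\{d\}$, then the unique cycle in $T\cup\{e\}$ coincides with the unique cycle in $T'\cup\{d\}$; walking this cycle from $v$ in $T'$ must avoid the removed edge $d$ and therefore starts with $e$, giving $D(T',d)=e$ and hence $U(T',d)=T$. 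Combined with the weight ratio $\UST_G(T)/\UST_G(T')=c(d)/c(e)$ (since $T$ and $T'$ differ in exactly the two edges $d$ and $e$) and the transition probabilities $c(e)/c(v)$ and $c(d)/c(v)$, detailed balance is immediate, and hence $U(T,E)\eqd T$.

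\emph{Infinite case.} I would fix an exhaustion $\langle V_n\rangle$ of $G$ with $v\in V_1$, let $T_n\sim\UST_{G_n}$, and let $E$ be sampled independently as in the statement. For $n$ large enough that $G_n$ contains $v$ and all its neighbors, the finite case gives $U(T_n,E)\eqd T_n$. Since $E$ takes only finitely many values, the previous lemma (applied conditionally on each value of $E$) yields the joint convergence in distribution $(T_n, D(T_n,E)) \Rightarrow (\F, D(E))$, where $\F\sim\FUSF_G$ and $D(E)$ is sampled from the conditional distribution specified in the definition of the update. The local swap $(f,e,d)\mapsto f\cup\{e\}\setminus\{d\}$ (with the convention that the output is $f$ when $e$ is a self-loop or $e\in f$) is continuous in the local topology on edge-marked graphs, so $U(T_n,E)\Rightarrow U(\F,E)$. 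Since $T_n\Rightarrow\F$ as well, I conclude that $U(\F,E)\eqd\F$.

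\emph{Main obstacle.} The delicate point is the passage to the limit: because $D(T_n,E)$ is itself a random edge depending on $T_n$, joint convergence of the pair $(T_n, D(T_n,E))$---not just marginal convergence of $T_n$---is essential for the finite-case stationarity to survive the weak limit. The previous lemma is specifically engineered to supply this joint convergence, so the remaining work reduces to noting that the local swap is continuous because the modification is confined to edges at $v$. A minor companion point is that, since the conditional distribution of $D(E)$ given $\F$ need not be concentrated on a point (cf.\ the Question preceding the lemma), the statement $U(\F,E)\eqd\F$ must be interpreted with the joint sampling $(\F,D(E))$ specified by the limiting law, which is exactly what the above argument delivers.
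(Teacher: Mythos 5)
Your proposal is correct and follows essentially the same route as the paper: reduce to finite networks where update stationarity is a detailed-balance check, and then pass to the limit along an exhaustion using the joint convergence $(T_n,D(T_n,e))\Rightarrow(\F,D(e))$ from the preceding lemma together with continuity of the local swap. The only cosmetic difference is that you carry out the detailed-balance involution (showing $D(T',d)=e$ and matching the weight ratio $c(d)/c(e)$ against the transition rates $c(e)/c(v)$ and $c(d)/c(v)$) explicitly, whereas the paper outsources this computation to \cite[Lemma 6]{H15}.
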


\begin{proof} Let $\langle V_n \rangle_{n\geq0}$ be an exhaustion of $G$ and let $T_n$ be a sample of the UST on $G_n$ for each $n$.
We may assume that $G_n$ contains $v$ and every edge adjacent to $v$ for all $n\geq 1$.
We define the update $U(t,e)$ of a spanning tree $t$ of $G_n$ at the oriented edge $e$ to be
\[ U(t,e) = t\cup\{e\}\setminus D(t,e).\]
 Since $U(T_n,E)$ converges to $U(\F,E)$ in distribution, and so it suffices to verify that $U(T_n,E) \eqd T_n$ for each $n\geq0$: this may be done by checking that $\UST_{G_n}$ satisfies the detailed balance equations for the Markov chain on the set of spanning trees of $G$ with transition probabilities
\[p(t_1,t_2)=\frac{1}{c(v)}c(\{{e} : {e^-}=v \text{ and } U({t_{{1}}},{e})={t_{{2}}}\}). \]
This simple calculation is carried out in \cite[Lemma 6]{H15}. \qedhere

\end{proof}

This has the following immediate consequence.

\begin{corollary}[Update Tolerance for the $\FUSF$]\label{updatetol} Let $G$ be a network. Fix an edge $e$, and let $\FUSF_G^e$ denote the joint distribution of a sample $\F$ of $\FUSF_G$ and of the update $U(\F,e)$. Then
\[ \FUSF_G(\F \in \A) \geq \frac{c(e)}{c(e^-)}\FUSF_G^e(U(\F,e) \in \A)\]
\end{corollary}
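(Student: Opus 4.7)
The plan is to derive the inequality as an immediate consequence of \cref{Lem:FUSfupdatestationarity}. Set $v=e^-$. By \cref{Lem:FUSfupdatestationarity}, if $E$ is a random oriented edge emanating from $v$ with $\P(E=e')=c(e')/c(v)$, chosen independently of $\F$, then $U(\F,E) \eqd \F$. In particular, applied to the event $\{\cdot \in \A\}$,
\[ \FUSF_G(\F \in \A) = \P\bigl(U(\F,E) \in \A\bigr) = \sum_{e': (e')^-=v} \frac{c(e')}{c(v)}\,\P\bigl(U(\F,e') \in \A\bigr). \]

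Next I would simply drop all terms in the sum other than the one corresponding to the fixed edge $e$, which is legitimate since the summands are nonnegative. This yields
\[ \FUSF_G(\F \in \A) \geq \frac{c(e)}{c(v)}\,\P\bigl(U(\F,e) \in \A\bigr) = \frac{c(e)}{c(e^-)}\,\FUSF_G^e\bigl(U(\F,e) \in \A\bigr), \]
where the final equality is the definition of the joint law $\FUSF_G^e$. This is exactly the claimed inequality.

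There is essentially no obstacle here: all of the work has been front-loaded into \cref{Lem:FUSfupdatestationarity}, which in turn rests on the detailed balance calculation of \cite[Lemma 6]{H15} together with weak convergence. The only mild subtlety worth flagging is the measurability of the event $\{U(\F,e)\in\A\}$ under the joint law, which is immediate from the construction of $U(\F,e)$ via sampling $D(e)$ from its (regular) conditional distribution given $\F$, together with additional independent randomness if needed, as stipulated in the paragraph preceding \cref{Lem:FUSfupdatestationarity}.
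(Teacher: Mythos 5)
Your proof is correct and is essentially identical to the paper's: both invoke \cref{Lem:FUSfupdatestationarity}, expand $\P(U(\F,E)\in\A)$ as the conductance-weighted sum over edges emanating from $e^-$, and discard all but the term for $e$.
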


\begin{proof}
\cref{Lem:FUSfupdatestationarity} implies that
\begin{align*} \FUSF_G(\F \in \A) &= \frac{1}{c(e^-)}\sum_{\hat e^-=e^-}c(\hat e)\FUSF_G^{\hat e}(U(\F,\hat e) \in \A)\\
& \geq \frac{c(e)}{c(e^-)}\FUSF_G^e(U(\F,e) \in \A). \qedhere \end{align*}
\end{proof}


\subsection{All FUSF components are transient and infinitely-ended}

A \textbf{weighted tree} is a network whose underlying graph is a tree. Recall that a branch of an infinite tree $T$ is an infinite connected component of $T \setminus v$ for some vertex $v$.
\begin{lemma}\label{nobranches} Let $(T,\rho)$ be a unimodular random rooted weighted tree that is transient with positive probability. On the event that $T$ is transient, $T$ a.s.~does not have any recurrent branches.
\end{lemma}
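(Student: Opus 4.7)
The plan is to argue by contradiction. First I would reduce to the case where $T$ is almost surely transient; this is legitimate because transience is a root-invariant event, so conditioning on it preserves unimodularity. By applying the mass transport principle over distance-$n$ spheres around $\rho$ to the indicator that ``$u$ has a recurrent branch'', I reduce the hypothesis ``some vertex of $T$ has a recurrent branch with positive probability'' to ``$\rho$ itself has a recurrent branch with positive probability'', which I henceforth assume. For each vertex $u$ and neighbor $v$ of $u$, write $B_u^{(v)}$ for the connected component of $T \setminus u$ containing $v$, and let $N(u)$ be the number of neighbors $v$ of $u$ for which $B_u^{(v)}$ is infinite and recurrent.

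The central computation is the mass transport principle applied to $f(T, u, v) = \mathbf{1}\!\left[v \sim u,\ B_u^{(v)} \text{ is infinite and recurrent}\right]$, which yields $\E[N(\rho)] = \E[M(\rho)]$, where $M(u) := |\{v \sim u : B_v^{(u)} \text{ is infinite and recurrent}\}|$. Using the tree decomposition $B_v^{(u)} = \{u\} \cup \bigcup_{w \sim u,\, w \neq v} B_u^{(w)}$ together with the fact that transience of $T$ forbids $N(u) = \deg(u)$ (which would force $T$ itself to be recurrent), a short case analysis gives $M(u) = 1$ when $u$ has exactly one transient branch (i.e., $N(u) = \deg(u) - 1$ with $\deg(u) \geq 2$) and $M(u) = 0$ otherwise. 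Combining this with the trivial inequalities $\E[M(\rho)] = \P(N(\rho) = \deg(\rho) - 1,\, \deg(\rho) \ge 2) \le \P(N(\rho) \ge 1) \le \E[N(\rho)]$ and the mass-transport equality forces all four quantities to coincide, yielding $N(\rho) \in \{0, 1\}$ almost surely and $\{N(\rho) = 1\} \subseteq \{\deg(\rho) = 2\}$ almost surely. A further shell-based mass-transport argument upgrades this structural constraint to hold simultaneously at every vertex of $T$ almost surely.

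On the positive-probability event $\{N(\rho) = 1\}$, let $v_1$ be the recurrent neighbor and $v_2$ the transient neighbor of $\rho$. Then $B_{v_1}^{(\rho)} = \{\rho\} \cup B_\rho^{(v_2)}$ is transient, while every branch $B_{v_1}^{(x)}$ with $x \neq \rho$ is a subgraph of the recurrent branch $B_\rho^{(v_1)}$ and so is itself recurrent; this gives $N(v_1) \ge 1$, forcing $\deg(v_1) = 2$. A symmetric argument gives $\deg(v_2) = 2$. Iterating this forward and backward produces a bi-infinite chain of degree-$2$ vertices in $T$; since $T$ is a tree and every vertex on this chain has degree exactly $2$, the chain exhausts $T$, so $T = \Z$.

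It remains to rule out a unimodular random rooted $(\Z, 0)$ whose random walk is transient. A direct mass-transport computation forces the conductance sequence $(c_k)_{k \in \Z}$ to be stationary under the shift; any stationary sequence of strictly positive finite conductances then satisfies $\sum_{k \ge 0} 1/c_k = \infty$ almost surely, for otherwise $1/c_k \to 0$ almost surely, which by stationarity would force $c_0 = \infty$ almost surely, contradicting $c_0 < \infty$. The symmetric statement for $k < 0$ yields recurrence of $(\Z, c)$, contradicting the transience of $T$. The main obstacle is the $M$-versus-$N$ case analysis together with the saturation of the four-term inequality chain that produces the dichotomy $N \in \{0, 1\}$ and $\deg = 2$ whenever $N = 1$; once these structural constraints are in hand, the propagation to $T = \Z$ and the closing stationarity argument are routine.
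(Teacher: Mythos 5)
Your approach is genuinely different from the paper's: the paper defines, for each vertex $u$, the set $V(u)$ of \emph{all} vertices whose removal leaves $u$ in a recurrent component, shows $V(u)$ is a finite path or transient ray, and obtains a direct contradiction by transporting unit mass from each $u$ to an endpoint of $V(u)$ so that a single vertex receives infinite mass. You instead count neighbours at each vertex and try to derive a rigid structure theorem ($T\cong\Z$) before closing with a stationarity argument. Unfortunately your structural step contains a genuine error.

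The problem is the case analysis for $M(u)$. Write $k(u)$, $m(u)$, $f(u)$ for the numbers of transient, infinite recurrent, and finite branches at $u$, so $\deg(u)=k+m+f$ and $N(u)=m(u)$. Since a tree rooted at $u$ is recurrent iff \emph{every} branch at $u$ (including finite ones, which are trivially recurrent) is recurrent, $B_v^{(u)}$ is recurrent iff all branches $B_u^{(w)}$ with $w\ne v$ are recurrent; and it is infinite iff at least one such branch is infinite. Transience of $T$ forces $k(u)\ge 1$, and one finds
\[
M(u)=\mathbbm{1}\bigl[\,k(u)=1\text{ and }m(u)\ge 1\,\bigr],
\]
with \emph{no constraint on $f(u)$}. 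Your claimed equivalence $M(u)=1\iff N(u)=\deg(u)-1$ amounts to $k(u)=1$ \emph{and} $f(u)=0$, which is strictly stronger; it silently assumes there are no finite branches. Running your sandwich $\E[M(\rho)]=\P(k=1,\,m\ge1)\le\P(m\ge1)\le\E[N(\rho)]=\E[M(\rho)]$ with the corrected formula still yields $N(\rho)\le1$ a.s.\ and $\{N(\rho)\ge1\}\subseteq\{k(\rho)=1\}$ a.s., but it does \emph{not} give $\deg(\rho)=2$. Consequently the chain argument only produces a bi-infinite line $L$ in $T$ carrying a consistent recurrent/transient orientation, with arbitrary finite trees possibly hanging off each vertex of $L$; the conclusion $T=\Z$ does not follow, and the closing argument (stationarity of the conductance sequence on $\Z$ and the resulting recurrence) is specific to the line and does not apply. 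To finish you would have to rule out a transient unimodular ``line plus finite decorations'' tree, which is essentially the original difficulty all over again. The paper's proof avoids this entirely by transporting to the endpoint of $V(u)$ rather than trying to classify the tree, and the presence of finite side-branches never enters.
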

\begin{proof}
For each vertex $u$ of $T$, let $V(u)$ be the set of vertices $v\neq u$ such that the component containing $u$ in $T \setminus v$ is recurrent. We first claim that if $T$ is a transient weighted tree and $u$ is a vertex of $T$, then $V(u)$ is either empty, or a finite simple path starting from \ca{a neighbo\ct{u}r of} $u$ in $T$, or an infinite transient ray (i.e. a ray such that the sum of the edge resistances along the ray is finite) starting from \ca{a neighbo\ct{u}r} of $u$ in $T$.
First, Rayleigh's monotonicity principle implies that if $v\in V(u)$ then every other vertex on the unique path from $u$ to $v$ in $T$ is also in $V(u)$.
Second, if there exist $v_1, v_2 \in V(u)$ which do not lie on a simple path from $u$ in $T$, then the component of $u$ in $T \setminus v_1$ and the component of $u$ in $T\setminus v_2$ are both recurrent and have all of $T$ as their union implying that $T$ is recurrent.

Thus, if $V(u)$ is not empty, it must be a finite path or ray in $T$ starting at $u$.
Let us rule out the case that $V(u)$ is a recurrent infinite ray. Assume that $V(u)$ is infinite and denote this ray by $(v_0,v_1,v_2,\ldots)$ with $v_0=u$. For each integer $n\geq 0$ the component of $T \setminus v_{n+1}$ containing $v_n$ is recurrent and so, by Rayleigh's monotonicity principle, the components of $T \setminus v_n$ that do not contain $v_{n+1}$ are all recurrent. Thus $T$ is decomposed to the union of the ray $(v_0,v_1, \ldots)$ and a collection of recurrent branches hanging on this ray. If the ray $V(u)$ is a recurrent, we conclude that $T$ is recurrent as well.


Suppose for contradiction that with positive probability $T$ is transient but  there exists an edge $e$ such that the component of $\rho$ in $T \setminus e$ is infinite and recurrent. Take $\eps>0$ sufficiently small \ca{so} that this edge $e$ may be taken to have $c(e)\geq\eps$ with positive probability. Denote the event that such an edge exists by $\B_\eps$. We will show that this contradicts the Mass-Transport Principle by exhibiting a mass transport such that every vertex sends a mass of at most one but some vertices receive infinite mass on the event $\B_\eps$.

From each vertex $u$ such that $V(u)$ is finite and non-empty, send mass one to the vertex $v$ in $V(u)$ that is farthest from $u$ in $T$. From each vertex $u$ such that $V(u)$ is a transient ray, send mass one to the end-point $v=e^-$ of the last edge $e$ in the path from $u$ spanned by $V(u)$ such that
 $c(e)\geq \eps$.
If $V(u)$ is empty, $u$ sends no mass.
%
  Clearly every vertex sends a total mass of at most one. However, on the event $\sB_\eps$, the vertex  $v$ that $\rho$ sends mass to receives infinite mass. Indeed, every vertex in the infinite recurrent component of $T \setminus v$ containing $\rho$ sends mass one to $v$, contradicting the
 Mass-Transport Principle.
\end{proof}

\begin{lemma}\label{coexistimpliesbranch} Let $G$ be an infinite network and let $\F$ be a sample of $\FUSF_G$. If with positive probability $\F$ has \ca{a} recurrent component and a transient component with a non-empty core, then with positive probability $\F$ has a component that is a transient tree with a recurrent branch. This holds both when edges of the trees are given the conductances inherited from $G$ and when they are given unit conductances.
\end{lemma}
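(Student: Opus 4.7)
The plan is to apply the update tolerance property of the FUSF (Corollary \ref{updatetol}) to modify the forest at a single edge and produce a component that is a transient tree with a recurrent branch; by absolute continuity of the update, this conclusion will transfer to the original FUSF. Before starting, I would reduce to the case where no transient component of $\F$ already has a recurrent branch, since otherwise the conclusion holds trivially. Under this reduction, at every core vertex $v$ of a transient component $T$, the (at least two) infinite branches of $T$ at $v$ are all transient.

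Next, I would argue that with positive probability there exists an oriented edge $e$ of $G$ such that $e^- = v$ belongs to the core of some transient component $T$ of $\F$ and $e^+ = u$ belongs to some recurrent component $R$ with $R \neq T$. Such an edge is produced by combining the hypothesis with the connectedness of $G$; since there are countably many edges of $G$, some specific fixed edge $e_0$ has this property with positive probability, and we apply update tolerance at $e_0$.

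Now I would analyze the updated forest $\F' = \F \cup \{e\} \setminus D(\F, e)$. The removed edge $D(\F,e)$ lies in $\F$ and is adjacent to $v$, so (since $v \in T$) it lies in $T$; removing it splits $T$ into a piece $T_v$ containing $v$ and a complementary piece $T_{\bar v}$. The inserted edge $e$ joins $T_v$ to $R$, so the updated forest has a component $S := T_v \cup R \cup \{e\}$ (and the separate component $T_{\bar v}$). The piece $T_v$ is transient: the core condition gives at least two infinite subtrees of $T$ at $v$, all transient by our reduction, and removing $D(\F,e)$ deletes at most one of these subtrees, so $T_v$ retains at least one transient infinite subtree at $v$ and is therefore transient. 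Consequently $S$, which contains $T_v$ as a subnetwork, is also transient. To exhibit the recurrent branch, I would then remove the vertex $u = e^+$ from $S$: this disconnects $e$ and splits $S$ into $T_v$ together with the components of $R \setminus u$. Since $R$ is an infinite tree, $R \setminus u$ has at least one infinite component $R^*$, and $R^*$ is recurrent by Rayleigh's monotonicity principle (as a subnetwork of the recurrent tree $R$). Thus $R^*$ is an infinite recurrent branch of the transient tree $S$ at $u$, and Corollary \ref{updatetol} then transfers the positive probability of this configuration back to $\F$.

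The main obstacle is the second step: establishing with positive probability the existence of a single edge that joins a core vertex of a transient component directly to a recurrent component. The hypothesis only guarantees coexistence of such $R$ and $T$, but they need not be adjacent in $G$ precisely at a core vertex of $T$, so one must argue carefully, combining connectedness of $G$ with the flexibility afforded by unimodularity (or perhaps a preliminary application of update-tolerance to manufacture the required adjacency). The non-empty core hypothesis is also essential in the update analysis, where it ensures that $T_v$ remains transient regardless of which edge the update happens to remove at $v$. Finally, the argument is purely combinatorial and therefore applies identically whether the trees are given the conductances inherited from $G$ or unit conductances.
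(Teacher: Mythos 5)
You correctly identify the central gap in your own proposal: the hypothesis only guarantees the \emph{coexistence} of a recurrent component and a transient component with non-empty core, not that they are adjacent, and certainly not that a core vertex of the transient tree is adjacent to the recurrent one. Your suggestion to fill this gap using unimodularity is a dead end here — note that \cref{coexistimpliesbranch} is stated for an arbitrary infinite network $G$, with no unimodularity hypothesis whatsoever (unimodularity only enters in \cref{nobranches}, via the mass-transport principle). Your alternative suggestion, ``a preliminary application of update-tolerance to manufacture the required adjacency,'' is the right idea, but it is not carried out, and it is precisely where the real work of the lemma lies.

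The paper's fix is to perform a \emph{sequence} of updates rather than a single one. By connectedness of $G$ one finds a finite simple path $\gamma_0,\ldots,\gamma_n$ with $\gamma_0$ in a recurrent component, $\gamma_n$ the unique core vertex of a transient component on the path, and no other $\gamma_i$ in the recurrent component; by countability some such non-random path has positive probability. One then updates successively at the oriented edges $e_i=(\gamma_i,\gamma_{i-1})$, $i=1,\ldots,n$, obtaining forests $\F_0=\F$, $\F_i=U(\F_{i-1},e_i)$. Each update removes an edge adjacent to $\gamma_i$, so the recurrent tree $T_{\F_0}(\gamma_0)$ is never touched, and after the last step $T_{\F_n}(\gamma_n)$ contains both that recurrent tree and a branch of the original transient tree at $\gamma_n$ (transient under the contradiction assumption you also invoke); removing $\gamma_1$ from $T_{\F_n}(\gamma_n)$ exhibits $T_{\F_0}(\gamma_0)$ as a recurrent branch of a transient tree. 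Iterated update-tolerance (a telescoping product of the ratios $c(e_i)/c(e_i^-)$) then transfers positive probability back to $\F_0=\F$. Your single-update analysis is essentially correct and would function as the final step of this chain of updates, but without the preceding chain, the statement you need in step two is simply not available, and the proof does not close.
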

\begin{proof}
We consider the case that the edges of the trees are given the conductances inherited from $G$, the other case is similar.
If two such components exist, then one can find a finite path starting at \ca{a} vertex of a recurrent component $T$ and ending in a vertex of the core of \ca{a} transient component $T'$. Moreover, \ca{by taking the shortest such path,} the starting vertex is the only vertex in $T$ and the end vertex is the only vertex in $\core(T')$.


Thus, there exists a non-random \ca{finite} simple path  $\gamma=\langle\gamma_i\rangle_{i=0}^n$ in $G$ such that the following event, denoted $\B(\gamma)$, holds with positive probability:
\begin{itemize}\itemsep0em
\item $T_\F(\gamma_0)$ is recurrent,
\item $T_\F(\gamma_i) \neq T_\F(\gamma_0)$ for $0<i\leq n$,
\item $T_\F(\gamma_n)$ is transient and
\item $\gamma_n \in \core(T_\F(\gamma_n))$ and it is the only such vertex in $\gamma$.
\end{itemize}
For each $1\leq i \leq n$, let $e_i$ be an oriented edge of $G$ with $e_i^-=\gamma_i$ and $e_i^+=\gamma_{i-1}$. Define the forests $\langle\F_i\rangle_{i=0}^n$ by setting $\F_0 = \F$ and recursively,
$$ \F_i = U(\F_{i-1}, e_i) \, , \qquad i=1,\ldots, n \, .$$
We claim that on the event $\B(\gamma)$, at least one of the two forests $\F_0$ or $\F_n$ contains a transient tree with a recurrent branch. If $\F_0$ contains such a tree we are done, so suppose not. We claim that in this case $T_{\F_n}(\gamma_n)$ is a transient tree with a recurrent branch.
Indeed, at each step of the process we are add the edge $e_i$ and remove some other edge adjacent to $\gamma_i$ in $T_{\F_{i-1}}(\gamma_i)$, so that $T_{\F_n}(\gamma_n)$ contains the tree $T_{\F_0}(\gamma_0)$ (since $\gamma_i \not \in T_{\F_0}(\gamma_0)$ for $i\geq1$) and the path $e_1, \ldots, e_n$. Moreover, since $\gamma_i \not \in \core(T_{\F_0}(\gamma_n))$
 for all $0 \leq i < n$, the tree $T_{\F_n}(\gamma_n)$ contains a branch of $T_{\F_0}(\gamma_n)$ and is therefore transient by our assumption. Thus, removing $\gamma_1$ from the transient tree $T_{\F_n}(\gamma_n)$ yields the recurrent branch $T_{\F_0}(\gamma_0)$ as required.

Denote by $\sE$ the set of subgraphs of $G$ that are transient trees with a recurrent branch. We have shown that $\P(\F_0 \in \sE) + \P(\F_n \in \sE) >0$, while by update-tolerance (\cref{updatetol})
$$\P( \F_0 \in \sE) \geq \Big ( \prod_{i=1}^n {c(e_i) \over c(e_i^-)} \Big ) \P(\F_n \in \sE) \, $$
so that  $\P(\F_0\in \sE) >0$ as claimed.
\end{proof}
 %
 \begin{figure}
 \centering
 \includegraphics[width=0.8\textwidth]{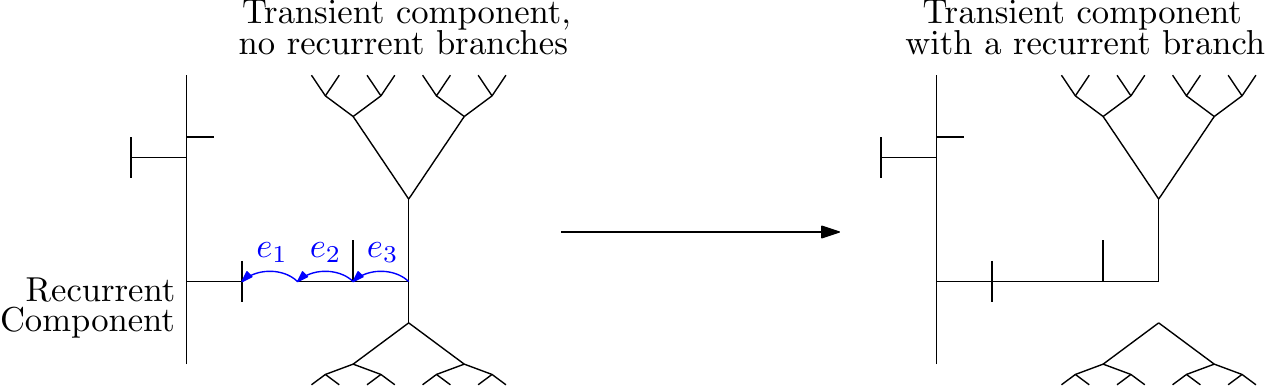}\hspace{2em}
 \caption{When recurrent components and transient components with non-empty cores and no recurrent branches coexist, a finite sequence of updates can create a transient component with a recurrent branch.}
\end{figure}

%
%
%

\begin{proof}[Proof of \cref{T:everyFUSFtreeistransient}.]
We may assume that $(G,\rho)$ is ergodic, \ca{see \cref{S:ergodicity}}.
We apply \cite[Proposition 4.9 and Theorem 6.2]{AL07} to deduce that whenever $(T,\rho)$ is an infinite unimodular random rooted (unweighed) tree with $\E[c(\rho)]<\infty$, the event that $T$ is infinitely-ended and the event that $T$ is transient coincide up to a null set and, moreover, $T$ has positive probability to be transient and infinitely-ended if and only if $\E[\deg_T(\rho)]>2$.
The expected degree of the WUSF is 2 in any unimodular random rooted network, and since the $\FUSF_G$ stochastically dominates $\WUSF_G$, the  assumption that $\FUSF_G \neq \WUSF_G$ implies that $\E [ \deg_\F (\rho) ] > 2$. Let $M>0$ and let $\F'$ be the forest obtained by deleting from $\F$ every edge $e$ such that $\max(\deg_\F(e^-),\deg_\F(e^+))\geq M$. If $M$ is sufficiently large then $\E[\deg_{\F'}(\rho)]>2$ by the monotone convergence theorem. It follows by the above that $T_{\F'}(\rho)$ is infinitely-ended and and transient (when given unit conductances) with positive probability, and consequently that the same holds for $T_\F(\rho)$ by Rayleigh monotonicity. Ergodicity of $(G,\rho,\F)$ then implies that the forest $\F$ contains a component that is infinitely-ended and transient (when given unit conductances) a.s.

Assume for contradiction that with positive probability $\F$ has a component that is finitely-ended, or equivalently a component that is recurrent when given unit conductances.  \cref{coexistimpliesbranch} then implies that with positive probability $\F$ has a transient component with a recurrent branch (when all components are given unit conductances), contradicting \cref{nobranches}.

Thus, we have that all components of $\F$ are a.s.~infinitely-ended and are transient when given unit conductances. It follows from \cite[Proposition 4.10]{AL07} that every component is also a.s.~transient when given the conductances inherited from~$G$. \qedhere
\end{proof}


\subsection{Pivotal edges for the FUSF}
%
%
%

Let $G$ be a network, let $\F$ be a sample of $\FUSF_G$ and let $\sA$ be a component property.
We say that an oriented edge $e$ of $G$ is a \textbf{$\delta$-additive pivotal} for {a vertex $v$} if
\begin{enumerate}
\item $e^+ \in T_{\F}({v})$ and $e^{-} \not \in T_\F({v})$ and,
\item given $\F$, the components  $T_{U(\F,e)}({v})$ and $T_{\F}({v})$ have different types with probability at least $\delta$.
\end{enumerate}
We say that an oriented edge $e$ is a \textbf{$\delta$-subtractive pivotal} for {$v$} if
\begin{enumerate}
\item $e^- \in T_{\F}({v})$ and $e^{+} \not \in T_\F({v})$ and,
\item given $\F$, the components $T_{U(\F,e)}({v})$ and $T_{\F}({v})$ have different types with probability at least $\delta$.
\end{enumerate}

\noindent \ct{We emphasize that when we say ``with probability at least $\delta$'' above, this is over \ct{the} randomness of $U(\F,e)$, rather than \ct{of $\F$}.}


%

\begin{lemma}\label{pivotaltype} Let $G$ be a network and let $\F$ be a sample of $\FUSF_G$. Assume that a.s.~all the components of $\F$ are transient trees with non-empty cores and that with positive probability $\F$ has components of both types $\A$ and $\neg \A$. Then for some small $\delta>0$, with positive probability there exists a vertex $v$ and an edge $e$ such that $v\in \core(F)$ and $e$ is a $\delta$-pivotal for $v$.
\end{lemma}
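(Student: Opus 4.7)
I plan to mimic the pivotal-edge argument of Lyons and Schramm \cite{LS99}, modified to account for the fact that in the update-tolerance setting inserting an edge forces the deletion of another. Using the hypotheses that components of both types coexist with positive probability and that every component has a non-empty core, a mass-transport-plus-countability argument should produce a non-random finite simple path $\gamma=(\gamma_0,\ldots,\gamma_n)$ in $G$ such that the event $\B(\gamma)$ has positive probability, where $\B(\gamma)$ specifies that $T_\F(\gamma_0)\in\A$ with $\gamma_0\in\core(T_\F(\gamma_0))$, that $T_\F(\gamma_n)\notin\A$ with $\gamma_n\in\core(T_\F(\gamma_n))$, and that the components $T_\F(\gamma_0),T_\F(\gamma_1),\ldots,T_\F(\gamma_n)$ are pairwise distinct.

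On $\B(\gamma)$ I will consider the sequence of updates $\F_0=\F$, $\F_i=U(\F_{i-1},e_i)$ with $e_i$ oriented from $\gamma_i$ to $\gamma_{i-1}$, exactly as in \cref{coexistimpliesbranch}. The pairwise distinctness ensures two things: first, every deleted edge $D(e_i)$ is adjacent to a vertex $\gamma_i$ lying outside both $T_\F(\gamma_0)$ and $T_\F(\gamma_n)$, so these two trees survive as subgraphs of every $\F_i$, which keeps $\gamma_0\in\core(T_{\F_i}(\gamma_0))$ and $\gamma_n\in\core(T_{\F_i}(\gamma_n))$ throughout the process; second, at each step $k$, the edge $e_k$ is a legal additive-pivotal candidate for $\gamma_0$ in $\F_{k-1}$, since by induction $\gamma_{k-1}\in T_{\F_{k-1}}(\gamma_0)$ while $\gamma_k$ still sits in its original untouched component. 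Simplicity of $\gamma$ implies that each $e_i$ survives all later updates, so $\gamma_0$ and $\gamma_n$ end up in a common component of $\F_n$. If the type of this merged component is $\neg\A$ then $\gamma_0$'s component type has flipped at some step $k\in\{1,\ldots,n\}$, whereas if the merged type is $\A$ then $\gamma_n$'s component type has flipped at step $n$ (the only step which touches $\gamma_n$'s component, by pairwise distinctness).

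By pigeonhole over the finite set of possible (step, vertex) pairs, there is a specific choice $(k^\star,v^\star)$ for which the unconditional probability of such a flip is positive. Writing $p(F)$ for the conditional probability, given $\F_{k^\star-1}=F$, that the update at $e_{k^\star}$ flips the type of $v^\star$'s component, Markov's inequality produces some $\delta>0$ for which the Borel event ``$v^\star\in\core(T_F(v^\star))$, $e_{k^\star}$ occupies the correct topological position relative to $v^\star$'s component in $F$, and $p(F)\geq\delta$'' holds with positive probability under the law of $\F_{k^\star-1}$; on this event $e_{k^\star}$ is a $\delta$-pivotal (additive or subtractive, depending on the case) for $v^\star$ in $\F_{k^\star-1}$.

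To transfer back to $\F_0\sim\FUSF_G$ I iterate \cref{updatetol}: for any non-negative measurable function $g$ of the forest, a direct induction using the bound $\P(U(\cdot,e_i)\in\cdot)\leq (c(e_i^-)/c(e_i))\,\P(\cdot\in\cdot)$ shows that the law of $\F_{k^\star-1}$ is absolutely continuous with respect to $\FUSF_G$ with Radon--Nikodym derivative bounded by $\prod_{i=1}^{k^\star-1}c(e_i^-)/c(e_i)<\infty$, so the positive-probability Borel event above also has positive $\FUSF_G$-probability. This yields the desired $\delta$-pivotal edge $e_{k^\star}$ at the core vertex $v^\star$ in the original sample $\F$. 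The main obstacle of the plan is the simultaneous bookkeeping: I need $v^\star$ to stay in the core, $e_{k^\star}$ to remain in the correct topological relation to $v^\star$'s component, and absolute continuity to be preserved all the way back to $\F_0$; all three requirements rest on the careful choice of $\gamma$, which is in turn made possible by the non-empty-core assumption.
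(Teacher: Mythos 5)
Your overall strategy (build a path, chain updates along it, argue a type-flip occurs somewhere, transfer back by iterating update-tolerance) is the same as the paper's, and the absolute-continuity step at the end is correct. However, the specific path you construct is over-constrained in a way that makes the construction fail, and the constraints you impose actually lean on each other in an incompatible way.

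You require the components $T_\F(\gamma_0),T_\F(\gamma_1),\ldots,T_\F(\gamma_n)$ to be \emph{pairwise distinct} while also requiring $\gamma_n\in\core(T_\F(\gamma_n))$. Pairwise distinctness forces $\gamma_n$ to be the unique vertex of the path in $T_\F(\gamma_n)$, so the path must jump into $T_\F(\gamma_n)$ exactly at $\gamma_n$; this forces $\gamma_n$ to have a $G$-neighbour outside $T_\F(\gamma_n)$. There is no reason a core vertex of $T_\F(\gamma_n)$ should have such a neighbour, and the same objection applies to $\gamma_0$. More fundamentally, even without the core constraints, one cannot in general route a path between two components of opposite types while touching each intervening component exactly once; the natural shortest path re-enters components. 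The paper sidesteps all of this by first observing that on any $G$-path from an $\A$-component to a $\neg\A$-component there must be two \emph{adjacent} components of opposite types $T$ and $T'$, taking $\gamma_0\in T$ arbitrary (not necessarily a core vertex) and $\gamma_1,\ldots,\gamma_n$ all inside $T'$, with $\gamma_n$ the unique core vertex of $T'$ on the path. With this weaker requirement your claim ``the deleted edge at step $i$ misses $T_\F(\gamma_n)$'' no longer holds for $i\geq 1$, so your first bookkeeping argument (``$T_\F(\gamma_n)$ untouched until step $n$'') must be replaced: the paper instead uses that the cluster of $\gamma_0$ can only grow (cycle-popping leaves the vertex set unchanged; a genuine merge enlarges it), so if the type of $T_{\F_i}(\gamma_0)$ ever flips, the first such $e_i$ is an additive pivotal, while if it never flips, $T_{\F_n}(\gamma_0)=T_{\F_n}(\gamma_n)$ has type $\A$ forcing a subtractive pivotal at step $n$ for $\gamma_n$.

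Finally, you need not insist $\gamma_0\in\core(T_\F(\gamma_0))$: if the additive case occurs, the pivotal edge $e_i$ is additive pivotal for \emph{every} vertex of $T_{\F_{i-1}}(\gamma_0)$, and since $T_\F(\gamma_0)\subseteq T_{\F_{i-1}}(\gamma_0)$ and the hypothesis guarantees $\core(T_\F(\gamma_0))\neq\emptyset$, some core vertex inherits the pivotal edge. This removes the second unnecessary constraint on the path. Once these two adjustments are made, your update-chain and transfer-back argument go through essentially as in the paper.
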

\begin{proof}
We argue similarly to \cref{coexistimpliesbranch}. Due to the assumptions of this lemma, there must be a component $T$ of type $\A$ and a component $T'$ of type $\neg \A$ and an edge $e \not \in \F$ connecting them. So we may form a path starting with $e$ that ends in a core vertex of $T'$ such that all edges of the path except for $e$ are in $T'$, and the last vertex of the path is the only vertex in $\core(T')$.


Hence, there exists a non-random simple path  $\gamma=\langle\gamma_i\rangle_{i=0}^n$ in $G$ such that the following event, denoted $\B(\gamma)$, holds with positive probability:
\begin{itemize}\itemsep0em
\item $T_\F(\gamma_0)$ has type $\A$,
\item $T_\F(\gamma_i) = T_\F(\gamma_j) \neq T_\F(\gamma_0)$ for all $0<i\leq j\leq n$,
\item $T_\F(\gamma_n)$ has type $\neg \A$ and,
\item $\gamma_n \in \core(T_\F(\gamma_n))$ and it is the only such vertex in $\gamma$.
\end{itemize}
 %
%
%
%

\noindent For each $1\leq i \leq n$, let $e_i$ be an oriented edge of $G$ with $e_i^-=\gamma_i$ and $e_i^+=\gamma_{i-1}$. Define the forests $\langle\F_i\rangle_{i=0}^n$ by setting $\F_0 = \F$ and, recursively,
$$ \F_i = U(\F_{i-1}, e_i) \, , \qquad i=1,\ldots, n .$$


We claim that given $\sB(\gamma)$ there exists some small $\delta>0$ such that either one of the edges $e_i$ is a $\delta$-additive pivotal for $\gamma_0$ in the forest $\F_{i-1}$, or one of the edges $e_i$ is a $\delta$-subtractive pivotal for $\gamma_n$ in the forest $\F_{i-1}$. Indeed, if there is $1 \leq i \leq n$ such that
\[ \P \big ( T_{\F_i}(\gamma_0) \in \neg \A \mid \F_{i-1} \big ) > 0 \, ,\]
(i.e., the component of $\gamma_0$ changes type with positive probability in the transition from $\F_{i-1}$ to $\F_i$), then for the first such $i$, the edge $e_i$ is a $\delta$-additive pivotal for $\gamma_0$ in $\F_{i-1}$ (since the cluster of $\gamma_0$ only grows) where $\delta>0$ is the conditional probability above.

If this does not occur, then a.s.~$T_{\F_n}(\gamma_0)$ is of type $\A$. \ca{However,} $T_{\F_n}(\gamma_0)=T_{\F_n}(\gamma_n)$ \ca{and $T_{\F_0}(\gamma_n)$ is of type $\neg \ct{\A}$}, so there is the first $1 \leq i \leq n$ in which $T_{\F_i}(\gamma_n) \in \A$ (i.e., the component of $\gamma_n$ changes type \ca{in the transition from $\F_{i-1}$ to $\F_i$}). For this $i$ we have $e_i$ is a $\delta'$-subtractive pivotal for $\gamma_n$ in $\F_{i-1}$ with
\[ \delta' = \P \big ( T_{\F_i}(\gamma_n) \in \A \mid \F_{i-1} \big ) > 0 \, .\]


Let $\sE_\delta$ be the event that there exists a vertex $v$ such that $v \in \core(\F)$ and there exists an edge $e$ that is \ca{a $\delta$-additive pivotal or $\delta$-subtractive pivotal} for $v$ in $\F$. We proved that for some small $\delta>0$ we get $\sum_{i=0}^n \P(\F_i \in \sE_\delta) > 0$. However, by update tolerance (\cref{updatetol}) it follows that
$$\P( \F_0 \in \sE) \geq \Big ( \prod_{j=1}^i {c(e_j) \over c(e_j^-)} \Big ) \P(\F_i \in \sE) \, .$$
Hence $\P(\F_0\in \sE_\delta ) >0$ for some small $\delta>0$ as claimed.
\end{proof}
%
%
%

\subsection{Proof of Theorem \ref{mainthm} for the FUSF}
%
%
%
%
%
%

\ca{Our goal in this section is to prove the following theorem.

\begin{thm} \label{mainthmfusf} Let $(G,\rho)$ be a unimodular random network with $\E[c(\rho)]<\infty$, and let $\F$ be a sample of $\FUSF_G$. On the event that $\FUSF_G \neq \WUSF_G$\ct{,} we have that for every component property $\A$, either every connected component of $\F$ has property $\A$ or none of the connected components of $\F$ have property $\sA$ almost surely.
\end{thm}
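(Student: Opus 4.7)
The plan is to run an update-tolerance analogue of the Lyons--Schramm indistinguishability argument from \cite{LS99}, using all the ingredients developed in \cref{S:cyclebreakingintheFUSF}--\cref{pivotaltype} above.

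\textbf{Reduction.} By passing to the ergodic decomposition (see \cref{S:ergodicity}), I may assume that $(G,\rho,\F)$ is ergodic. I then assume for contradiction that with positive probability $\F$ contains components of both types $\sA$ and $\neg\sA$. Since the event ``components of both types coexist'' is invariant under rerooting and thus has probability $0$ or $1$, this coexistence in fact holds almost surely.

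\textbf{Producing pivotals.} Because $\FUSF_G\neq\WUSF_G$, \cref{T:everyFUSFtreeistransient} guarantees that every component of $\F$ is a transient infinitely-ended tree almost surely, and hence has nonempty (in fact infinite) core. This places us in the hypothesis of \cref{pivotaltype}, which yields some $\delta>0$ and an event of positive probability on which there exists a core vertex $v$ and a $\delta$-pivotal edge $e$ for $v$. A key observation is that being a $\delta$-pivotal depends only on the component, not on the chosen representative vertex: if $u\in T_\F(v)$ then an edge is $\delta$-pivotal for $v$ if and only if it is $\delta$-pivotal for $u$.

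\textbf{Pivotals at arbitrary scales.} I would upgrade the above to the statement that with positive probability, for every $r\geq 0$ the component $T_\F(\rho)$ admits a $\delta$-pivotal edge $e$ with $d_G(\rho,e^-)>r$. First, a mass-transport argument---sending unit mass from each core vertex of a component possessing a $\delta$-pivotal to a distinguished such vertex in its component, and then from that vertex to $e^\pm$---combined with the fact that some component of $\F$ carries a pivotal almost surely, shows that with positive probability $T_\F(\rho)$ itself has a $\delta$-pivotal. Next, after biasing the law of $(G,\rho,\F)$ by $c_\F(\rho)$ to obtain the reversible sequence in \eqref{Eq:reversibleforest0}, ergodicity of this stationary sequence (\cref{c:ergodicity} is not yet established, so one uses directly that tail triviality of the USF passes ergodicity of $(G,\rho)$ to $(G,\rho,\F)$, cf.\ \cref{S:ergodicity}) implies that the simple random walk $\langle X_n\rangle$ on $\F$ visits vertices with a $\delta$-pivotal with positive density. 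In particular, with positive probability there are $\delta$-pivotals for $\rho$ whose tails lie arbitrarily far from $\rho$ in~$G$.

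\textbf{Contradiction via Borel measurability.} Let $p=\P(T_\F(\rho)\in\sA)\in(0,1)$. For any $\eps>0$, by Borel measurability of $\sA$ and the definition of the local topology on $\cG_\bullet^{\{0,1\}}$ I can choose $r$ and an event $\sA_r$ measurable with respect to the restriction of $(G,\F)$ to $B_G(\rho,r)$ with $\P\bigl(\{T_\F(\rho)\in\sA\}\symdif \sA_r\bigr)<\eps$. The crucial structural observation is that updating at an edge $e$ whose tail satisfies $d_G(\rho,e^-)>r+1$ leaves $B_G(\rho,r)$ pointwise unchanged, because the removed edge $D(\F,e)$ is adjacent to $e^-$. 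By update-tolerance (\cref{updatetol}), the law of $U(\F,e)$ on the positive-probability event that such a pivotal $e$ exists is absolutely continuous with respect to $\FUSF_G$ with an explicit density bound, so the approximation transfers: $\P\bigl(\{T_{U(\F,e)}(\rho)\in\sA\}\symdif \sA_r\bigr) = O(\eps)$. On the other hand, the pivotality of $e$ means that, conditional on $\F$, the indicator $\mathbbm 1[T_\F(\rho)\in\sA]$ flips under the update with probability at least $\delta$, while $\sA_r$ does not change at all. Choosing $\eps\ll\delta$ yields the desired contradiction.

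\textbf{Main obstacle.} The principal technical difficulty is the third step: converting the raw positive-probability existence of a pivotal edge somewhere in $(G,\F)$, as supplied by \cref{pivotaltype}, into pivotals for $\rho$ placed at arbitrarily large $G$-distances. This requires combining unimodularity (to relocate the pivotal into the component of the root) with the ergodicity of the random-walk-shift on the forest (to propagate it out along the component), and is the step at which the update-tolerance framework departs most substantively from the insertion-tolerance argument of \cite{LS99}, where the inserted edge can be placed anywhere without compensating modifications.
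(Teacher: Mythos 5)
There are two genuine gaps, both concentrated in the steps that you yourself flag as the ``main obstacle.''

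First, in ``Pivotals at arbitrary scales'' you invoke ergodicity of the random-walk-shift $\langle(G,\langle X_{n+k}\rangle_{k\in\Z},\F)\rangle_{n\in\Z}$, where $X$ walks on $\F$. But ergodicity of that shift is essentially equivalent to indistinguishability of the components of $\F$ --- it is exactly the content of \cref{c:ergodicity}, which the paper \emph{derives from} \cref{mainthm}. Tail triviality gives ergodicity of $(G,\rho,\F)$ as a unimodular marked network, but that is strictly weaker than ergodicity of the forest-walk shift, whose invariant $\sigma$-algebra contains rerooting-invariant properties of $T_\F(\rho)$. The paper sidesteps this by using only \emph{stationarity} of the forest-walk shift \eqref{Eq:reversibleforest1}, which follows from reversibility alone, to conclude that $\widehat\P(\sE^n\cap\{(G,\rho,\F)\in\sA\}\cap\sP_n)$ is independent of $n$.

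Second, and more seriously, the contradiction step applies update-tolerance to an edge $e$ that is a measurable function of $\F$ (a far-away $\delta$-pivotal for $\rho$, which can only be located after observing $\F$). \cref{updatetol} controls the Radon--Nikodym derivative of the law of $U(\F,e)$ against $\FUSF_G$ only for a \emph{fixed} edge $e$; when $e=e(\F)$ depends on $\F$, the pushforward of $\FUSF_G$ under $\F\mapsto U(\F,e(\F))$ need not be absolutely continuous with any controlled density, so the asserted transfer $\P\bigl(\{T_{U(\F,e)}(\rho)\in\sA\}\symdif\sA_r\bigr)=O(\eps)$ is unjustified. The paper's proof gets around this by sampling the candidate edges $e_n$ uniformly from the $r$-ball around $X_n$ \emph{independently of $\F$} given the walk, and by restricting to the disjoint events $\sE^n_e$ on which the bi-infinite forest-walk stays clear of the components of $\F\setminus X_n$ containing $e^\pm$. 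On $\sE^n_e$ one has the crucial symmetry $\widehat\P_{(G,\rho)}(\sE^n_e\mid\F=f)=\widehat\P_{(G,\rho)}(\sE^n_e\mid U(\F,e)=f)$, because the walk never visits the region the update alters; combined with update-tolerance for each fixed $e$ and summation over the disjoint $\sE^n_e$, this is what makes the cylinder approximation transfer from $\F$ to $U(\F,e_n)$. Some mechanism of this kind is indispensable; without it, your ``the approximation transfers'' step has no justification.
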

}

We follow the strategy of Lyons and Schramm \cite{LS99} while making the  changes necessary to use update-tolerance. 



\begin{proof}[\ca{Proof of \cref{mainthmfusf}}]
 Let $(G,\rho)$ be a unimodular random rooted network with $\E[c(\rho)]<\infty$, let $\F$ be a sample of $\FUSF_G$ and
let $\A$ be a component property.
\ca{Let $\langle X_n \rangle_{ n \in \Z}$ be a bi-infinite random walk on $\F$ started at $\rho$ (that is, the concatenation of two independent random walks starting at $\rho$, as in \cref{eq:reversibleunimodularcorrespondance}).}
Conditioned on the random walk $\langle X_n \rangle_{ n \in \Z}$, let $e_n$ be oriented edges chosen uniformly and independently from the set of edges at distance at most $r$ from $X_n$ in $G$. Finally, let $\{U(\F,e) : e \in E\}$ be updates of $\F$ at each edge $e$ of $G$, sampled independently of each other and of $\langle X_n \rangle_{n \in \Z}$ and $\langle e_n \rangle_{n \in \Z}$ conditional on $(G,\rho,\F)$.
We bias by $c_\F(\rho)$ so that, by \cite[Theorem 4.1]{AL07},
\begin{equation}\label{Eq:reversibleforest1}(G,\langle X_{n} \rangle_{n\in \Z},\F) \eqd (G,\langle X_{n+k} \rangle_{n\in \Z},\F) \quad \forall\, k \in \Z. \end{equation}
Let ${\widehat\P}$ denote joint distribution of the random variables $(G,\rho)$, $\F$, $\langle X_n\rangle_{n \in \Z}$,  $\langle e_n\rangle_{n \in \Z}$, and $\{U(\F,e) :e \in E\}$ under this biasing, and let $\widehat{\P}_{(G,\rho)}$ denote the conditional distribution given $(G,\rho)$ of $\F$, $\langle X_n\rangle_{n \in \Z}$, $\langle e_n\rangle_{n \in \Z}$ and $\{U(\F,e) :e \in E\}$ under the same biasing.

 By \cref{T:everyFUSFtreeistransient}, every component of $\F$ is a.s.~transient and infinitely-ended.
By \cref{pivotaltype}, there exists $\delta>0$ such that with positive probability $\rho \in \core(\F)$ and there exists a \ca{either a $\delta$-additive or $\delta$-subtractive} pivotal edge $e$ for $\rho$ in $\F$. By decreasing $\delta$ if necessary, it follows that there exists an integer $r$ such that with positive probability $\rho \in \core(\F)$ and there exists a $\delta$-pivotal edge for $\rho$ in $\F$ such that $\rho$ and $e$ are at graph distance at most $r$ in $G$ and  $c(e)/c(e-) \geq \delta$.


Conditional on $(G,\rho)$, for each edge $e$ in $G$ and $n \in \Z$, denote by $\sE^n_e$ the event that $e_n = e$ and that the trace $\{X_{n+k}\}_{k\in \Z}$ is disjoint from the components of $\F \setminus X_n$ containing $e^-$ and $e^+$.
%
For every essential spanning forest $f$ of $G$ and $n\in \Z$, we have
$$ {\widehat\P_{(G,\rho)}} ( \sE^n_e \mid \, \F = f) = {\widehat\P_{(G,\rho)}} ( \sE^n_e \mid U(\F,e) = f) \, .$$
Thus, for every event $\B \subseteq \{0,1\}^{E(G)}$ such that $\widehat\P_{(G,\rho)}(\F\in\B) > 0$, we have that
\begin{align*} {\widehat\P_{(G,\rho)}}(\sE^n_e \cap \{\F \in \B\} )  &= {\widehat\P_{(G,\rho)}} (\sE^n_e \mid \F \in \B)  \widehat\P_{(G,\rho)}(\F \in \B) \nonumber \\
&= {\widehat\P_{(G,\rho)}} (\sE^n_e \mid \{U(\F,e) \in \B\}) \widehat\P_{(G,\rho)}(\F \in \B) \nonumber\\
&= \frac{\widehat\P_{(G,\rho)}(\F \in \B)}{\widehat\P_{(G,\rho)}(U(\F,e) \in \B)} {\widehat\P_{(G,\rho)}}(\sE^n_e \cap \{U(\F,e) \in \B\}) \nonumber\\
&= \frac{\FUSF_G[c_\F(\rho)\mathbbm{1}(\F \in \B)]}{\FUSF_G^e[c_\F(\rho)\mathbbm{1}(U(\F,e) \in \B)]} {\widehat\P_{(G,\rho)}}(\sE^n_e \cap \{U(\F,e) \in \B\}). \nonumber
\end{align*}
Observe that if $e$ does not have $\rho$ as an endpoint then, by \cref{Lem:FUSfupdatestationarity},
\begin{align*} \FUSF_G[c_\F(\rho)\mathbbm{1}(\F\in\sB)] &= \frac{1}{c(e^-)}\sum_{e'^-=e^-} c(e')\FUSF_G^{e'}[c_{U(\F,e')}(\rho)\mathbbm{1}(U(\F,e')\in\sB)]\\
&\geq \frac{c(e)}{c(e^-)} \FUSF^e_G[c_{U(\F,e)}(\rho)\mathbbm{1}(U(\F,e)\in\sB)] \\
&= \frac{c(e)}{c(e^-)} \FUSF^e_G[c_{\F}(\rho)\mathbbm{1}(U(\F,e)\in\sB)],\end{align*}
and so, for every edge $e$ of $G$ not having $\rho$ as an endpoint,
\begin{align}
{\widehat\P_{(G,\rho)}}(\sE^n_e \cap \{\F \in \B\} )\geq {c(e) \over c(e^-)} {\widehat\P_{(G,\rho)}}(\sE^n_e \cap \{U(\F,e) \in \B\}) \,\label{eq:FUSFmainproof0} .
\end{align}
Update-tolerance also implies that \eqref{eq:FUSFmainproof0} holds trivially when $\widehat\P_{(G,\rho)}(\F\in\sB)=0$.

Fix $\eps>0$, and let $R$ be sufficiently large such that there exists an event $\A'$ that is measurable with respect to $(G,\rho)$ and $\F \cap B_G(\rho,R)$ and satisfies $\widehat\P(\A \symdif \A') \leq \eps$.
Such an $\A'$ exists by the assumption that $\A$ is measurable. Define the disjoint unions
\begin{align*}
\sE ^n := \bigcup_{c(e)/(e^-) \geq \delta} \sE^n_e \, \quad \text{ and } \quad
\sE^n_R := \bigcup_{e^- \notin B_G(\rho,R) \,  , c(e)/c(e^-) \geq \delta} \sE^n_e .
\end{align*}
 Condition on $(G,\rho)$, and let
\[\B =\{ \omega \in \{0,1\}^E : (G,\rho,\omega)\in \A' \setminus \A\}.\]
Summing over \eqref{eq:FUSFmainproof0} with this $\sB$ yields that, for every $R\geq 1$,
\begin{align*} \widehat\P_{(G,\rho)}( \F \in \sB) &\geq {\widehat\P_{(G,\rho)}}( \sE^n_R \cap \{\F \in \sB\}) \\ &\geq \delta {\widehat\P_{(G,\rho)}}(\sE^n_R \cap \{U(\F,e_n) \in \sB \} ) \,
\end{align*}
and hence, taking expectations,
\[ \widehat\P((G,\rho,\F)\in\sA{'}\setminus\sA)\geq \delta {\widehat\P}\big(\sE^n_R \cap \{(G,\rho,\F)\in\sA{'}\setminus\sA)\}\big).\]
By the definition of $\A'$ we have that  \[\sE^n_R\cap\{(G,\rho,U(\F,e_n)) \in \A'\} = \sE^n_R\cap\{(G,\rho,\F) \in \A'\},\] and so
\[\widehat\P((G,\rho,\F)\in\A' \setminus \A)  \geq \delta {\widehat\P}\Big(\sE^n_R \cap \big\{(G,\rho,\F) \in \A'\big\} \cap \left\{(G,\rho,U(\F,e_n)) \in \neg \A\right\} \Big).\]

Let $\sP_n$ denote the event that
$e_n$ is either a $\delta$-additive or $\delta$-subtractive pivotal edge for $X_n$.   
On the event $\sE_R^n$, the vertices $\rho$ and $X_n$ are in the same component of $U(\F,e_n)$, so that, on the event $\sP_n \cap \sE_R^n \cap \{(G,\rho,\F)\in\sA\}$,
 we have that $(G,\rho,U(\F, e_n)) \in \neg \A$ with probability at least $\delta$. Thus,
\begin{align*} \widehat\P( (G,\rho,\F)\in\A' \setminus \A)  &\geq \delta^2 {\widehat\P}(\sE^n_R \cap \{(G,\rho,\F)\in \A'\} \cap \sP_{n} ) \\ &\geq \delta^2 {\widehat\P}(\sE^n_R \cap \{(G,\rho,\F)\in \A\}\cap \sP_{n}) - \delta^2 \eps \, ,
\end{align*}
by definition of $\A'$. Since $\langle X_n\rangle_{n\geq0}$ is transient, we can take $n$ to be sufficiently large that ${\widehat\P} ( \{(G,\rho,\F)\in\A\} \cap  \sE^n \setminus \sE^n_R ) \leq \eps$. Thus, for such $n$,
\begin{align*} \widehat\P( (G,\rho,\F) \in \A' \setminus \A) &\geq \delta^2 {\widehat\P}(\sE^n \cap \{(G,\rho,\F) \in \A\} \cap \ \sP_{n}  ) - 2 \delta^2 \eps
\end{align*}
since $\A$ is a component property. Stationarity of $\langle(G,\F,\langle X_{n+k}\rangle_{k\in\Z})\rangle_{n\in\Z}$ implies that ${\widehat\P}(\sE^n \cap \{(G,\rho,\F) \in \A\}\cap \sP_n )$ does not depend on $n$, so that it suffices to show it is positive to obtain a contradiction by choosing $\eps>0$ sufficiently small.

As mentioned earlier, with positive probability $\rho \in \core(\F)$ and there exists a $\delta$-pivotal edge $e$ in $\F$ at distance at most $r$ from $\rho$ such that $c(e)/(e^-) \geq \delta$. Hence, either \[{\widehat\P}(\{\rho \in \core(\F)\}\cap\{(G,\rho,\F)\in \A\}\cap \sP_0) >0\] or \[{\widehat\P}(\{\rho \in \core(\F)\}\cap \{(G,\rho,\F) \in \neg\A\} \cap \sP_0) >0.\] Since $\neg \A$ is also a component property, we may assume without loss of generality that the former occurs.
Conditioned on the events $\{\rho \in \core(\F)\}, \{(G,\rho,\F)\in\A\}$ and $\sP_0$, the event $\sE^n$ is the event that two independent random walks from $\rho$ stay within the components of $\F \setminus \rho$ that do not contain $e_0^-$ or $e_0^+$. This occurs with positive probability since every infinite component of $T_\F(\rho)\setminus \rho$ is transient by \cref{T:everyFUSFtreeistransient} and \cref{nobranches}, concluding the proof.  \end{proof}

\section{The FUSF is either connected or has infinitely many components}\label{S:1infty}

\ca{Our goal in this section is to prove Theorems \ref{1inftycomponents} and \ref{T:Longrangedisorder}.} Let $(G,\rho)$ be a unimodular random rooted network with $\E[c(\rho)]<\infty$ and let $\F$ be a sample of $\FUSF_G$. We may assume that $(G,\rho)$ is ergodic, otherwise we take an ergodic decomposition. We may also assume that $\FUSF_G \neq \WUSF_G$ a.s., since otherwise the result follows from \cite{BLPS}.

The following is an adaptation of~\cite[Lemma~4.2]{LS99} \ca{from the unimodular transitive graph setting} \ct{to our setting.}
\ct{We omit the proof, which is very similar to that of~\cite{LS99}.}

\begin{lemma}[Component Frequencies]\label{L:frequencies} Let $(G,\rho)$ be an ergodic unimodular random rooted network with $\E[c(\rho)]<\infty$ and let $\F$ be a sample of $\FUSF_G$. Conditional on $(G,\rho)$, let $P_v$ denote the law of a random walk $\langle X_n \rangle_{n\geq0}$ on $G$ started at $v$ for each vertex $v$ of $G$, independent of $\F$.
Then there exists a measurable function $\mathrm{Freq}:\cG_\bullet^{\{0,1\}}\to[0,1]$
such that for every vertex $v$ of $G$ and every component $T$ of~$\F$,
\[ \lim_N \frac{1}{N}\sum_{n=1}^N\mathbbm{1}(X_n \in T) = \mathrm{Freq}(G,\rho,T) \quad \text{$P_v$-a.s.}\]
\end{lemma}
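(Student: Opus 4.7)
I would follow the proof strategy of \cite[Lemma 4.2]{LS99}, adapting it to the unimodular random rooted network setting via the biasing of \cref{P:unimodularreversible}.

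First, bias $(G,\rho,\F)$ by $c(\rho)$ to obtain a reversible random rooted marked network; this remains ergodic by the tail-triviality of $\FUSF_G$ together with the ergodicity of $(G,\rho)$ (\cref{S:ergodicity}). Adjoin a bi-infinite simple random walk $\langle X_n\rangle_{n\in\Z}$ on $G$ with $X_0=\rho$ and, conditional on $(G,\rho)$, independent of $\F$. Analogously to the argument leading to \eqref{Eq:reversibleforest0}, reversibility implies that the sequence $\langle(G,X_n,\F)\rangle_{n\in\Z}$ is stationary in $\Gb$, and ergodicity of the biased marked network then yields ergodicity of this sequence.

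The core of the argument is to produce, from this stationarity, a visit frequency attached to a \emph{specific} component of $\F$ rather than merely to a ``type'' of component. To this end, I would work in the doubly-rooted setting and consider the measurable functional $\psi:\Gg^{\{0,1\}}\to\{0,1\}$ defined by $\psi(G,u,v,\omega)=\mathbbm{1}(v\in K_\omega(u))$. Combining Birkhoff's theorem applied to the stationary sequence above with a quenched ratio-ergodic argument (available because the walk is reversible on each fixed environment $(G,\F)$), one obtains the a.s.~convergence
\[\frac{1}{N}\sum_{n=1}^{N}\mathbbm{1}\bigl(X_n\in T_\F(\rho)\bigr)\longrightarrow \mathrm{Freq}\bigl(G,\rho,T_\F(\rho)\bigr)\]
for some measurable $\mathrm{Freq}:\Gb\to[0,1]$. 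A countable-algebra approximation then allows one to make this convergence simultaneous across the mark space, and so defines $\mathrm{Freq}$ on all of $\Gb$.

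Finally, independence of the starting vertex---the lemma asserts $P_v$-a.s.~convergence for every $v\in V(G)$, with the same limit---follows from the tail triviality of the stationary process together with a coupling of random walks started at $\rho$ and at $v$, and unbiasing by $c(\rho)^{-1}$ returns the statement for the original unimodular measure. The principal obstacle is the doubly-rooted step: the process $\langle(G,\rho,X_n,\F)\rangle_n$ is not itself stationary in the marginal of $X_n$ (the walk escapes when $G$ is transient), so Birkhoff cannot be applied directly to it. The resolution, exactly as in \cite{LS99}, is to pair a quenched ergodic argument on each fixed environment with the annealed stationarity of the ``environment-viewed-from-the-walker'' sequence; measurability of $\mathrm{Freq}$ across environments then follows by standard approximation.
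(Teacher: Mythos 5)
Your high-level plan (bias by $c(\rho)$ to a reversible marked network, note that $\langle (G,X_n,\F)\rangle_{n\in\Z}$ is stationary and ergodic, then follow \cite{LS99}) is the right starting point and is exactly what the paper does. However, the central step of the argument -- establishing that the visit frequency to each \emph{fixed} component exists -- is not correctly identified, and the device you invoke to get past it does not work as stated.

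You correctly diagnose the obstruction: the indicator $\mathbbm{1}(X_n\in T_\F(\rho))$ is not a measurable function of the state $(G,X_n,\F)$ alone, because the singly-rooted chain has ``forgotten'' where $\rho$ is, while the doubly-rooted sequence $(G,\rho,X_n,\F)$ is not stationary. But the resolution you propose -- a ``quenched ratio-ergodic argument (available because the walk is reversible on each fixed environment)'' paired with Birkhoff -- is not a theorem that exists in the form you need. On a fixed transient environment the random walk has no invariant probability measure, the $\sigma$-finite stationary measure (counting measure weighted by $c$) gives a totally dissipative shift, and Hopf's ratio ergodic theorem does not apply; nor is it clear what the two Birkhoff sums in the putative ratio would be. The actual mechanism in \cite{LS99}, and in the paper, is different and more delicate: one applies the \emph{subadditive} ergodic theorem to the functionals $F^j(\langle X_k\rangle_{k=0}^n)$ recording the total number of visits, during time $[0,n]$, to the $j$ components visited most often. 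Subadditivity of $F^j$ gives deterministic limits $\alpha(j)=\lim_n n^{-1}(F^j-F^{j-1})$, and one then shows that for any single component $T$ the time averages $\alpha_0^n(T)$ must eventually lie in a small neighbourhood of the set $\{\alpha(j):j\le j_1\}\cup[0,\eps/3]$; combined with $|\alpha_0^n(T)-\alpha_0^{n+1}(T)|\le 1/(n+1)$ this yields a uniform Cauchy property and hence existence of the limit for every component simultaneously. There is no shortcut via Birkhoff plus a quenched ratio argument, and your write-up does not supply the subadditive/most-visited-component idea that actually carries the proof.

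A second, smaller issue: to conclude that the limit is a measurable function of $(G,\rho,T)$ only (rather than of the extra randomness in the walk), the paper uses reversibility to get $\lim_n\alpha_0^n(T)=\lim_n\alpha_{-n}^0(T)$ a.s.\ and then notes that these two limits are conditionally independent given $(G,\rho,\F)$, forcing the limit to be a conditional constant. Your appeal to ``tail triviality of the stationary process together with a coupling'' gestures at something similar for the \emph{starting-point} independence (which is fine, and is also how the paper dispatches that point), but does not address why the limit is independent of the walk's own randomness for a given starting point. That step requires the two-sided/reversibility argument; please make it explicit if you rewrite the proof.
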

For each subset $W$ of $V$, we refer to $\mathrm{Freq}(W)=\mathrm{Freq}(G,\rho,W)$ as the \textbf{frequency} of $W$.
\newcommand{\commenitout}{
\begin{proof} We bias by $c(\rho)$ and work in the reversible setting.
Let $\langle X_k \rangle_{n\in\Z}$ be a bi-infinite simple random walk started from the root and for each $m<n \in \Z$ let
\[\alpha_m^n(W) = \frac{1}{m-n}\sum_{k=n}^m\mathbbm{1}(X_k \in W)\]
for each set $W\subseteq V(G)$. For each $\alpha\in[0,1]$, let
\[ \mathscr{L}_\alpha = \left\{W\subset V(G) : \lim_{n\to\infty}\alpha_0^n(W)=\alpha \quad P_\rho\text{-a.s.} \right\}\]
and let $\mathscr{L}=\bigcup_{\alpha}\mathscr{L}_\alpha$. We define Freq$(W)=\alpha$ if $W\in\mathscr{L}_\alpha$ and (arbitrarily) define Freq$(W)=0$ for $W \notin \mathscr{L}$. It is easy to see that if $W\in \mathscr{L}$ and $\langle Y_n \rangle_{n\geq 0}$ is a simple random walk started at any vertex $v$ of $G$, then
\[ \lim_N \frac{1}{N}\sum_{n=1}^N\mathbbm{1}(Y_n \in W) = \mathrm{Freq}(W)\]
and so it suffices to verify that every component $T$ of $\F$ is in $\mathscr{L}$ a.s.

\medskip

We first prove that the limit $\lim_{n\to\infty}\alpha_0^n(T)$ exists for every component $T$ of $\F$ a.s. To show this, we prove that the sequences $\alpha_0^n(T)$ are uniformly Cauchy a.s., that is,
\begin{equation}\label{Eq:Cauchy} \lim_{n\to\infty} \text{max}\left\{|\alpha_0^m(T)-\alpha_0^{m'}(T)| : m,m'\geq n,\, T \text{ a component of } \F\right\}=0.\end{equation}
Let $\eps>0$. For each $j$, let $F^j(\langle X_k \rangle_{k=n}^m)$ denote the total number of times that the $j$ components of $\F$ that are visited most often during the time interval $[n,m]$ are visited during that time interval. That is,
\[F^j(\langle X_k \rangle_{k=n}^m) =n\text{max}\left\{ \sum_{i=1}^j\alpha_0^{n-1}(T_i)  : T_1,\ldots,T_j \text{ are distinct components of $\F$}\right\}. \]
For each fixed $j$ and every $m,n$ we have that
\[F^j(\langle X_k \rangle_{k=0}^{m+n}) \leq F^j(\langle X_k \rangle_{k=0}^{n}) + F^j(\langle X_k \rangle_{k=m}^{m+n})   \]
and so the subadditive ergodic theorem implies that
$\lim_{n\to\infty}\frac{1}{n}F^j(\langle X_k \rangle_{k=0}^n)$
exists for every $j$ a.s. Let \[\alpha(j) = \lim_{n\to\infty}\left(\frac{1}{n}F^j(\langle X_k \rangle_{k=0}^n) - \frac{1}{n}F^{j-1}(\langle X_k \rangle_{k=0}^n)\right) \]
for each $j\geq 1$, so that $\alpha(j)$ is decreasing in $j$ and $\sum_{j\geq 1} \alpha(j)\leq 1$. Let $j_1$ be sufficiently large that $\alpha(j)\leq \eps/9$ for all $j\geq j_1$ and let $n_1$ be sufficiently large that
\[\left|\frac{1}{n}F^j(\langle X_k \rangle_{k=0}^n)-\frac{1}{n}F^{j-1}(\langle X_k \rangle_{k=0}^n) -\alpha(j)\right|\leq \frac{1}{9j_1+9}\eps\]
for all $j\leq j_1$ and $n\geq n_1$. Let
\[U = \{x\in[0,1] : \exists\, j\leq j_1 \text{ s.t. } |x-\alpha(j)|<\eps/(9j_1+9)\}\cup[0,\eps/3], \]
and for each $\delta>0$ let $U(\delta)$ denote the set of points at distance at most $\delta$ from $U$. We claim that for every component $T$ and every $n_2\geq n_1$, the set $\{\alpha_0^n(T):n\geq n_2\}$ is contained in some connected component of $U(1/n_2)$. Indeed, let for every component $T$ that is visited by the random walk, and all $n=1,2,\ldots$, $T$ is the $j$th most visited component at time $n$ for some $j$, and consequently there exists some $j$ depending on $n$ such that $\alpha_0^n(T)=\frac{1}{n}F^j(\langle X_k \rangle_{k=0}^n)-\frac{1}{n}F^{j-1}(\langle X_k \rangle_{k=0}^n)$. If $j\leq j_1$ and $n\geq n_1$, it follows that $\alpha_0^n(T)\in U$. If $j\geq j_1$, then the $j_1$th most visited component during the time interval $[0,n]$, say, $T'$, satisfies
\[\alpha_0^n(T)\leq\alpha_0^n(T')\leq \alpha(j_1) +\eps/9 \leq \eps/3.\]
and so $\alpha_0^n(T)\in U$ in this case also.
It follows that $\alpha_0^n(T)\in U$ for all components $T$ and all $n\geq n_1$.
Since $|\alpha_0^{n}(T)-\alpha_0^{n+1}(T)|\leq 1/(n+1)$ for all $n$, we have that the set $\{\alpha_0^n(T):n\geq n_2\}$ is contained in some connected component of $U(1/n_2)$ as claimed. When $n_2>\text{max}\{n_1,9(j_1+1)\}$ the total length of $U(1/n_2)$ is at most $\eps$, which implies that the diameter of each connected component is also at most $\eps$. This proves the claimed uniform Cauchy property \eqref{Eq:Cauchy}.

It remains to prove that the limit $\lim_{n\to\infty}\alpha_0^n(T)$ is an a.s.~constant for each component $T$ conditional on $(G,\rho)$ and $\F$. By reversibility,
\begin{align*}2\text{max}\{|\alpha_0^{2n}(T)-\alpha_0^n(T)|\,: T &\text{ a }\text{component of }\F\}\\
=&\text{max}\{|\alpha_n^{2n}(T)-\alpha_0^n(T)|\,:T\text{ a component of }\F\}\\
 \eqd &\text{max}\{|\alpha_0^{n}(T)-\alpha_{-n}^0(T)|\,:T\text{ a component of }\F\}   \end{align*}
It follows from \eqref{Eq:Cauchy} that
\[\lim_{n\to\infty}\alpha_0^n(T) = \lim_{n\to\infty}\alpha^0_{-n}(T) \text{ a.s.}\]
But $\alpha_0^n(T)$ and $\alpha^0_{-n}(T)$ are independent for each $T$ conditional on $(G,\rho)$ and $\F$, so that the limit
\[ \lim_N \frac{1}{N}\sum_{n=1}^N\mathbbm{1}(X_n \in T) \]
is an almost sure constant given $(G,\rho)$ and $\F$ as claimed.
\end{proof}
}



\begin{lemma}\label{L:nozerofrequencybranches} \ca{Let $(G,\rho)$ be a\ct{n ergodic} unimodular random rooted network with $\E[c(\rho)]<\infty$ such that $\FUSF_G \neq \WUSF_G$ a.s.~and let $\F$ be a sample of $\FUSF_G$. Conditioned on $\F$ let $P_x$ denote the law of a random walk $\langle X_n \rangle_{n\geq0}$ on $G$ started at $x$, independent of $\F$. Assume that with positive probability there exist a component of $\F$ with positive frequency. Then on this event, we a.s.~have that for every vertex $u$ of $G$ such that $\mathrm{Freq}(T_\F(u))>0$ and every edge $e \in T_\F(u)$ such that $\F\setminus e$ consists of two infinite components $K_1$ and $K_2$,}
\[P_x\Bigg(\limsup_{N\to\infty} \frac{1}{N}\sum_{\ca{n=1}}^{\ca{N}} \mathbbm{1}(X_n\in K_i\ca{)}>0 \,\Bigg|\, (G,\rho,\F)\Bigg)>0.\]
for both $i=1,2$ and every vertex $x\in G$.
\end{lemma}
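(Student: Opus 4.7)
The plan is a proof by contradiction, following the broad strategy of Lyons--Schramm~\cite{LS99}. Suppose, on a set of positive probability, there exist $u$ and $e$ as in the statement and a vertex $x$ with $P_x\bigl(\limsup_N N^{-1}\sum_{n=1}^N \mathbbm{1}(X_n \in K_1) > 0\bigr) = 0$ (say for $i=1$). A standard coupling argument (the walk from any $y$ hits any fixed $x$ with positive probability and can then be coupled to the walk from $x$) shows that such a failure would hold for every starting vertex simultaneously. By countability of pairs $(u,e)$ relative to the root, and biasing so that $u=\rho$, we may fix an invariantly chosen measurable selection of $(u,e)$ and reduce to showing that the corresponding event has zero probability.

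The first step is to upgrade the $\limsup$ to a true $P_x$-a.s.\ limit that is an a.s.\ constant given $(G,\rho,\F,u,e)$. Bias by $c(\rho)$ as in \cref{P:unimodularreversible} and form the bi-infinite walk $\langle X_n \rangle_{n \in \Z}$, so that $(G,X_n,\F)$ is a stationary and (after ergodic decomposition) ergodic sequence. Mirroring the maximal/subadditive ergodic argument of \cref{L:frequencies} applied to the indicator $\mathbbm{1}(X_n \in K_i)$, I would conclude that for each $i \in \{1,2\}$ the limit $\alpha_i := \lim_N N^{-1}\sum_{n=1}^N \mathbbm{1}(X_n \in K_i)$ exists $P_x$-a.s.\ and is a $P_x$-a.s.\ constant depending only on the (edge-marked) rerooting class. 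Since $\mathbbm{1}(X_n \in T_\F(u)) = \mathbbm{1}(X_n \in K_1) + \mathbbm{1}(X_n \in K_2)$, additivity yields $\alpha_1 + \alpha_2 = \mathrm{Freq}(T_\F(u)) > 0$, so our assumption forces $\alpha_1 = 0$ and $\alpha_2 = \mathrm{Freq}(T_\F(u))$.

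To extract the contradiction I would invoke the structural input provided by \cref{T:everyFUSFtreeistransient} (every component of $\F$ is transient and infinitely-ended) together with \cref{nobranches} (every branch of a transient tree is transient), which in particular shows that both $K_1$ and $K_2$ are transient subtrees. The natural route is then a mass-transport argument that captures the asymmetry forced by $\alpha_1 = 0 < \alpha_2$: roughly, every vertex $v$ that lies in a zero-frequency branch of some edge $e$ sends a unit of mass (with an invariant tie-breaking rule selecting the offending $(u,e)$ among the countably many candidates) to the ``gateway'' endpoint $e^-$ living in the positive-frequency branch; reversibility of the bi-infinite walk then forces the expected mass received per vertex to be finite, contradicting the fact that zero-frequency branches are infinite. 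The main obstacle, where I expect most of the effort to lie, is designing this transport so that it depends only on the rooted configuration and selecting $(u,e)$ invariantly over the countable family of offending pairs. An alternative route, should the transport prove unwieldy, is to use update-tolerance (\cref{updatetol}) to exchange $e$ for a nearby edge $e'$ that turns $K_1$ into a full connected component of the updated forest, and then apply indistinguishability (\cref{mainthmfusf}) together with absolute continuity of the update to conclude that $K_1$ must already have positive frequency under the original law.
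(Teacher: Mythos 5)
Your high-level strategy---a mass-transport argument exploiting that a zero-frequency branch has infinitely many vertices pointing at a single ``gateway''---is the right idea and matches the paper's approach for one of the two cases. However, there is a genuine gap: the set of offending edges need not have a ``last'' edge at all, and you do not address that possibility.

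Concretely, the paper defines, for each vertex $u$ with $\mathrm{Freq}(T_\F(u))>0$, the set $E(u)$ of edges $e\in T_\F(u)$ for which the $u$-side $K_u(e)$ has zero walk-frequency, and shows (via Rayleigh-type arguments as in \cref{nobranches}) that $E(u)$ is either empty, a finite simple path from $u$, or an infinite ray from $u$. Your mass transport --- send unit mass from each $v$ in a zero-frequency branch to the farthest ``gateway'' endpoint --- is well-defined and contradicts the Mass-Transport Principle only when $E(u)$ is a finite path: then every vertex of the infinite set $K_u(e)$, where $e$ is the last edge, transports mass to $e^+$, which therefore receives infinite mass. But when $E(u)$ is an infinite ray there is no last edge and no gateway into the positive-frequency side, so your transport is simply undefined in that regime. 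The paper needs an entirely separate argument there: it first shows that if $E(u)$ is a ray for one $u$ then $E(u')$ is a ray for every $u'$ in the same component, then invokes \cref{mainthmfusf} and ergodicity to conclude that $E(u)$ is a ray for \emph{every} vertex a.s., and finally transports unit mass from each $u$ to the first vertex of $E(u)$ to get $\E[\deg_\F(\rho)]=2$, contradicting the assumption $\FUSF_G\neq\WUSF_G$. So indistinguishability enters in an essential way in that case, and your main route cannot avoid it.

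Two smaller points. First, your claim that the limits $\alpha_i=\lim_N N^{-1}\sum_{n\le N}\mathbbm{1}(X_n\in K_i)$ exist for a \emph{fixed} branch $K_i$ by ``mirroring the subadditive argument of \cref{L:frequencies}'' is not automatic --- that lemma exploits that the entire collection of component indicators is considered at once, and a fixed branch is not itself a rerooting-equivariant quantity; the paper sidesteps this by phrasing everything through $\limsup$, which always exists. Second, your alternative route does not work as stated: FUSF updates $U(\F,e)=\F\cup\{e\}\setminus D(e)$ preserve the number of components, so you cannot ``turn $K_1$ into a full connected component'' by updating. A finite sequence of updates can detach $K_1$ from $K_2$ only by simultaneously re-attaching it to another tree $T'$; the resulting component then has frequency $\mathrm{Freq}(T')+\mathrm{Freq}(K_1)$, and indistinguishability alone does not rule out $\mathrm{Freq}(K_1)=0$. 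That update argument is precisely the one the paper runs \emph{in the other direction} in \cref{1inftycomponents}, using the present lemma as an input; trying to reverse it to prove the lemma is not clearly non-circular.
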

\begin{proof}
We argue similarly to \cref{nobranches}.
For each vertex $u$ of $G$ and each edge $e$ in $T_\F(u)$, let $K_u(e)$ denote the component of $T_\F(u)\setminus e$ containing $u$. For every vertex $u$ such that Freq$(T_\F(u))>0$, let $E(u)$ be the set of edges $e$ in $T_\F(u)$ such that
\[P_x\Bigg(\limsup_{N\to\infty} \frac{1}{N}\sum_{\ca{n=1}}^{\ca{N}} \mathbbm{1}(X_n\in K_u(e))>0 \,\Bigg|\, (G,\rho,\F) \Bigg)=0.\]
for some  vertex $x$ (and hence every vertex).

Similarly to the proof of \cref{nobranches}, we have that for every vertex $u$, if $E(u)$ is non-empty then it is either a finite simple path or a ray in $T_\F(u)$ starting at $u$. Indeed, if $e \in E(u)$ then every edge on the path between $u$ and $e$ is also in $E(u)$, while if $e_1$ and $e_2$ do not lie on a simple path from $u$ in $T_\F(u)$ then the union of $K_u(e_1)$ and $K_u(e_2)$ is all of $T_\F(u)$ hence
\[ \limsup_{N\to\infty} \frac{1}{N}\sum_{n=1}^N \mathbbm{1}(X_n\in K_u(e_1)) + \limsup_{N\to\infty} \frac{1}{N}\sum_{n=1}^N \mathbbm{1}(X_n\in K_u(e_2)) \geq \mathrm{Freq}(T_\F(u)) >0 \]
a.s.~for every starting point $x$ of the random walk $X_n$.

First suppose that with positive probability there exists a vertex  $u\in\core(F)$ such that $E(u)$ is a finite path. Define a mass transport by sending mass one from each vertex $u$ such that $E(u)$ is finite, to the endpoint of the last edge in $E(u)$; \ca{from all other vertices send no mass}. Every vertex sends a mass of at most one while, if $E(u)$ is a finite path for some $u\in\core(F)$ and $e=(e^-,e^+)$ is the last edge of this path, then $e^+$ receives mass one from every vertex in the infinite set $K_u(e)$, contradicting the Mass-Transport Principle.

\ca{Next} suppose that \ca{with positive probability} there exists a vertex $u$ such that $E(u)$ is an infinite ray \ca{emanating} from $u$. In this case, for any other vertex $u'\in T_\F(u)$ \ca{all but finitely many edges $e$ of $E(u)$ satisfy that $u' \in K_u(e)$ and it follows that $E(u')$ is also an infinite ray and} $E(u')\symdif E(u)$ is finite.
\ca{By \cref{mainthmfusf} and ergodicity of $(G,\rho,\F)$, the set $E(u)$ is therefore an infinite ray for every vertex $u$ in $G$ a.s.  Transport unit mass from each vertex $u$ to the first vertex following $u$ in the ray $E(u)$.} Then every vertex $u$ sends unit mass, and receives $\deg_\F(u)-1$ mass.
By the Mass-Transport Principle $\E[\deg_\F(\rho)]=2$, contradicting \cite[Proposition 7.1]{AL07} and the assumption that $\FUSF_G\neq\WUSF_G$ a.s.

\ca{By Theorem \ref{T:everyFUSFtreeistransient} each component of $\F$ has a non-empty core a.s.~and by the above argument $E(u)=\emptyset$ for every core vertex $u$ for which Freq$(T_\F(u))>0$, concluding our proof.} \end{proof}

\begin{proof}[Proof of \cref{1inftycomponents}] Suppose that $\F$ has some finite number $k\geq 2$ of components a.s., which we denote $T_1,\ldots T_k$. Then for every $N$ and $v$
\[ \sum_{i=1}^k\frac{1}{N}\sum_{n=1}^N\mathbbm{1}(X_n \in T_i) = 1 \]
and so $\sum_{i=1}^k \mathrm{Freq}(T_i) = 1$. The frequency of a component is a component property and so, by \cref{mainthmfusf} we must have that Freq$(T_i)=1/k$ for all $i=1,\ldots,k$.

As in \cref{nobranches}, conditional on $(G,\rho,\F)$ there exists a simple path $\langle \gamma_i \rangle_{j\geq0}^m$ in $G$ that does not depend on $\F$ such that, with positive probability
\begin{itemize}\itemsep0em
\item $T_\F(\gamma_j) \neq T_\F(\gamma_0)$ for $j=1,\ldots, m$ and
\ca{\item All the edges $(\gamma_j,\gamma_{j+1})$ for $j=1,\ldots,m-1$ belong to the same component, and}
\item $\gamma_m \in \core(T_\F(\gamma_m))$ and it is the only such vertex in $\gamma$.
\end{itemize}
Denote this event by $\B(\gamma)$.
For each $1 \leq j \leq m$, let $e_j$ be an oriented edge of $G$ with $e_j^-=\gamma_j$ and $e_j^+=\gamma_{j-1}$. Define the forests $\langle \F_j\rangle_{j=0}^m$ recursively by setting $\F_0=\F$ and
\[\F_j = U(\F_{j-1},e_j). \]
Condition on this event $\B(\gamma)$.
  The choice of the edges $\langle e_j \rangle_{j\geq 0}$ is such that
\[ \F_m = \F \cup \{e_1\} \setminus \{d\} \]
for some edge $d$, which we orient so that $d^-=\gamma_m$, that disconnects $T_\F(\gamma_m)$ and whose removal disconnects $T_\F(\gamma_m)$ into two infinite connected components. We denote the component \ca{in $\F$} containing $\gamma_m$ by $K_1$ and the component containing $d^+$ by $K_2$. As sets of vertices, we have that
\[ T_{\F_m}(\gamma_0) = T_\F(\gamma_0)\cup K_1 \quad \text{and} \quad  T_{\F_m}(d^+) = K_2. \]
Thus, by update-tolerance (\cref{updatetol}), we have a.s.~that
\begin{equation}\label{Eq:FreqK2is0} \mathrm{Freq}(T_{\F_m}(\gamma_0)) = \mathrm{Freq}(T_\F(\gamma_0)) + \mathrm{Freq}(K_1) = \frac{1}{k} \, , \end{equation}
and so Freq$(K_1) = 0$ a.s., contradicting \cref{L:nozerofrequencybranches}. \end{proof}

\begin{proof}[Proof of \cref{T:Longrangedisorder}.]
By \cref{1inftycomponents} and the assumption that $\F$ is disconnected a.s., $\F$ has infinitely many components. For every $N$ and $v$, letting $\langle X_n\rangle_{n\geq0}$ be a simple random walk independent of $\F$ started at $v$,
\[ \sum_{\text{$T_{\ca{i}}$ is a component of $\F$}}\frac{1}{N}\sum_{n=1}^N\mathbbm{1}(X_n \in T_i) = 1 \]
and so, taking the limit
\[ \sum_{\text{$T_{\ca{i}}$ a component of $\F$}} \mathrm{Freq}(T_i) \leq 1. \]
By \cref{mainthmfusf} all the component frequencies are equal and so must all equal zero. Thus, for every component $T$ of $\F$,
\[ \lim \frac{1}{N} \sum_{n=1}^N \mathbbm{1}(X_n \in T) \to 0 \quad \text{ a.s.} \, , \]
\ca{which in particular implies that for any $\eps>0$ there exists some $n$ for which $\P(X_n \in T_\F(\rho)) \leq \eps$, hence there exists a vertex $v$ with $\P(v \in T_\F(\rho)) \leq \eps$, as required.}  \end{proof}

\begin{remark} It is possible to remove the application of indistinguishability in the above proofs. If there is a unique component with non-zero frequency a.s., \cref{L:nozerofrequencybranches} allows us to perform updates to create two components of non-zero frequency, contradicting update-tolerance and ergodicity. Otherwise, consider the component of maximal frequency. The maximal component frequency is non-random in an ergodic unimodular random rooted network. However, updating allows us to attach an infinite part of another positive-frequency component to the maximal frequency component, increasing its frequency by \cref{L:nozerofrequencybranches}, contradicting update-tolerance.
\end{remark}
\section{Indistinguishability of WUSF components}\label{S:wusfindist}

\subsection{Indistinguishability of WUSF components by tail properties.}\label{S:wusftail}

Let $G$ be a transient network.
Recall that \textbf{Wilson's algorithm rooted at infinity} \cite{Wilson96,ProppWilson,BLPS} allows us to generate a sample of $\WUSF_G$ by joining together loop-erased random walks on $G$.
Let $\gamma$ be a path in $G$ that is either finite or transient, i.e.\ visits each vertex of $G$ at most finitely many times. The \textbf{loop-erasure} $\textsf{LE}( \gamma )$ of the path $\gamma$ (introduced by Lawler \cite{Lawler80}) is formed by erasing cycles from $\gamma$ chronologically as they are created. More formally, we define $\textsf{LE}(\gamma)_i  = \gamma_{t_i}$ where the times $t_i$ are defined recursively by $t_0 = 0$ and $t_i = 1+ \max \{ t \geq t_{i-1} : \gamma_t = \gamma_{t_{i-1}}\}$.

Wilson's algorithm rooted at infinity generates a sample $\F$ of $\WUSF_G$ as follows.
Let $\{v_j : j \in \mathbb{N} \}$ be an enumeration of the vertices of $G$ and define a sequence of forests $\langle \F_i \rangle_{i\geq 0}$ in $G$ as follows:
\begin{enumerate}
\item Let $\F_0=\emptyset$.
\item Given $\F_j$, start an independent random walk from $v_{j+1}$ stopped if and when it hits the set of vertices already included in $\F_j$.
\item Form the loop-erasure of this random walk path and let $\F_{j+1}$ be the union of $\F_j$ with this loop-erased path.
\item Let $\F=\bigcup_{j\geq 0} \F_j$.
\end{enumerate}
 \ca{It is a fact} that the choice of enumeration does not affect the law of $\F$. We will require the following slight variation on Wilson's algorithm rooted at infinity.

\begin{lemma}\label{L:Wilson}
 Let $W$ be a finite set of vertices in $G$, and let $\{v_j : j \in \mathbb{N}\}$ and $\{ w_j : 1\leq j \leq |W|\}$ be enumerations of $V(G)\setminus W$ and $W$ respectively. Let $\F$ be the spanning forest of $G$ generated as follows.
\begin{enumerate}
\item Let $\F'_0=\emptyset$.
\item Given $\F'_j$, start an independent random walk from $v_{j+1}$ stopped if and when it hits the set of vertices already included in $\F'_j$.
\item Form the loop-erasure of this random walk path and let $\F'_{j+1}$ be the union of $\F'_j$ with this loop-erased path.
\item Let $\F_0=\bigcup_{j\geq 0}\F'_j$.
\item Given $\F_j$, start an independent random walk from $w_{j+1}$ stopped if and when it hits the set of vertices already included in $\F_j$.
\item Let $\F=\bigcup_{j=0}^{|W|}\F_{{j}}$.
\end{enumerate}
Then $\F$ is distributed according to $\WUSF_G$.
\end{lemma}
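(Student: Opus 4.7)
The plan is to derive the statement from the standard theorem that Wilson's algorithm rooted at infinity is insensitive to the enumeration of vertices used. Structurally, the procedure described is Wilson's algorithm rooted at infinity applied with the ``enumeration'' of $V(G)$ obtained by first listing the vertices of $V(G)\setminus W$ as $v_1, v_2, \ldots$ and then appending $w_1, \ldots, w_{|W|}$ at the end. The subtlety is that this ordering has order type $\omega + |W|$ rather than $\omega$, so the standard $\mathbb{N}$-indexed formulation of enumeration-independence does not apply directly, and I would route around this by finite approximation.

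I would fix an exhaustion $\langle V_n \rangle_{n\geq 0}$ of $G$ with $W \subseteq V_n$ for every $n \geq 1$, and let $G_n^*$ be the finite network obtained from $G$ by wiring $V(G)\setminus V_n$ to a single vertex $\partial_n$ and deleting the resulting self-loops. On $G_n^*$, I would run the finite analogue of the procedure: first process the vertices of $V_n \setminus W$ in the order inherited from $v_1, v_2, \ldots$, with walks stopped upon hitting $\partial_n$ or the current forest; then process $w_1, \ldots, w_{|W|}$ in the same fashion. Since this is Wilson's algorithm on the finite network $G_n^*$ rooted at $\partial_n$ with a specific $\mathbb{N}$-enumeration of $V(G_n^*)$, the classical correctness proof of Wilson's algorithm on finite networks (see \cite[\S4]{LP:book} or \cite{ProppWilson}) guarantees that the output is distributed as $\UST_{G_n^*}$.

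Next, I would couple the walks on $G_n^*$ with walks on $G$ in the natural way (so that the walks on $G_n^*$ agree with those on $G$ up until they would leave $V_n$) and take $n\to\infty$. Because $G$ is transient, any walk on $G$ that fails to hit the current forest in finitely many steps almost surely escapes every finite set, so under the coupling the corresponding walk on $G_n^*$ hits $\partial_n$ for all sufficiently large $n$, and the loop-erasures of the two walks agree on any fixed finite subset of $V(G)$. Combined with the fact that $\UST_{G_n^*}$ converges weakly to $\WUSF_G$ by definition, this shows that the output $\F$ of the procedure in the lemma is distributed as $\WUSF_G$.

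The main obstacle is the convergence argument in the second step: the Phase 1 walks may traverse $W$ freely before hitting the current forest, so the coupling must be set up carefully to respect this. However, only countably many walks are involved and each one either terminates in finitely many steps (in which case the $G_n^*$ and $G$ versions coincide for all large $n$) or escapes to infinity (in which case transience forces local agreement), so a standard diagonalization argument suffices to pass the equality in distribution through the limit.
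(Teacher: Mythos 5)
Your proposal is correct in principle but takes a genuinely different, and considerably more laborious, route than the paper. You correctly identify the obstacle (the enumeration $v_1, v_2, \ldots, w_1, \ldots, w_{|W|}$ has order type $\omega + |W|$, so enumeration-insensitivity of Wilson's algorithm rooted at infinity does not apply off-the-shelf), but you then resolve it by finite approximation through $\UST_{G_n^*}$, coupling the walks and passing to the limit using transience. This is essentially a re-derivation of the correctness of Wilson's algorithm rooted at infinity itself (BLPS, Theorem 5.1), adapted to the order type $\omega + |W|$, and the delicate points you gesture at — that loop-erasures of the truncated and full transient walks agree on any fixed finite set for large $n$, that the stopping times propagate correctly through the recursion, and that one can diagonalize over the infinitely many Phase-1 walks — are precisely where the real work lies, and you leave them at the level of a sketch. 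The paper instead side-steps all of this with a short combinatorial observation: since $W$ is finite and $G$ is locally finite, only finitely many $v_j$ are adjacent to $W$; letting $j_0$ be the largest such index, the interleaved sequence $v_1, \ldots, v_{j_0}, w_1, \ldots, w_{|W|}, v_{j_0+1}, \ldots$ is an honest $\mathbb{N}$-enumeration of $V(G)$, to which the standard theorem applies. Moreover, running the two procedures with the \emph{same} collection of independent walks $\{X^v\}$, they produce the \emph{same} forest pathwise: after processing $v_1, \ldots, v_{j_0}$ the forest contains every vertex of $V\setminus W$ adjacent to $W$, so thereafter no walk from $v_j$ with $j > j_0$ can enter $W$ before being stopped, and no walk from any $w_i$ can leave $W$ before being stopped; hence the order in which the remaining steps are interleaved is immaterial. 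This reduces the Lemma to the known theorem in a few lines, whereas your route effectively re-proves that theorem.
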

\begin{proof}
Let $j_0=\max\{j : v_j \text{ is adjacent to $W$}\}$ and consider the enumeration  \[v_1,\ldots,v_{j_0},w_1,\ldots,w_{|W|},v_{j_0+1},\ldots\] of $V(G)$.
Let $\{X^v : v \in V(G)\}$ be a collection of independent random walks, one from each vertex $v$ of $G$. Let $\F$ be generated using the random walks
$\{X^v : v \in V(G)\}$ as above and let $\F'$ be a sample of $\WUSF_G$ generated using Wilson's algorithm rooted at infinity, using the enumeration $v_1,\ldots,v_{j_0},w_1,\ldots,w_{|W|},v_{j_0+1},\ldots$ and the same collection of random walks $\{ X^v \}$.
Then $\F'=\F$, and so $\F'$ has distribution $\WUSF_G$ as claimed.
\end{proof}

\begin{proof}[Proof of \cref{tailproperty}] If $G$ is recurrent, then its WUSF is a.s.~connected and the statement holds trivially, so let us assume that $G$ is a.s.~transient.

For each $r>0$ let $\sG_r$ be the $\sigma$-algebra generated by the random variables $(G,\rho)$ and $\F\cap B_G(\rho,r)$.
Let $\widehat G$ be the network obtained from $G$ by contracting every edge of $\F \cap B_G(\rho,r)$ and deleting every edge of $B_G(\rho,r)\setminus \F$. Conditional on $\sG_r$, the forest $\F$ is distributed as the union of $\F\cap B_G(\rho,r)$ and a sample of $\WUSF_{\widehat G}$ \ca{by the WUSF's spatial Markov property (see \cref{S:USFbackground}).}

\begin{figure}
\centering
\includegraphics[width=0.5\textwidth]{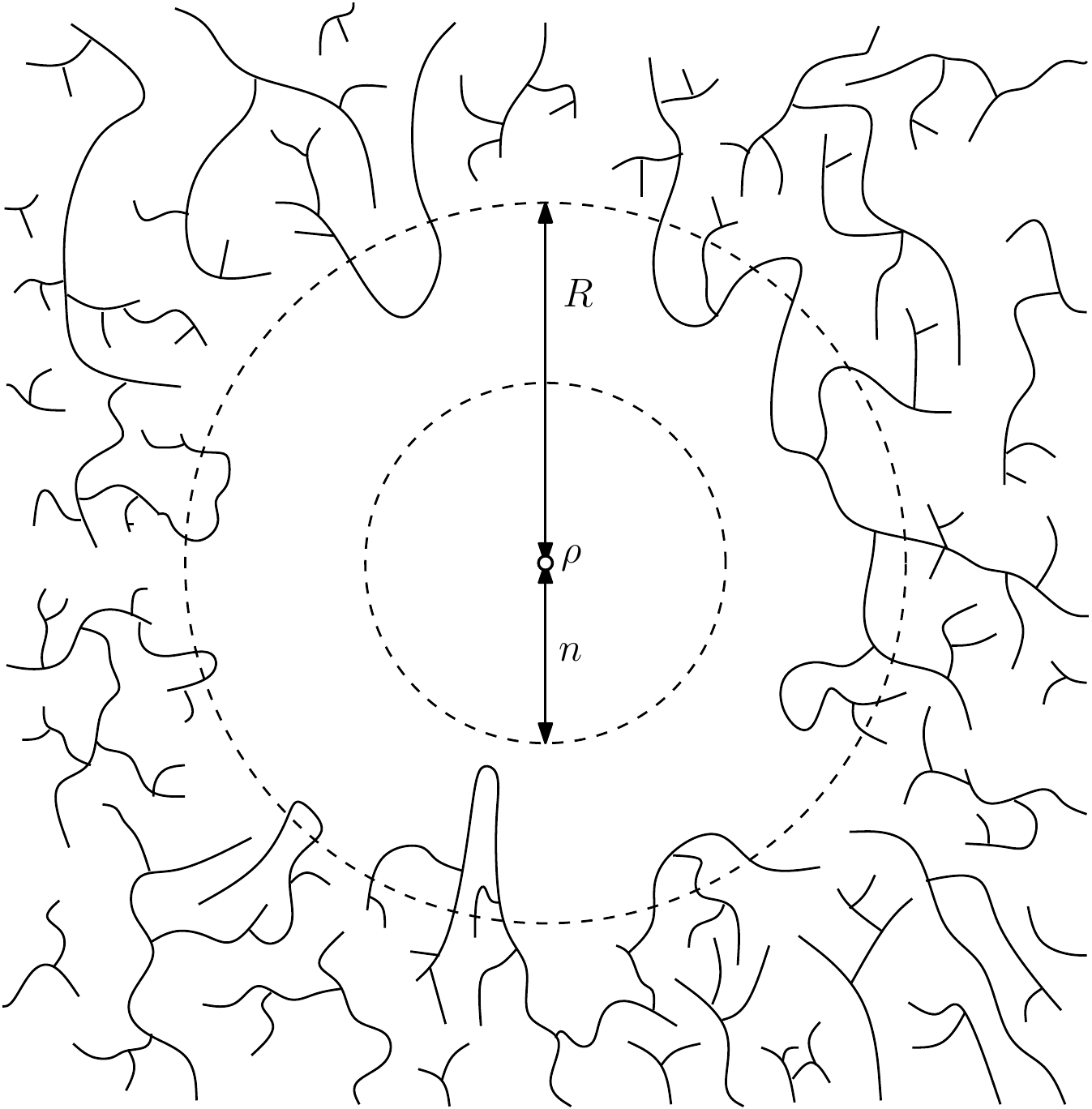}
\caption{Illustration of the forest $\F_R$ and the event $\sC_{R,n}$. The forest $\F_R$ is defined to be the union of the futures in $\F$ of each of the vertices of $G$ outside the ball of radius $R$ about $\rho$. $\sC_{R,n}$ is the event that none of these futures intersect the ball of radius $n$ about $\rho$.}
\label{fig:FR}
\end{figure}

For each integer $R\geq 0$, let $\F_R$ be the subgraph of $\F$ defined to be the union of the futures in $\F$ of every vertex in $G \setminus B_G(\rho,R)$. For each $R$, we can sample $\F_R$ conditioned on $\sG_r$ by running Wilson's algorithm on $\widehat G$ starting from every vertex $\widehat G\setminus B_G(\rho, R)$ (in arbitrary order). A vertex $v$ is contained in $\F_R$ if and only if its past $\past_\F(v)$ intersects $G \setminus B_G(\rho,R)$, and so $\bigcap_{R\geq0} \F_R=\emptyset$ a.s.~by the assumption that $\F$ has one-ended components a.s.
Conditional on $\sG_r$ and $\F_R$, the rest of $\F$ may be sampled by finishing the run of Wilson's algorithm on $\widehat G$ as described in \cref{L:Wilson}.

For each $R$ and $n$ such that $R \geq n$, let $\sC_{R,n}$ be the event that $\F_R\cap B_G(\rho,n)=\emptyset$, so that $\lim_{R \to \infty} \P(\sC_{R,n}) = 1$ for each fixed $n$ (see \cref{fig:FR}).
Let $\langle X_i \rangle_{i\geq0}$ be a random walk on $G$ started at $\rho$ and let $\langle  \widehat X_i \rangle_{i\geq 0}$ be a random walk on $\widehat G$ started at (the equivalence class in $\widehat G$ of) $\rho$. Fix $r \leq n \leq R$ and condition on $\sG_r, \F_R$ and on the event $\sC_{R,n}$.
The definition of $\F_R$ ensures that $\F \setminus \F_R$ is finite and that $T_\F(v)\setminus T_{\F_R}(v)$ is finite for every $v\in\F_R$. Thus, since $\sA$ is a tail property, the event that $T_\F(v)$ has property $\A$ is already determined by $(G,\rho)$ and $\F_R$ for every for vertex $v\in \F_R$. Thus, by \cref{L:Wilson}, the conditional probability that $\rho$ is in an $\A$ cluster equals the conditional probability that the random walk $\langle\widehat{X}_i\rangle_{i\geq0}$ first hits $\F_R$ at a vertex that belongs to an $\A$ cluster, so that
\[\P( T_\F(\rho) \in \A \mid \sG_r,\F_R, \sC_{R,n}) = \P(\langle \widehat X_i\rangle_{i\geq0} \text{ first hits $\F_R$ at an $\sA$ cluster} \mid \sG_r, \F_R, \sC_{R,n}). \]
On the event $\sC_{R,n}$ the walk $\langle \widehat X_i \rangle_{i\geq0}$ must hit $\F_R$ at time $n-r$ or greater, and so for all $j< n-r$ we have
\begin{align*}\P(T_\F(\rho) \in \A \mid \sG_r,\F_R, \sC_{R,n}) &= \P(\langle \widehat X_i\rangle_{i\geq j} \text{ first hits $\F_R$ at an $\sA$ cluster}\mid \sG_r,\F_R,\sC_{R,n})\\
&= \P(T_\F ( \widehat X_j ) \in \A \mid \sG_r,\F_R,\sC_{R,n}) \,
 \end{align*}
where the last equality is due to \cref{L:Wilson}.
That is,
\[ \P(\{T_\F(\rho) \in \A\} \cap \sC_{R,n} \mid \sG_r,\F_R) = \P(\{T_\F(\widehat X_j) \in \A\} \cap \sC_{R,n} \mid \sG_r,\F_R)\]
a.s. for all $j<n-r$.
Taking conditional expectations with respect to $\sG_r$ gives
\[ \P(\{T_\F(\rho) \in \A\} \cap \sC_{R,n} \mid \sG_r) = \P(\{T_\F(\widehat X_j) \in \A\} \cap \sC_{R,n} \mid \sG_r)\]
a.s. for all $j<n-r$. Since the events $\sC_{R,n}$ are increasing in $R$ and $\P(\sC_{R,n}) \to  1$ as $R \to \infty$, taking the limit $R\to\infty$ in the above equality gives that
\be\label{somestationarity} \P( T_\F(\rho) \in \A \mid \sG_r) = \P(T_\F ( \widehat X_j ) \in \A\mid \sG_r) \ee
for all $j < n-r$. This equality holds for all $j$ by taking $n$ to infinity.

\medskip
Let $\tau$ and $\widehat \tau$ be the last times that $\langle X_n\rangle_{n\geq0}$ and $\langle \widehat X_n \rangle_{n\geq0}$ visit $B_G(\rho,r+1)$ respectively.
Then for each vertex $v\in B_G(\rho,r+1)$, the conditional distributions
\begin{align} &\langle X_{\tau+n}\rangle_{n \geq 0}  \text{ conditioned on $X_\tau=v$ and $(G,\rho)$ and }\nonumber\\
&\langle \widehat X_{\widehat \tau +n} \rangle_{n \geq 0} \text{ conditioned on $\widehat X_{\widehat\tau}=v$ and $\sG_r$}\label{equalmeasures}  \end{align}
are equal.

Let $\mathcal{I}$ denote the invariant $\sigma$-algebra of the stationary sequence $\langle(G,X_n,\F)\rangle_{n\geq0}$. The Ergodic Theorem implies that
\[ \frac{1}{N}\sum_{i=1}^N \mathbbm{1}(T_\F(X_{i}) \in \A) \xrightarrow[N \to \infty]{\text{a.s.}} \\Y:=\P(T_\F(\rho) \in \A \mid \mathcal{I}).\]
Moreover, the random variable $Y$ is measurable with respect to the completion of the $\sigma$-algebra generated by $(G,\rho,\F)$: to see this, note that for every $a<b \in [0,1]$, Levy's 0-1 law implies that
\begin{multline*}\P\bigg(\lim_{N\to\infty}\frac{1}{N}\sum_{i=1}^{N}\mathbbm{1}(T_\F(X_{i}) \in \A) \in [a,b] \,\bigg|\, (G,\rho),\F,\langle X_n \rangle_{n=0}^k \bigg)\\
\xrightarrow[k\to\infty]{a.s.}\mathbbm{1}\bigg(\lim_{N\to\infty}\frac{1}{N}\sum_{i=1}^{N}\mathbbm{1}(T_\F(X_{i}) \in \A)\in[a,b]\bigg). \end{multline*}
But
\begin{multline*}\P\bigg(\lim_{N\to\infty}\frac{1}{N}\sum_{i=1}^{N}\mathbbm{1}(T_\F(X_{i}) \in \A) \in [a,b] \,\bigg|\, (G,\rho),\F,\langle X_n \rangle_{n=0}^k \bigg)
\\
= \P\bigg(\lim_{N\to\infty}\frac{1}{N}\sum_{i=1}^{N}\mathbbm{1}(T_\F(X_{k+i}) \in \A) \in [a,b] \,\bigg|\, (G,X_k),\F \bigg)\end{multline*}
and so, by stationarity, $\P\big(Y \!\in [a,b] \,\big|\,(G,\rho,\F)\big)\in\{0,1\}$ a.s. In particular, $Y$ is independent of $X_\tau$ given $(G,\rho)$.

Since $G$ is transient, $\tau$ is finite a.s.~and so
$$ \frac{1}{N}\sum_{i=1}^N \mathbbm{1}(T_\F(X_{\tau+i}) \in \A) \xrightarrow[N \to \infty]{\text{a.s.}} \\ Y.$$
In particular, this a.s.~convergence holds conditioned on $X_\tau = v$ for each $v$ such that $\P(X_\tau = v) > 0$. Since the support of $\widehat X_{\widehat \tau}$ is contained in the support of $X_\tau$ and the conditioned measures in \eqref{equalmeasures} are equal, we have by the above that there exists a random variable $\widehat Y$ such that
\[ \frac{1}{N}\sum_{i=1}^N \mathbbm{1}(T_\F( \widehat X_{\widehat \tau+i}) \in \A) \xrightarrow[N \to \infty]{\text{a.s.}} \widehat Y \, ,\]
and the distribution of $\widehat Y$ given $\sG_r$ and $\widehat X_{\widehat \tau}=v$ is equal to the distribution of $Y$ given $(G,\rho)$ and $X_\tau =v$, so that $\widehat Y$ is in fact independent of $\sG_r$ and $\widehat X_{\widehat \tau}$ given~$(G,\rho)$.
That is, for every $a<b\in [0,1]$,
\[\P(\widehat Y \in [a,b] \mid \sG_r, \widehat X_{\widehat \tau}=v) = \P(Y \in [a,b] \mid (G,\rho), X_\tau=v) = \P(Y \in [a,b] \mid(G,\rho))\]
so that, taking conditional expectations with respect to $(G,\rho)$,
\begin{equation}\label{eq:independence}\P(\widehat Y \in [a,b] \mid (G,\rho)) = \P(Y \in [a,b] \mid (G,\rho)) = \P(\widehat Y\in [a,b]\mid\sG_r, \widehat X_{\widehat \tau}=v) \end{equation}
establishing the independence of $\widehat Y$ from $\sG_r$ and $\widehat X_{\widehat \tau}$ conditional on $(G,\rho)$.

Since $\widehat \tau$ is finite a.s.~we also have that
\[ \frac{1}{N}\sum_{i=1}^N \mathbbm{1}(T_\F( \widehat X_{i}) \in \A) \xrightarrow[N \to \infty]{\text{a.s.}} \widehat Y. \]
Hence, by \eqref{somestationarity} and the conditional Dominated Convergence Theorem,
\begin{align*}
\P( T_\F(\rho) \in \A \mid \sG_r) &= \E\big[\mathbbm{1}(T_\F ( \widehat X_j ) \in \A)\mid \sG_r\big]\\
&= \E\!\Bigg[\frac{1}{N}\sum_{j=1}^N\mathbbm{1}(T_\F ( \widehat X_j ) \in \A) \, \Bigg| \, \sG_r\Bigg]
 \xrightarrow[N \to \infty]{a.s.} \E\!\left[\widehat Y\,\middle|\, \sG_r \right] = \E\left[\widehat Y\,\middle|\, (G,\rho)\right]. \end{align*}
It follows by similar reasoning to \eqref{eq:independence} that the event $\{T_\F(\rho) \in \sA\}$ is independent of $\F\cap B_G(\rho,r)$ conditional on $(G,\rho)$ for every $r$.
It follows that $\WUSF_G(T_\F(\rho) \in \sA) \in \{0,1\}$ a.s., and hence $\WUSF_G( T_\F(v)\in \sA) \in \{0,1\}$ for every vertex $v$ of $G$ a.s.~by stationarity. But, given $(G,\rho)$, every vertex $v$ of $G$ has positive probability of being in the same component of $\F$ as $\rho$, and so we must have that the probabilities
 \[\WUSF_G(T_\F(v)\in \sA)=\WUSF_G(T_\F(\rho) \in \sA)\in \{0,1\}\]
 agree for every vertex $v$ of $G$ a.s., so that either every component of $\F$ has type $\sA$ a.s. conditional on $(G,\rho)$ or every component of $\F$ does not have type $\sA$ a.s. conditional on $(G,\rho)$, completing the proof. \qedhere

\end{proof}


\subsection{Indistinguishability of WUSF components by non-tail properties.}
\subsubsection{Wired Cycle-Breaking for the WUSF}
Let $G$ be an infinite network and let $f$ be a spanning forest of $G$ such that every component of $f$ is infinite and one-ended. For every oriented edge $e$ of $G$, we define the \textbf{update} $U(f,e)$ of $f$ at $e$ as follows:

\begin{quote} If $e$ is a self-loop, or is already contained in $f$, let $U(f,e)=f$. Otherwise:
\begin{enumerate}\item If $e^-$ and $e^+$ are in the same component of $f$, so that $f\cup\{e\}$ contains a cycle, let $d$ be the unique edge of $f$ that is both contained in this cycle and adjacent to $e^-$ and let $U(f,e)=f\cup\{e\}\setminus\{d\}$.
\item Otherwise, let $d$ be the unique edge of $f$ such that $d$ is adjacent to $e^-$ and the component containing $e^-$ in $f\setminus\{d\}$ is finite, and let $U(f,e)=f\cup\{e\}\setminus\{d\}$.
\end{enumerate}
\end{quote}
The following are proved in \cite{H15} (in which an appropriate update rule is also developed for the case that the WUSF has multiply-ended components) \ca{and can also be proved similarly to the proof in \cref{S:cyclebreakingintheFUSF}}.
\begin{prop}\label{Prop:WUSFstationarity} Let $G$ be a network, let $\F$ be a sample of $\WUSF_G$ and suppose that every component of $\F$ is one-ended almost surely. Let $v$ be a fixed vertex of $G$ and let $E$ be an edge chosen from the set $\{e:e^-=v\}$ independently of $\F$ and with probability proportional to its conductance. Then
$U(\F,E)$ and $\F$ have the same distribution.
\end{prop}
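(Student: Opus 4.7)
The plan is to lift the detailed balance computation for the uniform spanning tree on finite networks to the infinite WUSF via the wired exhaustion, using one-endedness to guarantee continuity of the update operation under weak limits. The starting point is the finite-graph analogue proved in Lemma 6 of \cite{H15}: for any finite network $H$ and any fixed vertex $v$, the uniform spanning tree of $H$ is invariant under the update operation. That is, if $T \sim \UST_H$ and $E$ is an edge chosen independently of $T$ from $\{e : e^- = v\}$ with probability proportional to $c(e)$, then $U(T, E) \eqd T$. This is a direct verification of detailed balance for the Markov chain on spanning trees with transition probabilities
\[ p(t_1, t_2) = \frac{1}{c(v)} c\bigl(\{e : e^- = v,\, U(t_1, e) = t_2\}\bigr). \]

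To transfer this to the infinite setting, I would fix an exhaustion $\langle V_n \rangle_{n \geq 0}$ of $G$ with $v \in V_1$, let $G_n^*$ denote the wired network obtained by identifying $V \setminus V_n$, and let $T_n$ be a sample of $\UST_{G_n^*}$, coupled via Skorohod so that $T_n \to \F$ almost surely in the local topology. Let $E$ be as in the statement, independent of all of the $T_n$'s and of $\F$. By the finite-graph invariance, $U(T_n, E) \eqd T_n$, and the right-hand side converges to $\F$ in distribution. It therefore suffices to show that $U(T_n, E) \to U(\F, E)$ in distribution, since then the two distributional limits must agree.

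The main obstacle, and the point at which one-endedness plays its essential role, will be verifying this convergence of the update. Fix an oriented edge $e$ with $e^- = v$. The update $U(T_n, e)$ is obtained from $T_n$ by adding $e$ and removing the unique edge $d_n$ of $T_n$ lying on the $T_n$-path from $e^-$ to $e^+$ in $G_n^*$ and adjacent to $v$. There are two cases. If $e^-$ and $e^+$ lie in the same component of $\F$, then for all sufficiently large $n$ the finite $\F$-path between them is contained in $V_n$ and equals the corresponding $T_n$-path, so $d_n$ stabilises to the unique edge of the cycle in $\F \cup \{e\}$ at $v$, matching case~(1) of the infinite update rule. If $e^-$ and $e^+$ lie in different $\F$-components, then the $T_n$-path from $e^-$ to $e^+$ passes through the wired boundary vertex; its initial segment in $G$ is the future in $T_n$ of $e^-$, which by one-endedness of $\F$-components converges to the unique infinite ray in $\F$ emanating from $v$ inside its component. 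The first edge of this ray is precisely the edge $d$ of case~(2) of the infinite update rule, so $d_n \to d$ almost surely.

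Combining these observations gives $U(T_n, E) \to U(\F, E)$ almost surely along the coupled sequence, so $U(\F, E) \eqd \F$, as required. Without the one-endedness hypothesis, case~(2) of the update rule would be ill-defined, and the limit of $d_n$ could depend in a non-measurable way on how the wiring is performed; this is precisely why \cite{H15} introduces a more elaborate rule to cover the multiply-ended case.
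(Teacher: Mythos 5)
Your proposal is correct and in the spirit of what the paper suggests: pass the detailed-balance invariance of $\UST_{G_n^*}$ from Lemma~6 of \cite{H15} through the wired exhaustion to the weak limit. Where it differs from the FUSF argument in \cref{S:cyclebreakingintheFUSF} is in how convergence of the update is justified. There, the paper first proves an auxiliary lemma establishing joint convergence of $(T_n, D(T_n,e))$ via Kirchhoff's effective-resistance formula and convergence of free unit current flows, which is needed because the direction $D(\F,e)$ is genuinely random even given $\F$. For the WUSF under the one-endedness hypothesis the update is a deterministic function of $\F$, and your Skorohod-coupling argument exploits this to bypass the current machinery entirely: once $T_n$ agrees with $\F$ on a large enough ball, the deleted edge $d_n$ is forced to coincide with the edge $d$ prescribed by the infinite rule. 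This is a valid and arguably cleaner route to the same conclusion.

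One small imprecision in case~2: you assert that the $T_n$-path from $e^-$ to $e^+$ passes through the wired vertex $*$. This is not obvious at finite $n$ and is not actually needed. What suffices, and what your argument effectively delivers, is that the first edge of this path stabilises to $d$: for each edge $d'\neq d$ of $\F$ adjacent to $e^-$, one-endedness makes the piece of $T_\F(e^-)$ on the far side of $d'$ a finite set $K_{d'}$; once $T_n$ matches $\F$ on a ball containing $K_{d'}$ together with all edges of $G$ incident to it, the far side of $d'$ in $T_n\setminus d'$ equals $K_{d'}$, which contains neither $e^+$ nor $*$; hence the $T_n$-path from $e^-$ to $e^+$ cannot begin with $d'$, and since $T_n$ and $\F$ share the same edges at $e^-$ at this stage, it begins with $d$. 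Phrasing the stabilisation this way removes any reliance on the path actually reaching $*$.
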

\begin{corollary}[Update-tolerance for the WUSF]\label{updatetolwusf} Let $G$ be a network and let $\F$ be a sample of $\WUSF_G$. If every component of $\F$ is one-ended almost surely, then for every event $\sA\subset \{0,1\}^{E(G)}$ and every oriented edge $e$ in $G$,
\[ \WUSF_G(\F \in \sA) \geq \frac{c(e)}{c(e^-)}\WUSF_G(U(\F,e)\in\sA).\] \end{corollary}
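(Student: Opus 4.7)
The plan is to mirror exactly the derivation of Corollary \ref{updatetol} from \cref{Lem:FUSfupdatestationarity} in the FUSF setting, now using \cref{Prop:WUSFstationarity} as the analogous stationarity ingredient. That proposition provides precisely the stationarity we need: if $E$ is chosen independently of $\F$ from the oriented edges with tail $v = e^-$ with probability proportional to conductance, then $U(\F,E)$ has the same distribution as $\F$.

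First I would fix the endpoint $v = e^-$, sample such an $E$ independently of $\F$, and combine \cref{Prop:WUSFstationarity} with the conductance-weighted law of $E$ to obtain, for every event $\sA \subseteq \{0,1\}^{E(G)}$,
\[
\WUSF_G(\F \in \sA) = \WUSF_G(U(\F, E) \in \sA) = \frac{1}{c(v)} \sum_{\hat e^- = v} c(\hat e)\, \WUSF_G(U(\F,\hat e) \in \sA).
\]
Next I would discard all but the single $\hat e = e$ term from this non-negative sum, which yields
\[
\WUSF_G(\F \in \sA) \geq \frac{c(e)}{c(e^-)}\, \WUSF_G(U(\F, e) \in \sA),
\]
as desired.

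There is no serious obstacle here: conditional on \cref{Prop:WUSFstationarity} and the one-ended hypothesis (which is what makes the WUSF update rule well defined), the corollary is a one-line consequence of dropping positive terms from a sum. All of the genuine difficulty in this subsection is concentrated in \cref{Prop:WUSFstationarity} itself, where one must verify detailed balance for the Markov chain on one-ended essential spanning forests induced by the update rule; that verification, carried out in \cite{H15}, parallels the calculation referenced in \cref{Lem:FUSfupdatestationarity} for the FUSF and can alternatively be reproduced by the exhaustion-and-limit argument used in \cref{S:cyclebreakingintheFUSF}.
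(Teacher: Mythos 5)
Your proposal matches the paper's proof essentially verbatim: invoke \cref{Prop:WUSFstationarity} to write $\WUSF_G(\F \in \sA)$ as the conductance-weighted average $\frac{1}{c(e^-)}\sum_{\hat e^- = e^-} c(\hat e)\,\WUSF_G(U(\F,\hat e)\in\sA)$, then drop all terms except $\hat e = e$. Nothing to add.
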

\begin{proof} By \cref{Prop:WUSFstationarity},
\begin{align*} \WUSF_G(\F \in \sA) &= \frac{1}{c(e^-)}\sum_{\hat e^- = e^-}c(\hat e) \WUSF_G(U(\F,\hat e) \in \sA)\\
&\geq \frac{c(e)}{c(e^-)}\WUSF_G(U(\F,e)\in\sA).\qedhere\end{align*}
\end{proof}
\subsubsection{Pivotal edges for the WUSF}


Let $G$ be a network, and let $f$ be a spanning forest of $G$ such that every component of $f$ is infinite and one-ended and let $\sA$ be a component property. We call an oriented edge $e$ of $G$ a \textbf{good pivotal edge} for a vertex $v$ of $G$ if either
\begin{enumerate}
\item $e^+ \in T_f(v)$, $e^- \in T_f(v)$, and the type of $T_{U(f,e)}(v)$ is different from the type of $T_f(v)$ (in which case we say $e$ is a \textbf{good internal pivotal} edge for $v$),
\item $e^+ \notin T_f(v)$, $e^- \notin T_f(v)$, and the type of $T_{U(f,e)}(v)$ is different from the type of $T_f(v)$ (in which case we say $e$ is a \textbf{good external pivotal} edge for $v$),
\item $e^+ \in T_f(v)$, $e^- \notin T_f(v)$, and the type of $T_{U(f,e)}(v)$ is different from the type of $T_f(v)$ (in which case we say $e$ is a \textbf{good additive pivotal} edge for $v$), or
\item $e^- \in T_f(v)$, $e^+ \notin T_f(v)$, the component of $v$ in $T_f(v)\setminus \{e^-\}$ is infinite and the type of $T_{U(f,e)}(v)$ is different from the type of $T_f(v)$ (in which case we say $e$ is a \textbf{good subtractive pivotal} edge for $v$).
\end{enumerate}
In particular, $e$ is a good pivotal for some vertex $v$ only if infinitely many vertices change the type of their component when we update from $f$ to $U(f,e)$.

\begin{lemma}\label{specialpivotalswusf} Let $G$ be a network{,} let $\F$ be a sample of $\WUSF_G$ {and let $\sA$ be a component property}. If every component of $\F$ is one-ended a.s., then either
\begin{quote} there exists a good pivotal edge for some vertex $v$ with positive probability \end{quote}
or
\begin{quote} $\sA$ is $\WUSF_G$-equivalent to a tail component property. That is, there exists a tail component property $\sA'$ such that \[\WUSF_G((G,v,\F)\in\sA\symdif\sA')=0.\]
for every vertex $v$ of $G$.\end{quote}
\end{lemma}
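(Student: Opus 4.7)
I would prove the contrapositive: assuming that almost surely there is no good pivotal edge for any vertex, I construct a tail component property $\sA'$ that is $\WUSF_G$-equivalent to $\sA$.

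The first step is to decode the hypothesis. The four cases in the definition of a good pivotal for $v$ are exactly those in which the single-edge update $U(\F,e)$ is a tail modification of $T_\F(v)$: the symmetric difference $\F\symdif U(\F,e)$ always consists of two edges, and these four cases describe precisely when $T_\F(v)\symdif T_{U(\F,e)}(v)$ is also finite (the two swapped edges lie inside $T_\F(v)$, outside it, glue a finite piece of a different tree onto $T_\F(v)$, or detach a finite past from $T_\F(v)$ while $v$ lies in the infinite future). The no-good-pivotal hypothesis therefore says that for $\WUSF_G$-a.e.~$\F$, every edge $e$, and every vertex $v$ for which the update is a tail modification at $v$, we have $(G,v,\F)\in\sA\iff (G,v,U(\F,e))\in\sA$. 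Using update tolerance (\cref{updatetolwusf}) to guarantee that the update image of a $\WUSF_G$-typical configuration is itself $\WUSF_G$-typical, this invariance persists under iterated updates: for $\WUSF_G$-a.e.~$\F$ and every finite chain of updates each forming a tail modification at $v$, the type of $T_\F(v)$ is preserved along the chain.

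The candidate tail property is the tail closure of $\sA$,
\[\sA' := \{(G,v,\omega) \in \Gb : \exists\, \omega' \text{ with } \omega\symdif\omega' \text{ and } K_\omega(v)\symdif K_{\omega'}(v) \text{ both finite and } (G,v,\omega')\in\sA\},\]
possibly refined to ensure invariance under rerooting within the component of $v$. By construction $\sA'$ is a tail component property containing $\sA$; to conclude it suffices to show $\WUSF_G(\sA'\setminus\sA)=0$. Given a $\WUSF_G$-typical $\omega\in\sA'$ with witness $\omega'\in\sA$, tail equivalence forces $\omega$ and $\omega'$ to agree outside a finite ball $B$, and their restrictions to $B$ are spanning trees of a common contracted finite network. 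The classical tree-exchange property then provides a chain $\omega=\omega_0,\omega_1,\ldots,\omega_n=\omega'$ of single-edge swaps within $B$, each of which is a case-1 (cycle-breaking) WUSF update and therefore a tail modification at every vertex. By the invariance established above, the type is preserved along the chain, so $\omega\in\sA$, as required.

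The main obstacle is the combinatorial realization of the tree-exchange chain as a sequence of valid case-1 WUSF updates passing only through one-ended spanning forests. This follows by inducting on $|\omega\symdif\omega'|$: at each step one selects an edge $f\in\omega'\setminus\omega$ whose endpoints are in the same component of the current configuration (which exists because the boundary partitions imposed by the shared exterior agree), performs the case-1 update $U(\omega,f)$ with a carefully chosen orientation to ensure the edge removed from the resulting cycle lies in $\omega\setminus\omega'$, and verifies that one-endedness is preserved because the swap is confined to $B$. Iterating strictly reduces the symmetric difference and yields the desired chain.
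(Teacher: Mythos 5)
Your overall strategy --- defining $\sA'$ as the tail closure of $\sA$, showing $\sA\subseteq\sA'$ trivially, and showing $\sA'\setminus\sA$ is null by moving from $\omega$ to a witness $\omega'$ via a chain of updates that preserve type because no good pivotals exist --- matches the paper's proof. However, the crucial step, realizing the passage from $\omega$ to $\omega'$ as a chain of type-preserving updates, contains a genuine gap.

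You assert that the chain of single-edge swaps consists only of case-1 (cycle-breaking) updates and is therefore ``a tail modification at every vertex.'' Both parts of this claim fail in general. First, two one-ended essential spanning forests $\omega$ and $\omega'$ agreeing outside a finite ball $B$ need \emph{not} have the same component structure: one can, for example, take an edge $e$ inside $B$ connecting two trees $T_1,T_2$ of $\omega$ and an edge $d$ on the resulting bi-infinite path so that $\omega'=\omega\cup\{e\}\setminus\{d\}$ is a valid one-ended forest that re-partitions $T_1\cup T_2$ into two different one-ended trees. Going from such an $\omega$ to $\omega'$ necessarily passes through a configuration with a two-ended component and therefore requires a case-2 update, not case-1. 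Second, and more seriously, a case-2 update where $e^-\in T_\F(v)$, $e^+\notin T_\F(v)$, and $v$ lies in the finite past of $e^-$ detaches $v$ along with a finite set and grafts it onto a different infinite tree, changing $T(v)$ by an infinite symmetric difference --- this is precisely the case the paper's definition of ``good subtractive pivotal'' excludes, because it is not a tail modification at $v$. The hypothesis only says there is no \emph{good} pivotal, so you cannot invoke type-invariance for such an update directly.

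The paper's proof confronts exactly this difficulty and spends most of its length on it: the induction statement tracks a vertex $u'\in T_\F(v)$ that can \emph{move} within the component, and in the hardest case (a two-ended component with one endpoint of $h$ in $T_\F(u)$) it carefully selects both the vertex $u'$ (far enough along the ray from $u$ so that the rest of its ray lies in $f$) and the removed edge $g$ so that every intermediate update keeps $u'$ in an infinite component, guaranteeing each step is witnessed by a good (external or subtractive) pivotal and hence preserves type. Your proposal neither tracks a moving reference vertex nor addresses how to orient the updates so that the vertex of interest is never swept into a detached finite piece. Without this, the claim that the type is preserved along the chain is unjustified. Additionally, the assertion that $\omega|_B$ and $\omega'|_B$ are ``spanning trees of a common contracted finite network'' needs care: after contracting the exterior components of the forest, the restrictions are spanning forests that may have different numbers of components, so the classical spanning-tree exchange property does not apply directly; an argument along the lines of the paper's case analysis is still needed.
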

\begin{proof}
Suppose that no good pivotal edges exist a.s.~and let $\sA'\subseteq \cG_\bullet^{(0,\infty)\times\{0,1\}}$ be the component property
\[ \sA'=\left\{(G,v,\omega):\;\text{\begin{minipage}{0.6\textwidth}
There exists a vertex $u \in K_\omega(v)$ and
 a one-ended essential spanning forest $f$ of $G$ such that $(G,u,f)\in~\sA$ and the symmetric differences $\omega\symdif f$ and $K_\omega(u)\symdif K_f(u)$ are finite.
\end{minipage}}\right\} \, .\]
Note that by definition $\sA'$ is a tail component property. Our goal is to show that $\sA$ and $\sA'$ have $\WUSF_G((G,v,\F)\in\sA \symdif\sA')=0$ for every vertex $v$ of $G$. One part of this assertion is easy, indeed, let $\Omega_0 \subset \cG_\bullet^{(0,\infty)\times\{0,1\}}$ be the event $\Omega_0 = \{ (G,v,\omega) : \omega \textrm{ is a one-ended essential spanning forest} \}$ so that $\WUSF_G(\Omega_0)=1$ by assumption. Then $\sA \cap \Omega_0 \subset \sA'$ since one can take $f=\omega$ and $u=v$ in the definition of $\sA'$.

The second part of this assertion is slightly more difficult and requires the use of update-tolerance. Given a one-ended essential spanning forest $\F$ and a finite sequence of oriented edges $\langle e_i \rangle_{i=1}^n$ of $G$ we define $U(\F; e_1, \ldots, e_n)$ recursively by $U(\F; e_1) = U(\F, e_1)$ and
\[ U(\F; e_1, \ldots, e_n ) = U( U(\F; e_1,\ldots, e_{n-1}), e_n ){.} \]
Let $\F$ be a sample of $\WUSF_G$ and let $\Omega_1$ be the event that for any finite sequence of edges $\langle e_i \rangle_{i=1}^n$ the forest $U(\F; e_1, \ldots, e_n)$ has no good pivotal edges.
B\ca{y} update-tolerance and the assumption that $\F$ has no good pivotal edges a.s., $\WUSF_G(\Omega_1)=1$.
Thus, it suffices to show that $\sA' \cap \Omega_0 \cap \Omega_1 \subset \sA$.


Let $(G,v,\F) \in \sA' \cap \Omega_0 \cap \Omega_1$ and let $f$ be a one-ended essential spanning forest such that $\F\symdif f$  and $T_\F(u) \symdif T_f(u)$ are finite and $(G,u,f) \in \A$ for some vertex $u\in T_\F(v)$. We will prove by induction on $|f \setminus \F|$ that there exists a vertex $u'\in T_\F(v)$ with $(G,u\ca{'},f)\in \A$ and a finite sequence of oriented edges $\langle e_i \rangle_{i=1}^n$ of $G$ such that $U(\F; e_1, \ldots, e_n) = f$ and $T_\F(u')$ and $T_{U(\F; e_1, \ldots, e_i)}(u')$  have the same type for every $1 \leq i \leq n$. Since $(G,u',f)\in\sA$ by assumption, this will imply that $(G,v,\F) \in \sA$ as desired.

To initialize the induction assume that $|f \setminus \F|=0$. Then $f \subset \F$ and, since both $\F$ and $f$ are one-ended essential spanning forests, we must have that $\F \subset f$ since any addition of an edge to $f$ creates either a cycle or a two-ended component, so that $\F=f$ and the claim is trivial.

 Next, assume that $|f\setminus \F|>0$ and let $h \in f \setminus \F$. Since $\F$ is a one-ended essential spanning forest, $\F \cup \{h\}$ contains either a cycle or a two-ended component and we can therefore find an edge $g\in \F \setminus f$ such that $\F'=\F \cup \{h\} \setminus \{g\}$ is a one-ended essential spanning forest.
The choice of $g$ is not unique, and will be important in the final case below.

First suppose $\F \cup \{h\}$ contains a cycle. In this case the choice of an edge $g$ as above is not important. The edge $g$ must be contained in this cycle since otherwise the cycle would be contained in $\F'$. Let $e_1, \ldots, e_k$ be an oriented simple path on this cycle so that $e_1=g$ and $e_k=h$. We have that $\F' = U(\F; e_k, e_{k-1},\ldots, e_2)$ by definition of the update operation.
Since none of the forests $U(\F; e_k, \ldots, e_i)$ have any good internal or external pivotal edges, we have that $T_{\F'}(u)$ has the same type as $T_\F(u)$. Lastly, $(G,u,\F)\in\Omega_{\ca{0}}\cap\Omega_{\ca{1}}$ and $|\F' \cap f| < |\F \cap f|$, so that our induction hypothesis provides us with a vertex $u'\in T_{\F'}(u)$ and a sequence of edges $e'_1,\ldots,e'_m$ such that $U(\F; e'_1, \ldots, e'_m) = f$ and $T_\F(u')$ and $T_{U(\F; e_1, \ldots, e_i)}(u')$  have the same type for every $1 \leq i \leq n$. Since this also holds when $u'$ is replaced by any vertex $u''$ in the future of $u'$ in $\F'$, we may take $u''$ such that the above hold and $u''\in T_\F(v)$.
We conclude the induction step by concatenating the two sequences $e'_1,\ldots,e'_m,e_k,\ldots,e_2$.

Now suppose that $\F \cup \{h\}$ contains a two-ended component. Let us first consider the easier case in which $u$ is not contained in this two-ended component, which is the case if and only if neither of the endpoints of $h$ are in $T_\F({u})$. In this case the choice of an edge $g$ as above is not important.
The edge $g$ must be such that the removal of $g$  disconnects the component of $\F\cup\{h\}$ containing $g$ into two infinite connected components.
 We orient $h$ so that its tail is in the component of $g$ in $\F$ and orient $g$ so that its head is in the component of $\F\setminus\{g\}$ containing $h^-$. We then take an oriented simple path in $\F$ from $g^+$ to $h^-$ and append to it the edge $h$. As above, performing the updates from the last edge of the path (that is, $h$) to the first (the edge in the path touching $g^+$) yields $\F'$. Since none of the forests $U(\F; e_k, \ldots, e_i)$ have any good external pivotal edges, we have that $T_{\F'}(u)$ has the same type as $T_\F(u)$. We may now apply our induction hypothesis to $(G,u,\F')$ as before to complete the induction step in this case.

Finally, if $\F \cup \{h\}$ contains a two-ended component and one of the endpoints of $h$ is in $T_\F(u)$. The choice of $g$ is important in this case.
Orient $h$ so that $h^+ \in T_\F(u)$ and consider the unique infinite rays from $h^+$ and $h^-$ in $\F$, denoted $e_1,e_2,\ldots$ and $e_{-1},e_{-2},\ldots$ respectively. Orient the ray $\langle e_i \rangle_{i\geq 1}$ towards infinity and the ray $\langle e_{-i} \rangle _{i\geq 0}$ towards $h^-$ so that, writing $e_0=h$, $\langle e_i \rangle_{i\in \Z}$ is an oriented bi-infinite path in $\F\cup\{h\}$.

Next consider the unique infinite ray from $u$ in $\F$. Since the symmetric difference $T_\F(u) \symdif T_f(u)$ is finite, all but finitely many of the edges in the infinite ray from $u$ in $\F$ must also be contained in the component of $u$ in $f$. Let $u'$ be the first vertex in the infinite ray from $u$ in $\F$ such that $u'$ is contained in the ray from $h^+$ in $\F$ and all of the ray from $u'$ in $\F$ is contained in $f$, so that $u'=e_k^+=e_{k+1}^-$ for some $k\geq 0$.

  Since $f$ is a one-ended essential spanning forest and contains the ray $\langle e_i\rangle_{i\geq {k+1}}$, there exists an edge $e_l$ with $l< k$ such that $e_l \notin f$. By the definition of the update operation, we have that $\F' = U(\F; -e_0, -e_1,\ldots, -e_{l-1})$ if $l>0$ and $\F'=U(\F,e_0,e_{-1},\ldots,e_{l+1})$ if $l<0$. Let $\F_j$ denote either $U(\F; -e_0, -e_1,\ldots, -e_{j})$ or $U(\F,e_0,e_{-1},\ldots,e_{-j})$ for each $j\leq l-1$ as appropriate. In either case, $u'$ is in an infinite connected component of $\F_j\setminus e_{j+1}$ for each $j$ and so, since good pivotal edges do not exist for any of the $\F_j$, the type of $T_{\F'}(u')$ is the same  as the type of $T_\F(u')$. Lastly, we also have that $(G,u',\F)\in\Omega_0\cap\Omega_1$ and $|\F' \cap f| < |\F \cap f|$, and so we may use apply our induction hypothesis to $(G,u',\F')$ as before, completing the proof. \qedhere

\end{proof}

\subsubsection{\ca{Indistinguishability of WUSF components by non-tail properties}}
\ca{
\noindent Our goal in this section is to prove the following, theorem, which in conjunction with \cref{mainthmfusf} completes the proof of \cref{mainthm}.
}
\ca{
\begin{thm} \label{mainthmwusf} Let $(G,\rho)$ be a unimodular random network with $\E[c(\rho)]<\infty$, and let $\F$ be a sample of $\WUSF_G$. Then for every component property $\A$, either every connected component of $\F$ has property $\A$ or none of the connected components of $\F$ have property $\sA$ almost surely.
\end{thm}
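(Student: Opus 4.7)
The plan is to combine the good-pivotal dichotomy of Lemma \ref{specialpivotalswusf} with the indistinguishability-by-tail-properties result (Theorem \ref{tailproperty}) and a Lyons--Schramm-type argument mirroring the proof of Theorem \ref{mainthmfusf}. First I would pass to the ergodic case by ergodic decomposition, justified because $(G,\rho,\F)$ inherits ergodicity from $(G,\rho)$. If $G$ is recurrent a.s., then $\WUSF_G=\FUSF_G$ is connected a.s.\ and the statement is vacuous, so it suffices to treat the transient case. There Theorem \ref{oneend} guarantees that every component of $\F$ is one-ended a.s., so that both update-tolerance (Corollary \ref{updatetolwusf}) and Lemma \ref{specialpivotalswusf} apply.

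Now apply Lemma \ref{specialpivotalswusf} to split into two cases. In the first, $\sA$ is $\WUSF_G$-equivalent to a tail component property $\sA'$; Theorem \ref{tailproperty} applied to $\sA'$ then gives indistinguishability of the components with respect to $\sA'$, and this transfers to $\sA$ because the events $\{T_\F(v)\in\sA\}$ and $\{T_\F(v)\in\sA'\}$ differ by a $\WUSF_G$-null set for every vertex $v$. In the second case, good pivotal edges exist for some vertex with positive probability, and I would adapt the argument used to prove Theorem \ref{mainthmfusf}: bias $(G,\rho,\F)$ by $c_\F(\rho)$, sample a bi-infinite random walk $\langle X_n\rangle_{n\in\Z}$ on $\F$ started at $\rho$, uniformly random oriented edges $e_n$ within distance $r$ of $X_n$ in $G$, and independent updates $\{U(\F,e):e\in E(G)\}$. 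Choose a cylinder event $\sA'$ measurable with respect to $(G,\rho)$ and $\F\cap B_G(\rho,R)$ approximating $\sA$ to within $\eps$, and define events $\sE_R^n$ requiring that the bi-infinite trace $\{X_{n+k}\}_{k\in\Z}$ be disjoint from the (finite) symmetric difference between the components of $\F$ and $U(\F,e_n)$ at the endpoints of $e_n$. Update-tolerance allows us to swap $\F$ and $U(\F,e_n)$ inside probability estimates at a cost of a factor $c(e_n)/c(e_n^-)$; combining this with the approximation of $\sA$ by $\sA'$ and with stationarity of $\langle (G,\F,\langle X_{n+k}\rangle_{k\in\Z})\rangle_{n\in\Z}$ under the reversible biasing then produces a contradiction with the positive probability of coexistence of $\sA$- and $\neg\sA$-type components unless indistinguishability holds.

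The main obstacle is handling the four kinds of good pivotal edges (internal, external, additive, subtractive) uniformly. The subtractive case is the most delicate: removing an edge from the one-ended tree $T_\F(v)$ splits it into two infinite pieces, and one must ensure that the bi-infinite walk $\langle X_n\rangle$ stays in the piece still containing $v$, which is exactly what the definition of ``good'' guarantees for the chosen $v$. Because WUSF components are recurrent a.s., the walk does not simply escape to infinity as in the FUSF case; however, $T_\F(\rho)$ is infinite and locally finite, so $X_n$ leaves any finite subset of $G$ infinitely often, which is all that is needed to select $n$ with $e_n$ outside $B_G(\rho,R)$ and to make ${\widehat\P}(\sE^n\setminus\sE^n_R)$ small. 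Together with the observation (already used in Lemma \ref{specialpivotalswusf}) that after a ``good'' update the symmetric difference of components at $X_n$ is finite, this allows the Lyons--Schramm-style mass-balance to be run exactly as in the FUSF case, completing the proof.
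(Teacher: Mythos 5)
Your outline for the tail case (ergodic decomposition, Theorem \ref{oneend} for one-endedness, Lemma \ref{specialpivotalswusf} to split into the tail-equivalent vs.\ good-pivotal dichotomy, and Theorem \ref{tailproperty} to dispatch the tail-equivalent case) matches the paper exactly and is fine. The gap is in the non-tail case, where you propose to rerun the FUSF argument with a bi-infinite random walk $\langle X_n\rangle_{n\in\Z}$ on $\F$ started at $\rho$ under the $c_\F(\rho)$-biasing. That mechanism cannot work here. In the FUSF proof the event $\sE^n_e$ requires the \emph{entire} bi-infinite trace $\{X_{n+k}\}_{k\in\Z}$ to be disjoint from the components of $\F\setminus X_n$ containing $e^-$ and $e^+$, and the proof that this has positive probability uses that $T_\F(\rho)$ is infinitely-ended and that every infinite component of $T_\F(\rho)\setminus\rho$ is transient (via \cref{T:everyFUSFtreeistransient} and \cref{nobranches}). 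For the WUSF the components are one-ended (\cref{oneend}) and recurrent (Morris), so removing $X_n$ leaves at most one infinite branch, which the walk must revisit infinitely often; the analogue of $\sE^n_e$ would have probability zero. Your remark that ``$X_n$ leaves any finite subset of $G$ infinitely often'' only gives excursions far from $\rho$, not a trace permanently avoiding a neighbourhood of $e_n$, and the latter is what is needed to make the cylinder approximation $\sA'$ see $\F$ and $U(\F,e_n)$ identically.

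The paper avoids this by discarding the random walk entirely in the non-tail WUSF case. Instead it attaches i.i.d.\ uniform labels $\theta(e)$ to the edges, forms Bernoulli$(1-1/(n+1))$ bond percolations $\omega_n$, and lets $v_n=v_n(\rho)$ be uniform on the (a.s.\ finite, since $\F$ is one-ended) cluster of $\rho$ in $\F\cap\omega_n$, with $e_n$ then chosen uniformly near $v_n$. Reversibility of the walk is replaced by the mass-transport identity $(G,\rho,v_n,\F,\theta)\eqd(G,v_n,\rho,\F,\theta)$ (\cref{samedist}), and the comparison of probabilities before and after an update is replaced by the quantitative estimate of \cref{L:choosingedgeswithpercolation} together with the notion of $\delta$-update-friendly edges. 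Finally, the guarantee that $\rho$ and $v_n$ end up in the same component of $U(\F,e_n)$ when $e_n$ is a good pivotal for $v_n$ is obtained not from transience but from $\widehat\P(\rho\in\mathrm{past}_\F(v_n))\to 0$, which again rests on one-endedness and on $v_n$ being uniform over a growing finite cluster. These three substitutions are the content that is missing from your proposal; without them the non-tail case does not go through.
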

}
\begin{proof}[Proof of \cref{mainthmwusf}]

We may assume that $G$ is transient, since otherwise $\F$ is connected a.s.~and the claim is trivial.
Since $(G,\rho)$ becomes reversible when biased by $c(\rho)$, \cref{tailproperty}
implies that the components of $\F$ are indistinguishable by tail properties (and therefore also by properties equivalent to tail properties), so that we may assume from now on that $\sA$ is not equivalent to a tail property. In this case, \cref{specialpivotalswusf} implies that good pivotal edges exist for $\rho$ with positive probability. Without loss of generality, we may assume further that, with positive probability, $T_\F(\rho)$ has property $\sA$ and there exists a good pivotal edge for $\rho$: if not, replace $\sA$ with $\neg\sA$.
In this case, there exist a natural numbers $r$ such that, with positive probability $T_\F(\rho)$ has property $\sA$ and there exists a good pivotal edge $e$ for $\rho$ at distance at most $r$ from $\rho$ in $G$.

Let $\{\theta(e) : e \in E\}$ be i.i.d.~uniform $[0,1]$ random variables indexed by the edges of $G$, and let $\langle\omega_n\rangle_{n\geq1}$ be Bernoulli $(1-1/(n+1))$-bond percolations on $G$ defined by setting $\omega_n(e)=1$ if and only if $\theta(e)\geq 1-1/(n+1)$. By \cref{oneend}, every connected component of $\F$ is one-ended a.s.~and so every component of $\F \cap \omega_n$ is finite for every $n$ a.s.
Given $(G,\rho,\F,\theta)$, for each vertex $u$ of $G$ let $v_n(u)$ be a vertex chosen uniformly at random from the cluster of $u$ in $\F \cap \omega_n$ and let $e_n(u)$ be an oriented edge chosen uniformly from the ball of radius $r$ about $v_n(u)$ in $G$, where $(v_n(u),e_n(u))$ and $(v_{n'}(u'),e_{m'}(u'))$ are taken to be independent conditional on $(G,\rho,\F,\theta)$ if $n'\neq n$ or $u'\neq u$. We write $v_n=v_n(\rho), e_n=e_n(\rho)$ and let $\widehat\P$ denote the joint law of $(G,\rho,\F,\theta,\langle (v_n(u),e_n(u)) : u \in V \rangle_{n\geq 1})$.
The following is a special case of a standard fact about unimodular random rooted networks.
\begin{lemma}\label{samedist} $(G,\rho,v_n,\F,\theta)$ and $(G,v_n,\rho,\F,\theta)$ have the same distribution.
\end{lemma}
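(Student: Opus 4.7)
The plan is to deduce the identity in distribution directly from the mass-transport principle applied to $(G,\rho,\F,\theta)$. First I would observe that this marked random rooted network is unimodular: $(G,\rho,\F)$ is unimodular by the general construction in \cref{S:unimodular}, and appending the independent i.i.d.\ uniform $[0,1]$ edge marks $\theta$ preserves unimodularity (this is standard; one may verify it by conditioning on $\theta$ and applying the MTP for $(G,\rho,\F)$ to the resulting transport). I would also use crucially that every component of $\F$ is one-ended a.s.\ by \cref{oneend}, which implies that $C_n(u)$ (the cluster of $u$ in $\F\cap\omega_n$) is finite a.s.\ for every vertex $u$ and every $n$; this makes $1/|C_n(u)|$ a well-defined intrinsic quantity.

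For an arbitrary bounded measurable function $f$ on the space of (isomorphism classes of) marked doubly-rooted networks, I would apply the mass-transport principle with the transport
\[ F(G,u,v,\F,\theta) = f(G,u,v,\F,\theta)\cdot \frac{\mathbbm{1}[v\in C_n(u)]}{|C_n(u)|}. \]
Since $v_n=v_n(\rho)$ is by construction uniform on $C_n(\rho)$ given $(G,\rho,\F,\theta)$, summing with $\rho$ as the sender yields
\[ \sum_{v\in V(G)} F(G,\rho,v,\F,\theta) = \frac{1}{|C_n(\rho)|}\sum_{v\in C_n(\rho)} f(G,\rho,v,\F,\theta) = \widehat\E\!\left[f(G,\rho,v_n,\F,\theta)\,\middle|\, G,\rho,\F,\theta\right]. \]
Summing with $\rho$ as the receiver and using the symmetry $\rho\in C_n(u)\Leftrightarrow u\in C_n(\rho)$ (in which case the two clusters coincide and hence have the same cardinality) yields
\[ \sum_{u\in V(G)} F(G,u,\rho,\F,\theta) = \frac{1}{|C_n(\rho)|}\sum_{u\in C_n(\rho)} f(G,u,\rho,\F,\theta) = \widehat\E\!\left[f(G,v_n,\rho,\F,\theta)\,\middle|\, G,\rho,\F,\theta\right]. \]

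Taking unconditional expectations and invoking the mass-transport principle then gives $\widehat\E[f(G,\rho,v_n,\F,\theta)] = \widehat\E[f(G,v_n,\rho,\F,\theta)]$ for every bounded measurable $f$, which is the desired equality in distribution. There is no substantive obstacle in the argument: the only points to check are the unimodularity of the augmented process $(G,\rho,\F,\theta)$ and the finiteness of each $C_n(u)$, both of which are supplied by the background facts already assembled in the excerpt. This is precisely the sense in which the lemma is a ``special case of a standard fact'' — the uniform reroot within a cluster is the canonical mass-transport symmetrisation.
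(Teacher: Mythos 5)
Your argument is correct and is essentially identical to the paper's proof: the paper defines the same mass transport (with $f$ specialised to an indicator $\mathbbm{1}_\sB$ of an event in the doubly-rooted space), computes expected mass sent and received by the root, and invokes the mass-transport principle. Your additional remarks verifying unimodularity of $(G,\rho,\F,\theta)$ and the a.s.\ finiteness of the clusters $C_n(u)$ are correct and are the background facts the paper relies on implicitly.
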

\begin{proof} Let $\sB\subseteq \cG^{(0,\infty)\times\{0,1\}\times[0,1]}_{\bullet\bullet}$ be an event, and for each vertex $u$ of $G$ let $K_n(u)$ by the connected component of $\omega_n\cap\F$ containing $u$. Define a mass transport by sending mass $1/|K_n(u)|$ from each vertex $u$ to every vertex $v\in K_n(u)$ such that $(G,u,v,\F,\theta)\in \sB$ (it may be that no such vertices exist, in which case $u$ sends no mass).
Then the expected mass sent by the root is
\[\widehat \E\left[\frac{1}{|K_n(\rho)|}\sum_{v\in K_n(\rho)}\mathbbm{1}((G,\rho,v,\F,\theta)\in\sB)\right]
=\widehat\P((G,\rho,v_n,\F,\theta)\in\sB)\]
while the expected mass received by the root is
\[\widehat \E\left[\frac{1}{|K_n(\rho)|}\sum_{v\in K_n(\rho)}\mathbbm{1}((G,v,\rho,\F,\theta)\in\sB)\right]
=\widehat\P((G,v_n,\rho,\F,\theta)\in\sB).\]
We conclude by applying the Mass-Transport Principle.
\end{proof}


We will also require the following simple lemma.

\begin{lemma}\label{L:choosingedgeswithpercolation}
Let $f$ be an essential spanning forest of $G$ such that every component of $f$ is one-ended.\tom{
\begin{enumerate} 
\item For every edge $e$  such that $e\notin f$ but $e^+$ and $e^-$ are in the same component of $f$, let $C(f,e)$ denote the unique cycle contained in $f\cup\{e\}$. Then for every vertex $u$ in $G$,
\begin{multline*} \widehat{\P} (e_n(u) =e \text{ and } C(f,e) \subseteq \omega_1 \mid (G,\rho),\,\F=f)\\ = \widehat{\P} (e_n(u) =e \text{ and } C(f,e) \subseteq \omega_1 \mid (G,\rho),\,\F=U(f,e)) \end{multline*}
for all $n\geq 0$.
\item For every edge $e$ of $G$, there exists $\kappa(f,e)>0$ such that for every vertex $u$ of $G$ for which at least one endpoint of $e$ is not contained in $T_f(u)$ and the component of $u$ in $f\setminus\{e^-\}$ is infinite,
\[ \widehat{\P} ( e_n(u)=e \mid (G,\rho),\, \F = f) \geq \kappa(f,e)\widehat{\P} ( e_n(u)=e \mid (G,\rho),\,\F =U(f,e))\]
for all $n\geq0$.
\end{enumerate}
}
\end{lemma}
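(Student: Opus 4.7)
Plan: I would start by writing the conditional probability explicitly: given $(G,\rho)$, $\F$, and $\theta$ (which determines $\omega_n$), the two-step sampling rule yields
\[\widehat{\P}(e_n(u) = e \mid (G,\rho), \F, \theta) = \Phi(K_n^\F(u)),\qquad \Phi(K) := \frac{1}{|K|}\sum_{v \in K \cap N_r(e)}\frac{1}{|E(B_G(v,r))|},\]
where $K_n^\F(u)$ is the cluster of $u$ in $\F \cap \omega_n$ and $N_r(e) = \{v : e \in E(B_G(v,r))\}$. Since $f \triangle U(f,e) = \{d,e\}$, both clusters can be built from the common core $W := \omega_n \cap (f \setminus \{d\}) = \omega_n \cap (U(f,e) \setminus \{e\})$; writing $K'(v)$ for the cluster of $v$ in $W$, the cluster $K_n^f(u)$ differs from $K'(u)$ only when $\omega_n(d) = 1$ and $d$'s endpoints attach to $K'(u)$, and symmetrically for $K_n^{U(f,e)}(u)$ with $e$ in place of $d$. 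The plan is to compare $\Phi$ on these clusters pointwise in $\omega_n$ and then take expectations over $\theta$.

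For part (1), I would use that $\omega_n$ is a Bernoulli $(1-1/(n+1))$ percolation, so $\omega_1 \subseteq \omega_n$ for every $n \geq 1$. Hence on $\{C(f,e) \subseteq \omega_1\}$ one also has $C(f,e) \subseteq \omega_n$, which means the $f$-path $P := C(f,e) \setminus \{e\}$ is contained in $W$ and both $d, e$ are $\omega_n$-open. Therefore $f \cap \omega_n$ contains the entire cycle path $P$ while $U(f,e) \cap \omega_n$ contains the rerouted path $(P \setminus \{d\}) \cup \{e\}$; both are connected subgraphs spanning the vertex set of $C(f,e)$ and agree off the cycle, so $K_n^f(u) = K_n^{U(f,e)}(u)$ pointwise on this event. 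Integrating against $\theta$ gives part (1).

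For part (2), my aim is the pointwise inequality $\Phi(K_n^f(u)) \geq \kappa(f,e)\, \Phi(K_n^{U(f,e)}(u))$ with $\kappa(f,e) > 0$. Let $M = M(f,e)$ denote the size of the finite piece of $T_f(e^-) \setminus \{d\}$ containing $e^-$ in the non-cycle case (in the cycle case $T_f(e^-) = T_f(e^+)$, so "at least one endpoint of $e$ not in $T_f(u)$" forces $u \notin T_f(e^\pm)$, reducing to case (a) below). I would split into three regions for $u$. (a) If $T_f(u) \cap \{e^-, e^+\} = \emptyset$, then neither $d$ nor $e$ can touch $u$'s cluster, so $K_n^f(u) = K_n^{U(f,e)}(u)$ and $\kappa = 1$ suffices. (b) If $u$ lies in the infinite piece of $T_f(e^-) \setminus \{d\}$, then $e^-, e^+ \notin K'(u)$ forces $K_n^{U(f,e)}(u) = K'(u)$ always, while $K_n^f(u)$ is either $K'(u)$ or $K'(u) \cup K'(e^-)$; using $|K'(e^-)| \leq M$ and the elementary bound $\Phi(K \cup L) \geq |K|\Phi(K)/(|K|+|L|) \geq \Phi(K)/(1+|L|)$, I obtain $\Phi(K_n^f(u)) \geq \Phi(K_n^{U(f,e)}(u))/(1+M)$. (c) If $u \in T_f(e^+)$, the roles reverse: $K_n^f(u) = K'(u)$ always, while $K_n^{U(f,e)}(u) = K'(u) \cup K'(e^-)$ only in the subcase $\omega_n(e) = 1$ and $e^+ \in K'(u)$. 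In that subcase, $e^+ \in K'(u) \cap N_r(e)$ so $\Phi(K'(u)) \geq \phi_{e^+}/|K'(u)|$ with $\phi_{e^+} := 1/|E(B_G(e^+, r))|$, while $\Phi(K'(u) \cup K'(e^-)) \leq |N_r(e)|\,\phi_{\max}/|K'(u)|$, yielding $\Phi(K_n^f(u))/\Phi(K_n^{U(f,e)}(u)) \geq \phi_{e^+}/(|N_r(e)|\phi_{\max})$. Setting $\kappa(f,e)$ equal to the minimum of these constants gives a positive constant depending only on $f$ and $e$.

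The main subtlety will be case (c), where the cluster under $U(f,e)$ can strictly contain the cluster under $f$; the key observation that makes the argument go through is that any such discrepancy forces $e^+$ to already lie in $K'(u)$, providing a guaranteed contribution of at least $\phi_{e^+}/|K'(u)|$ to $\Phi(K'(u))$ that controls the ratio against the universal upper bound $\Phi(K'(u) \cup K'(e^-)) \leq |N_r(e)|\phi_{\max}/|K'(u)|$. Cases (a) and (b) are comparatively routine once the decomposition via $W$ and $K'(v)$ is in place.
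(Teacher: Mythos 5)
Your proof is correct and follows essentially the same route as the paper's. You both begin by writing the conditional probability explicitly as a function of the cluster $K_{\omega_n\cap f}(u)$ (your $\Phi(K)$ is exactly the paper's product $\kappa_1\kappa_2$), then compare this quantity for $f$ and $U(f,e)$ pointwise in $\omega_n$. Part (1) is identical. For part (2), you both control the ``inverse cluster size'' factor via a size comparison $|K_{\omega_n\cap f}(u)|\le|K_{\omega_n\cap U(f,e)}(u)|+\text{(finite constant)}$, and the ``sum over near-$e$ vertices'' factor via a guaranteed-contributor argument. The main difference is that you split the non-trivial case into (b) $e^-\in T_f(u)$, where the update only shrinks the cluster and the size bound with $M=|A|$ alone suffices, and (c) $e^+\in T_f(u)$, where the cluster can grow and the discrepancy forces $e^+\in K'(u)\cap N_r(e)$, which you use to bound the ratio. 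The paper instead treats both subcases in one uniform argument: it uses $|W|$ (the union of all finite components of $f\setminus\{e^+,e^-\}$) for the size bound, and shows that whenever $\kappa_2(u,U(f,e),e;\omega_n)>0$ there exists a vertex $x$ with $d(x,e)\le r$ still connected to $u$ in $(\omega_n\cap U(f,e))\setminus e\subseteq\omega_n\cap f$, giving the lower bound $\kappa_2^-(e)$. Your case analysis is a bit more granular and gives a slightly sharper constant in case (b) (where the paper's $\kappa_2^-/\kappa_2^+$ factor is actually unnecessary since the cluster only grows); the paper's version is more compact. Both arguments are sound, and your key observation in case (c) is precisely the device the paper uses for $\kappa_2$.
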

\begin{proof}
Item \tom{$(1)$} follows immediately from the observation that, under these assumptions, the set of vertices connected to $u$ in $\omega_n\cap f$ and $\omega_n \cap U(f,e)$ are equal on the event that $C(f,e)\subseteq \omega_1$.
We \tom{now} prove item \tom{$(2)$}. If $e^+$ and $e^-$ are in the same component of $f$ \tom{or if $e^+,e^-\notin T_f(u)$} then the claim holds trivially by setting $\kappa(f,e)=1$, so suppose not. \tom{Recall that $K_{\omega_n\cap f}(u)$ is defined to be the connected component of $u$ in $\omega_n \cap f$.} Define
 \[ \kappa_1(u,f;\omega_n) = \frac{1}{|K_{\omega_n\cap f}(u)|} \]
 and \[ \kappa_2(u,f,e;\omega_n) = \sum_{\{v \in K_{\omega_n\cap f}(u) \,:\, d(v,e)\leq r\}}\frac{1}{|\{e' \in E: d(v,e')\leq r\}|}. \]
Then conditional on $(G,\rho)$, $\F=f$, and $\omega_n$, the probability that $e_n(u)=e$ for each oriented edge $e$ of $G$ equals
\[ \kappa_1(u,f;\omega_n)\kappa_2(u,f,e;\omega_n).\]
Let $W$ denote the union of the finite components of $f \setminus \{e^+,e^-\}$. Our assumptions on $e$, $u$ and $f$ imply that $T_{U(f,e)}(u) \bigtriangleup T_{f}(u)$ is contained in $W$, so that
\begin{align*} \kappa_1(u,f; \omega_n)^{-1} \;=\; |K_{\omega_n\cap f}(u)| \;\leq\; |K_{\omega_n\cap U(f,e)}(u)| + |W|
\;=\;\kappa_1(u,U(f,e);\omega_n)^{-1} \asaf{+} |W| \, , \end{align*}
and so
\[ \kappa_1(u,f;\omega_n)\geq \frac{1}{1+|W|}\kappa_1(u,U(f,e);\omega_n),\]
since $\kappa_1(u,U(f,e);\omega_n)\leq 1$. Let
\[ \kappa_2^-(e) = \min\left\{ |\{e' \in E: d(v,e')\leq r\}|^{-1} : v\in V(G), d(v,e)\leq r\right\} >0. \]
Suppose that $\kappa_2(u,U(f,e),e;\omega_n)>0$. Then there is a vertex $x$ in the tree $K_{\omega_n \cap U(f,e)}(u)$ such that $d(x,e)\leq r$ and $x$ is still connected to $u$ in $K_{\omega_n\cap U(f,e)}(u) \setminus e$. This $x$ is therefore also be connected to $u$ in $\omega_n \cap f$, and so
\begin{align*}\kappa_2(u,f,e;\omega_n) \geq |\{e' \in \cha{E}: d(x,e')\leq r\}|^{-1} \geq \kappa_2^-(e) \, , \end{align*}
and thus,
\[\kappa_2(u,f,e;\omega_n) \geq \kappa_2^-(e)\mathbbm{1}\big(\kappa_2(u,U(f,e),e;\omega_n)>0\big).\]
But $\kappa_2(u,U(f,e),e;\omega_n)$ is bounded above by
\[\kappa_2(u,U(f,e),e;\omega_n) \leq \kappa_2^+(e) :=  \sum_{\{v :\, d(v,e)\leq r\}}\frac{1}{|\{e' \in E: d(v,e')\leq r\}|} \]
and so
\[\kappa_2(u,f,e;\omega_n) \geq \frac{\kappa_2^-(e)}{\kappa_2^+(e)}\kappa_2(u,U(f,e),e;\omega_n).\]
We obtain that
\begin{equation}\label{Eq:kappa1}\kappa_1(u,f,e;\omega_n)\kappa_2(u,f,e;\omega_n) \geq \frac{\kappa_2^-(e)}{(1+|W|)\kappa_2^+(e)}\kappa_1(u,U(f,e);\omega_n)\kappa_2(u,U(f,e),e;\omega_n).\end{equation}
The claim follows by setting
\[ \kappa(f,e)=\frac{\kappa_2^-(e)}{(1+|W|)\kappa_2^+(e)}\]
and taking expectations over $\omega_n$ in \eqref{Eq:kappa1}.
\end{proof}





Given $(G,\rho,\F,\theta)$ and a positive $\delta>0$, we say that an oriented edge $e$ of $G$ is $\delta$~\textbf{-update-friendly}~if
\begin{enumerate}
\item $c(e)/c(e^-) \geq \delta$, and
\item $\kappa(\F,e)\geq \delta$, and
\item if $e\notin \F$ but $e^+$ and $e^-$ are in the same component of $\F$, then $C(\F,e) \subseteq \omega_1$.
\end{enumerate}
Note that if $e$ is $\delta$-update-friendly for $(G,\rho,\F,\theta)$ then it is also $\delta$-update-friendly for $(G,\rho,U(\F,e),\theta)$.
By assumption, there exists $\delta>0$ such that with positive probability $T_\F(\rho)$ has property $\sA$ and there exists a good pivotal edge $e$ for $\rho$ at distance at most $r$ from $\rho$ in $G$ such that $e$ is $\delta$-update-friendly.




Conditional on $(G,\rho)$,  for each edge $e$ of $G$ and $n \in \Z$, let $\sE_e^n$ denote the event that $e$ is $\delta$-update-friendly and $e_n=e$. Write $\widehat{\P}_{(G,\rho)}$ for $\widehat{\P}$ conditioned on $(G,\rho)$. Applying part (2) of \cref{L:choosingedgeswithpercolation} if $e^+,e^-$ are both in $T_\F(\rho)$ and part (1) otherwise, we deduce from the definition of $\delta$-update-friendliness that for every event $\B \in \{0,1\}^{E(G)}$ such that $\WUSF_G(\F\in\B) > 0$,
\begin{align}\label{eq:WUSFnontailproof} \widehat{\P}_{(G,\rho)}(\sE^n_e \cap \{\F \in \B\} )  &= \widehat{\P}_{(G,\rho)} (\sE^n_e \mid \F \in \B) \WUSF_G(\F \in \B) \nonumber\\
&\geq \delta \widehat{\P}_{(G,\rho)} (\sE^n_e \mid \{U(\F,e) \in \B\}) \WUSF_G(\F \in \B) \nonumber\\
&= \delta\mathbbm{1}\!\left(\frac{c(e)}{c(e^-)}\geq\delta\right)\frac{\WUSF_G(\F \in \B)}{\WUSF_G(U(\F,e) \in \B)}\nonumber\\ &\hspace{12em}\cdot \widehat{\P}_{(G,\rho)}(\sE^n_e \cap \{U(\F,e) \in \B\}) \nonumber\\
&\geq \delta^2 \widehat{\P}_{(G,\rho)}(\sE^n_e \cap \{U(\F,e) \in \B\}) \, ,
\end{align}
where the last inequality is by update-tolerance (\cref{updatetolwusf}). Update-tolerance also implies that this inequality holds trivially when $\WUSF_G(\F\in\B)=0$.

Fix $\eps>0$, and let $R$ be sufficiently large that there exists an event $\A'$ that is measurable with respect to the $\sigma$-algebra generated by $(G,\rho)$ and $\F \cap B_G(\rho,R)$ and has $\widehat\P((G,\rho,\F)\in\A \symdif \A') \leq \eps$.
Define the disjoint unions
\begin{align*}
\sE ^n := \bigcup_{c(e)/(e^-) \geq \delta} \sE^n_e \, \quad \text{ and } \quad
\sE^n_R := \bigcup_{e^- \notin B_G(\rho,R) \,  , c(e)/c(e^-) \geq \delta} \sE^n_e .
\end{align*}
 Condition on $(G,\rho)$, and let
\[\B =\{ \omega \in \{0,1\}^E : (G,\rho,\omega)\in \A' \setminus \A\}.\]
Summing over \asaf{\eqref{eq:WUSFnontailproof}} with this $\sB$ yields that
\begin{align*} \widehat\P_{(G,\rho)}( \F \in \sB) &\geq {\widehat\P_{(G,\rho)}}( \sE^n_R \cap \{\F \in \sB\}) \\ &\geq \delta^{\asaf{2}} {\widehat\P_{(G,\rho)}}(\sE^n_R \cap \{U(\F,e_n) \in \sB \} ) \,
\end{align*}
and hence, taking expectations,
\[ \widehat\P((G,\rho,\F)\in\sA\asaf{'}\setminus\sA)\geq \delta^{\asaf{2}} {\widehat\P}\big(\sE^n_R \cap \{(G,\rho,\F)\in\sA\asaf{'}\setminus\sA\}\big).\]
By the definition of $\A'$ we have that  \[\sE^n_R\cap\{(G,\rho,U(\F,e_n)) \in \A'\} = \sE^n_R\cap\{(G,\rho,\F) \in \A'\},\]
and so
\begin{multline}\label{Eq:WUSFestimate1}\widehat\P((G,\rho,\F)\in\A' \setminus \A)\\  \geq \delta^{\asaf{2}} {\widehat\P}\Big(\sE^n_R \cap \big\{(G,\rho,\F) \in \A'\big\} \cap \left\{(G,\rho,U(\F,e_n)) \in \neg \A\right\} \Big).\end{multline}

Let $\sP_n$ denote the event that
 $e_n$ is a good pivotal edge for $v_n$. We claim that if $\sP_n$ occurs and $\rho$ is not in the past of $v_n$, then $T_{U(\F,e_n)}(\rho) = T_{U(\F,e_n)}(v_n)$ and $(G,\rho,U(\F,e_n)) \in \neg\sA$. If $e_n$ is a good internal, external or additive pivotal for $v_n$, then clearly $\rho$ and $v_n$ are in the same component of $U(\F,e_n)$, and, since $e_n$ is pivotal for $v_n$ we deduce that $(G,\rho,U(\F,e_n)) \in \neg \sA$.  If $e_n$ is a good subtractive pivotal edge for $v_n$ then the component of $v_n$ in $\F \setminus \{e_n^-,e_n^+\}$ is infinite and, since $\rho$ is not in the past of $v_n$, $\rho$ and $v_n$ must be in the same component of $\F \setminus \{e_n^-,e_n^+\}$. It follows that $\rho$ and $v_n$ are in the same component of $U(\F,e_n)$, and so $U(\F,e_n) \in \neg \sA_\rho$ as before. Combining this with \eqref{Eq:WUSFestimate1}, we have
\begin{align*} \widehat\P\big( (G,\rho,\F) \in \A' \setminus \A\big)  \geq \delta^2  \widehat{\P}\big(\sE^n_R \cap \big\{(G,\rho,\F) \in \A'\big\} \cap \sP_{n}  \cap \big\{ \rho \not \in \textrm{past}_\F(v_n)\big\} \big) \, .
\end{align*}
\begin{lemma} $\widehat \P(\rho \in \mathrm{past}(v_n)) \to 0$ as $n \to \infty$. \end{lemma}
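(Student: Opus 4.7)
My plan is to apply \cref{samedist} to swap the roles of $\rho$ and $v_n$: taking the Borel set $\sB = \{(G,u,v,\omega,\theta) : u \in \past_\omega(v)\}$ in that lemma yields
\[ \widehat{\P}\bigl(\rho \in \past_\F(v_n)\bigr) = \widehat{\P}\bigl(v_n \in \past_\F(\rho)\bigr). \]
This swap is the main conceptual step: while $v_n$ varies with $n$, the past of $\rho$ is a single random set that is almost surely finite, so the problem reduces to showing that $v_n$ escapes every fixed finite set.

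Since $G$ has already been assumed transient, \cref{oneend} implies that every component of $\F$ is one-ended almost surely. In a locally finite one-ended tree, removing any vertex leaves only finitely many components, exactly one of which is infinite, and therefore $\past_\F(\rho)$ is almost surely finite. On the other hand, as $n\to\infty$ the Bernoulli edge density $1-1/(n+1)$ tends to $1$, so (using the coupling given by the uniform variables $\theta$) the random subgraphs $\omega_n$ increase almost surely to the full edge set, and consequently $K_n(\rho)$ increases to $T_\F(\rho)$ with $|K_n(\rho)| \to \infty$ almost surely.

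Since $v_n$ is uniformly distributed on $K_n(\rho)$ conditional on $(G,\rho,\F,\theta)$,
\[ \widehat{\P}\bigl(v_n \in \past_\F(\rho) \,\bigm|\, G,\rho,\F,\theta\bigr) = \frac{|\past_\F(\rho)\cap K_n(\rho)|}{|K_n(\rho)|} \le \frac{|\past_\F(\rho)|}{|K_n(\rho)|} \xrightarrow[n\to\infty]{} 0 \quad \text{a.s.,} \]
and since this conditional probability is bounded by $1$, the dominated convergence theorem yields $\widehat{\P}(v_n \in \past_\F(\rho)) \to 0$, completing the plan. Beyond the swap, no step presents any real difficulty; the argument boils down to the a.s.\ finiteness of the past and the unbounded growth of the cluster $K_n(\rho)$.
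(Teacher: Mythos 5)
Your proof is correct and follows essentially the same route as the paper's: apply \cref{samedist} to swap $\rho$ and $v_n$, observe that $\past_\F(\rho)$ is finite (by one-endedness from \cref{oneend}) while $|K_n(\rho)|\to\infty$ a.s., then conclude the conditional probability tends to $0$ and take expectations. The only difference is that you spell out the coupling/monotonicity of $\omega_n$ and the bounded convergence step explicitly, which the paper leaves implicit.
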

\begin{proof} By \cref{samedist},
\[\widehat \P\big(\rho \in \textrm{past}_\F(v_n)\big) = \widehat \P\big(v_n \in \textrm{past}_\F(\rho)). \]
Observe that past$(\rho)$ is finite, while the size of the component of $\rho$ in $T_\F(\rho)\cap \omega_n$ tends to infinity as $n \to \infty$. Since $v_n$ is defined to be a uniform vertex of the this component, it follows that
\[ \widehat \P(v_n \in \textrm{past}_\F(\rho) \mid (G,\rho, \F, \theta) ) \xrightarrow[n\to\infty]{a.s.} 0\]
and the claim follows by taking expectations.
\end{proof}
Thus, taking $n$ sufficiently large that $\widehat \P(\rho \in \textrm{past}_\F(v_n))<\eps$, we have that
\[ \P( (G,\rho,\F) \in \A' \setminus \A)  \geq \delta^2  \widehat{\P}(\sE^n_R \cap \{(G,\rho,\F) \in \A'\} \cap  \sP_{n} ) - \delta^2  \eps. \] By definition of $\A'$, we then have that
$$ \P((G,\rho,\F) \in \A' \setminus \A) \geq \delta^2  \widehat{\P}(\sE^n_R \cap \{(G,\rho,\F) \in \A\} \cap \sP_{n} ) - 2\delta^2  \eps \, .$$
We can further choose $n$ to be sufficiently large that $\widehat{\P}(\sE^n \setminus \sE^n_R) \leq \eps$, so that
\begin{align} \nonumber \P( (G,\rho,\F) \in \A' \setminus \A) &\geq \delta^2  \widehat{\P}(\sE^n \cap \{(G,\rho,\F) \in \A\} \cap \sP_{n} ) - 3 \delta^2  \eps \\ &= \delta^2 \widehat{\P}(\sE^n \cap \{(G,\asaf{v_n},\F)\in\A\}\cap \sP_n ) - 3 \delta^2  \eps \,\label{eq:WUSFestimatefinal}
\end{align}
where in the second equality we have used the fact that $\A$ is a component property. Observe that, by \cref{samedist}, the probability  $\widehat{\P}(\sE^n \cap \{(G,\asaf{v_n},\F)\in\A\}\cap \sP_n )>0$ does not depend on $n$.
It does not depend on $\eps$ either, and so \eqref{eq:WUSFestimatefinal} contradicts the definition of $\A'$ when $\eps$ is taken to be sufficiently small.
\end{proof}

\section*{Acknowledgements}
 We are grateful to Russ Lyons for many comments, corrections  and improvements to the manuscript, and also to Ander Holroyd and Yuval Peres for useful discussions.  TH thanks Tel Aviv University and both authors thank the Issac Newton Institute, where part of this work was carried out, for their hospitality.  This project is supported by NSERC.
 \footnotesize{
\providecommand{\bysame}{\leavevmode\hbox to3em{\hrulefill}\thinspace}
\providecommand{\MR}{\relax\ifhmode\unskip\space\fi MR }
\providecommand{\MRhref}[2]{%
  \href{http://www.ams.org/mathscinet-getitem?mr=#1}{#2}
}
\providecommand{\href}[2]{#2}

}

\begin{thebibliography}{10}

\bibitem{AizWar06}
Michael Aizenman and Simone Warzel, \emph{The canopy graph and level statistics
  for random operators on trees}, Math. Phys. Anal. Geom. \textbf{9} (2006),
  no.~4, 291--333 (2007). \MR{2329431 (2008f:60071)}

\bibitem{AL07}
David Aldous and Russell Lyons, \emph{Processes on unimodular random networks},
  Electron. J. Probab. \textbf{12} (2007), no. 54, 1454--1508. \MR{2354165
  (2008m:60012)}

\bibitem{unimodular2}
Omer Angel, Tom Hutchcroft, Asaf Nachmias, and Gourab Ray, \emph{A dichotomy
  for random planar maps}, In preparation.

\bibitem{BLPS99}
I.~Benjamini, R.~Lyons, Y.~Peres, and O.~Schramm, \emph{Group-invariant
  percolation on graphs}, Geom. Funct. Anal. \textbf{9} (1999), no.~1, 29--66.
  \MR{1675890 (99m:60149)}

\bibitem{BPP14}
I.~{Benjamini}, E.~{Paquette}, and J.~{Pfeffer}, \emph{{Anchored expansion,
  speed, and the hyperbolic Poisson Voronoi tessellation}}, ArXiv e-prints
  (2014).

\bibitem{BC2011}
Itai Benjamini and Nicolas Curien, \emph{Ergodic theory on stationary random
  graphs}, Electron. J. Probab. \textbf{17} (2012), no. 93, 20. \MR{2994841}

\bibitem{BeKePeSc04}
Itai Benjamini, Harry Kesten, Yuval Peres, and Oded Schramm, \emph{Geometry of
  the uniform spanning forest: transitions in dimensions {$4,8,12,\dots$}},
  Ann. of Math. (2) \textbf{160} (2004), no.~2, 465--491. \MR{2123930
  (2005k:60026)}

\bibitem{BLPS}
Itai Benjamini, Russell Lyons, Yuval Peres, and Oded Schramm, \emph{Uniform
  spanning forests}, Ann. Probab. \textbf{29} (2001), no.~1, 1--65. \MR{1825141
  (2003a:60015)}

\bibitem{BurPe93}
Robert Burton and Robin Pemantle, \emph{Local characteristics, entropy and
  limit theorems for spanning trees and domino tilings via
  transfer-impedances}, Ann. Probab. \textbf{21} (1993), no.~3, 1329--1371.
  \MR{1235419 (94m:60019)}

\bibitem{PSHIT}
Nicolas Curien, \emph{Planar stochastic hyperbolic infinite triangulations},
  arXiv:1401.3297, 2014.

\bibitem{EpMo09}
Inessa Epstein and Nicolas Monod, \emph{Nonunitarizable representations and
  random forests}, Int. Math. Res. Not. IMRN (2009), no.~22, 4336--4353.
  \MR{2552305 (2010j:22007)}

\bibitem{Gab05}
D.~Gaboriau, \emph{Invariant percolation and harmonic {D}irichlet functions},
  Geom. Funct. Anal. \textbf{15} (2005), no.~5, 1004--1051. \MR{2221157
  (2007m:60294)}

\bibitem{Gab10}
Damien Gaboriau, \emph{What is {$\ldots$} cost?}, Notices Amer. Math. Soc.
  \textbf{57} (2010), no.~10, 1295--1296. \MR{2761803 (2012c:37005)}

\bibitem{GabLyons07}
Damien Gaboriau and Russell Lyons, \emph{A measurable-group-theoretic solution
  to von {N}eumann's problem}, Invent. Math. \textbf{177} (2009), no.~3,
  533--540. \MR{2534099 (2011a:37010)}

\bibitem{GrimFKbook}
Geoffrey Grimmett, \emph{The random-cluster model}, Grundlehren der
  Mathematischen Wissenschaften [Fundamental Principles of Mathematical
  Sciences], vol. 333, Springer-Verlag, Berlin, 2006. \MR{2243761
  (2007m:60295)}

\bibitem{Hagg95}
Olle H{\"a}ggstr{\"o}m, \emph{Random-cluster measures and uniform spanning
  trees}, Stochastic Process. Appl. \textbf{59} (1995), no.~2, 267--275.
  \MR{1357655 (97b:60170)}

\bibitem{HebSaCo93}
W.~Hebisch and L.~Saloff-Coste, \emph{Gaussian estimates for {M}arkov chains
  and random walks on groups}, Ann. Probab. \textbf{21} (1993), no.~2,
  673--709. \MR{1217561 (94m:60144)}

\bibitem{HLMPPW}
Alexander~E. Holroyd, Lionel Levine, Karola M{\'e}sz{\'a}ros, Yuval Peres,
  James Propp, and David~B. Wilson, \emph{Chip-firing and rotor-routing on
  directed graphs}, In and out of equilibrium. 2, Progr. Probab., vol.~60,
  Birkh\"auser, Basel, 2008, pp.~331--364. \MR{2477390 (2010f:82066)}

\bibitem{H15}
Tom Hutchcroft, \emph{Wired cycle-breaking dynamics for uniform spanning
  forests}, http://arxiv.org/abs/1504.03928.

\bibitem{JarRed08}
Antal~A. J{\'a}rai and Frank Redig, \emph{Infinite volume limit of the abelian
  sandpile model in dimensions {$d\geq 3$}}, Probab. Theory Related Fields
  \textbf{141} (2008), no.~1-2, 181--212. \MR{2372969 (2009c:60268)}

\bibitem{JarWer14}
Antal~A. J{\'a}rai and Nicol{\'a}s Werning, \emph{Minimal configurations and
  sandpile measures}, J. Theoret. Probab. \textbf{27} (2014), no.~1, 153--167.
  \MR{3174221}

\bibitem{Ken00}
Richard Kenyon, \emph{The asymptotic determinant of the discrete {L}aplacian},
  Acta Math. \textbf{185} (2000), no.~2, 239--286. \MR{1819995 (2002g:82019)}

\bibitem{Kirch1847}
Gustav Kirchhoff, \emph{Ueber die {A}ufl\"osung der {G}leichungen, auf welche
  man bei der {U}ntersuchung der linearen {V}ertheilung galvanischer {S}tr\"ome
  gef\"uhrt wird.}, Ann. Phys. und Chem. (1847), no.~72, 497--508.

\bibitem{Lawler80}
Gregory~F. Lawler, \emph{A self-avoiding random walk}, Duke Math. J.
  \textbf{47} (1980), no.~3, 655--693. \MR{587173 (81j:60081)}

\bibitem{LP:book}
R.~Lyons and Y.~Peres, \emph{Probability on trees and networks}, Cambridge
  University Press, 2015, In preparation. Current version available at
  \hfil\break {\tt http://mypage.iu.edu/\string~rdlyons/}.

\bibitem{Lyons09}
Russell Lyons, \emph{Random complexes and {$l^2$}-{B}etti numbers}, J. Topol.
  Anal. \textbf{1} (2009), no.~2, 153--175. \MR{2541759 (2010k:05130)}

\bibitem{LMS08}
Russell Lyons, Benjamin~J. Morris, and Oded Schramm, \emph{Ends in uniform
  spanning forests}, Electron. J. Probab. \textbf{13} (2008), no. 58,
  1702--1725. \MR{2448128 (2010a:60031)}

\bibitem{LS99}
Russell Lyons and Oded Schramm, \emph{Indistinguishability of percolation
  clusters}, Ann. Probab. \textbf{27} (1999), no.~4, 1809--1836. \MR{1742889
  (2000m:60114)}

\bibitem{MajDhar92}
S.~N. Majumdar and D.~Dhar, \emph{Equivalence between the abelian sandpile
  model and the $q \to 0$ limit of the potts model}, Physica A (1992), no.~185,
  129--145.

\bibitem{Morris03}
Ben Morris, \emph{The components of the wired spanning forest are recurrent},
  Probab. Theory Related Fields \textbf{125} (2003), no.~2, 259--265.
  \MR{1961344 (2004a:60024)}

\bibitem{Pem91}
Robin Pemantle, \emph{Choosing a spanning tree for the integer lattice
  uniformly}, Ann. Probab. \textbf{19} (1991), no.~4, 1559--1574. \MR{1127715
  (92g:60014)}

\bibitem{ProppWilson}
James~Gary Propp and David~Bruce Wilson, \emph{How to get a perfectly random
  sample from a generic {M}arkov chain and generate a random spanning tree of a
  directed graph}, J. Algorithms \textbf{27} (1998), no.~2, 170--217, 7th
  Annual ACM-SIAM Symposium on Discrete Algorithms (Atlanta, GA, 1996).
  \MR{1622393 (99g:60116)}

\bibitem{Timar15}
\'Ad\'am Tim\'ar, \emph{Indistinguishability of components of random spanning
  forests}, http://arxiv.org/abs/1506.01370.

\bibitem{Wilson96}
David~Bruce Wilson, \emph{Generating random spanning trees more quickly than
  the cover time}, Proceedings of the {T}wenty-eighth {A}nnual {ACM}
  {S}ymposium on the {T}heory of {C}omputing ({P}hiladelphia, {PA}, 1996), ACM,
  New York, 1996, pp.~296--303. \MR{1427525}

\end{thebibliography}
\end{document}